\documentclass[12pt,reqno]{amsart}
\usepackage[numbers, square]{natbib}
\usepackage{mathptmx}
\usepackage{amsmath}
\usepackage{amscd}
\usepackage{amssymb}
\usepackage{amsthm}
\usepackage{enumerate}
\usepackage{xspace}
\usepackage[all,tips]{xy}
\usepackage[dvips]{graphicx}
\usepackage{verbatim}
\usepackage{syntonly}
\usepackage{hyperref}
\usepackage{amsmath, amsthm, graphics, amssymb,fullpage,color, epsfig,url}
\usepackage{indentfirst}
\usepackage{color}
\usepackage{mathtools}



\theoremstyle{plain}
\newtheorem{thm}{Theorem}
\newtheorem{lem}{Lemma}
\newtheorem{prop}{Proposition}
\newtheorem{defn}{Definition}

\newtheorem{clm}{Claim}

\theoremstyle{definition}
\newtheorem{rem}{Remark}

\newtheorem*{thm0}{Theorem}
\newtheorem*{prop0}{Proposition}

\newcommand{\disp}{\displaystyle}

\DeclareMathOperator{\di}{div}

\DeclareMathOperator{\tr}{tr}



\newcommand{\eps}{\varepsilon}
\newcommand{\vp}{\varphi}


\newcommand{\al}{\alpha}
\newcommand{\be}{\beta}
\newcommand{\ga}{\gamma}
\newcommand{\de}{\delta}

\newcommand{\Ga}{\Gamma}
\newcommand{\te}{\theta}
\newcommand{\la}{\lambda}
\newcommand{\La}{\Lambda}
\newcommand{\om}{\omega}
\newcommand{\Om}{\Omega}
\newcommand{\si}{\sigma}



\newcommand{\iny}{\infty}
\newcommand{\del}{ \partial}
\newcommand{\su}{\subset}
\newcommand{\LP}{\Delta}
\newcommand{\gr}{\nabla}


\newcommand{\norm}[1]{\left\| #1\right\|}

\newcommand{\innp}[1]{\left< #1 \right>}
\newcommand{\abs}[1]{\left\vert#1\right\vert}
\newcommand{\set}[1]{\left\{#1\right\}}
\newcommand{\brac}[1]{\left[#1\right]}
\newcommand{\pr}[1]{\left( #1 \right) }
\newcommand{\pb}[1]{\left( #1 \right] }

\DeclarePairedDelimiter{\ceil}{\lceil}{\rceil}



\newcommand{\N}{\ensuremath{\mathbb{N}}}

\newcommand{\R}{\ensuremath{\mathbb{R}}}

\numberwithin{equation}{section}
\numberwithin{defn}{section}
\numberwithin{lem}{section}
\numberwithin{cor}{section}

\newcommand{\Keywords}[1]{\par\noindent 
{\small{\bf Keywords\/}: #1}}
\newcommand{\MSC}[1]{\par\noindent 
{\small{\bf Mathematics Subject Classification\/}: #1}}

\date{}

\title{On Landis' conjecture in the plane for some equations \\ with sign-changing potentials}

\author[Davey]{Blair Davey}
\address{Department of Mathematics, City College of New York CUNY, New York, NY 10031, USA}
\email{bdavey@ccny.cuny.edu}
\thanks{Davey is supported in part by the Simons Foundation Grant 430198.}

\date{}

\begin{document}

\maketitle

\begin{abstract}
In this article, we investigate the quantitative unique continuation properties of real-valued solutions to elliptic equations in the plane.
Under a general set of assumptions on the operator, we establish quantitative forms of Landis' conjecture.
Of note, we prove a version of Landis' conjecture for solutions to $-\Delta u + V u = 0$, where $V$ is a bounded function whose negative part exhibits polynomial decay at infinity.
The main mechanism behind the proofs is an order of vanishing estimate in combination with an iteration scheme.
To prove the order of vanishing result, we present a new idea for constructing positive multipliers and use it reduce the equation to a Beltrami system.
The resulting first-order equation is analyzed using the similarity principle and the Hadamard three-quasi-circle theorem.
\\

\Keywords{Landis' conjecture; quantitative unique continuation; order of vanishing; Beltrami system} \\

\MSC{35B60, 35J10} 

\end{abstract}

\section{Introduction}

In the late 1960s, E.~M.~Landis conjectured that if $u$ is a bounded solution to $-\LP u + V u = 0$ in $\R^n$, where $V$ is a bounded function and $\abs{u(x)} \lesssim \exp\pr{- c \abs{x}^{1+}}$, then $u \equiv 0$.
This conjecture was later disproved by Meshkov who in \cite{Mes92} constructed non-trivial functions $u$ and $V$ that solve $-\LP u + V u = 0$ in $\R^2$, where $V$ is bounded and $\abs{u(x)} \lesssim \exp\pr{- c \abs{x}^{4/3}}$. 
Meshkov also proved a {\em qualitative unique continuation} result: if $-\LP u + V u = 0$ in $\R^n$, where $V$ is a bounded function and $\abs{u\pr{x}} \lesssim \exp\pr{- c \abs{x}^{4/3+}}$, then necessarily $u \equiv 0$.
In their work on Anderson localization \cite{BK05}, Bourgain and Kenig established a {\em quantitative} version of Meshkov's result. 
They showed that if $u$ and $V$ are bounded, and $u$ is a normalized solution for which $\abs{u(0)} \ge 1$, then a three-ball inequality derived from a Carleman estimate shows that for sufficiently large values of $R$,
\begin{equation}
 \inf_{|x_0| = R}\norm{u}_{L^\iny\pr{B(x_0, 1)}} \ge \exp{(-CR^{\be}\log R)},
\label{est}
\end{equation} 
where $\be = \frac 4 3$.
Since $ \frac 4 3 > 1$, the constructions of Meshkov, in combination with the qualitative and quantitative unique continuation theorems just described, indicate that Landis' conjecture cannot be true for complex-valued solutions in $\R^2$.
However, Landis' conjecture still remains open in the real-valued and higher-dimensional settings.

Here we prove a collection of quantitative unique continuation results for real-valued solutions to equations in the plane of the form
\begin{align}
\mathcal{L} u := - \di \pr{A \gr u} + W \cdot \gr u +  V u = 0,
\label{ePDE}
\end{align}
where the coefficient matrix $A = \pr{a_{ij}}_{ij=1,2}$ is assumed to be bounded and elliptic.
That is, there exists $\la > 0$ so that for all $z, \xi, \zeta \in \R^2$,
\begin{align}
&\la \abs{\xi}^2 \le a_{ij}\pr{z} \xi_i \xi_j 
\label{ellip} \\
& a_{ij}\pr{z} \xi_i \zeta_j \le \la^{-1} \abs{\xi} \abs{\zeta}. 
\label{Abd}
\end{align}
Further, $A$ is Lipschitz continuous with a decaying derivative.
This means that there exist constants $\mu_0, \eps_0 > 0$ so that
\begin{align}
& \abs{\gr a_{ij}\pr{z}} \le \mu_0 \innp{z}^{-\pr{1 + \eps_0}},
\label{Lips}
\end{align}
where we recall that $\innp{z} = \sqrt{1 + \abs{z}^2}$.
Moreover, we assume that $V, W \in L^\iny_{loc}\pr{\R^2}$.

Before stating the first main theorem, we introduce an important definition.

\begin{defn}
We say that the operator $\mathcal{L}$ is {\em non-negative} in $\Om \su \R^2$ if the cone of positive solutions to $\mathcal{L} u = 0$ in $\Om$ is non-empty.
\end{defn}

An important implication of this definition is that if $\mathcal{L}$ is non-negative, then there exists an eigenvalue $\la_0 \le 0$ and a positive, continuous function $\phi$ so that
$$\mathcal{L}\phi + \la_0 \phi = 0 \quad \text{ in } \Om.$$
In particular, $\phi$ is a positive supersolution.

We now state the main theorem.

\begin{thm}
\label{LandisThm0}
Let the coefficient matrix $A$ satisfy \eqref{ellip}, \eqref{Abd}, and \eqref{Lips}. 
Assume that $\norm{V}_{L^\iny\pr{\R^2}} \le \mu_1^2$, $\norm{W}_{L^\iny\pr{\R^2}} \le \mu_2$, and that there exists an exterior domain $\Om \su \R^2$ such that $\mathcal{L}$ is non-negative in $\Om$.
Let $u: \R^2 \to \R$ be a solution to \eqref{ePDE} for which 
\begin{align}
&\abs{u\pr{z}} \le C_0 \abs{z}^{c_0}
\label{uBd} \\
& \abs{u\pr{0}} \ge 1.
\label{normed}
\end{align}
Then for any $\epsilon > 0$ and any $R \ge R_0\pr{\la, \mu_0, \mu_1, \mu_2, \eps_0, \Om, C_0, c_0, \eps}$, 
\begin{equation}
\inf_{\abs{z_0} = R} \norm{u}_{L^\iny\pr{B_1\pr{z_0}}} \ge \exp\pr{- R^{1+\eps}}.
\label{globalEst}
\end{equation}
If $\Om = \R^2$, we can take $\eps_0 = -1$ in \eqref{Lips} and assume that $\abs{u\pr{z}} \le \exp\pr{C_0 \abs{z}}$ instead of \eqref{uBd}, then \eqref{est} holds for all $R \ge R_0\pr{\la}$ with $\be = 1$ and $C = C\pr{\la, \mu_0, \mu_1, \mu_2,C_0}$.
\end{thm}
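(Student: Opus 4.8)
The plan is to prove both assertions of Theorem~\ref{LandisThm0} from a single order-of-vanishing estimate, feeding it into an iteration (propagation-of-smallness) scheme. I will first address the general statement \eqref{globalEst} under the weaker hypothesis that $\mathcal{L}$ is non-negative only on an exterior domain $\Om$, and then explain how the $\Om = \R^2$ case sharpens to the Meshkov-type exponent $\be = 1$.

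\emph{Step 1: Reduction to a positive supersolution and a multiplier.} By the remark following the definition of non-negativity, there is $\la_0 \le 0$ and a positive continuous $\phi$ on $\Om$ with $\mathcal{L}\phi + \la_0 \phi = 0$; since $\la_0 \le 0$, $\phi$ is a positive supersolution, $\mathcal{L}\phi \le 0$. The key idea (promised in the abstract) is to use $\phi$ to build a genuine positive \emph{multiplier}: substituting $u = \phi v$ into $\mathcal{L} u = 0$ produces, after dividing by $\phi$, a new second-order equation for $v$ with no zeroth-order term of the wrong sign — concretely, $-\di(\phi^2 A \gr v) + (\text{first order}) v\,' \text{-type terms} = 0$ in divergence form, plus a zeroth-order coefficient controlled by $(\mathcal{L}\phi)/\phi \le 0$. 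Because the zeroth-order part now has a favorable sign, the equation for $v$ is of the type handled by the Beltrami-system technique. The decay hypotheses \eqref{Lips} on $\gr A$ (with $\eps_0 > 0$ on an exterior domain, or $\eps_0 = -1$, i.e. merely Lipschitz, when $\Om = \R^2$) are what allow the construction of $\phi$ with controlled gradient, so the new coefficients stay bounded and elliptic.

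\emph{Step 2: Order of vanishing via Beltrami + similarity principle + three-quasi-circle.} Writing the (real) gradient of $v$ as a single complex function and using the standard reduction of a real second-order elliptic equation in the plane to a first-order Beltrami(-type) system $\del_{\bar z} f = \mu\, \del_z f + \nu \ol{f} + \text{lower order}$, one applies the similarity principle to factor $f = e^{s} g$ with $g$ holomorphic and $s$ bounded (the bound depending on the $L^\infty/L^p$ norms of the Beltrami coefficients, hence on $\la, \mu_0, \mu_1, \mu_2, \eps_0$ and on $\Om$ through $\phi$). The Hadamard three-quasi-circle theorem applied to $g$ then yields a quantitative three-ball inequality, and a doubling-type consequence gives an order-of-vanishing bound: on $B_r$ with $r$ comparable to the geometry, $u$ cannot vanish to order faster than a power of $r$ (with the growth \eqref{uBd} and normalization \eqref{normed} pinning the constants). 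This is the technical heart and, I expect, \textbf{the main obstacle}: making the multiplier construction honest on an exterior domain (where $\phi$ is only defined outside a compact set, so one must patch $\phi$ across the boundary of $\Om$ without destroying positivity or the coefficient bounds), and tracking how the three-quasi-circle constant degrades as the circles grow — this degradation is exactly what forces the loss from $\be = 1$ to $1 + \eps$.

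\emph{Step 3: Iteration / propagation of smallness.} Fix $R$ large and a point $z_0$ with $|z_0| = R$. Starting from $|u(0)| \ge 1$, chain the three-ball inequality along a sequence of $O(R)$ overlapping balls of unit size running from $0$ out to $z_0$; at each step the lower bound is raised to a power $\theta_j \in (0,1)$ coming from that step's quasi-circle constant. The product of the exponents over $O(R)$ steps, combined with the order-of-vanishing bound controlling the ``exponent of smallness'' at each ball, yields $\inf_{|z_0|=R}\norm{u}_{L^\infty(B_1(z_0))} \ge \exp(-C R^{1+\eps} \log R)$ at worst; absorbing the $\log R$ and adjusting $\eps$ gives \eqref{globalEst}, valid once $R \ge R_0$ depending on all the listed parameters. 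The polynomial growth bound \eqref{uBd} enters only to keep the order-of-vanishing constants uniform along the chain.

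\emph{Step 4: The case $\Om = \R^2$.} When $\mathcal{L}$ is non-negative on all of $\R^2$, the multiplier $\phi$ is global, no patching is needed, and — crucially — with $\eps_0 = -1$ the coefficient bounds are scale-invariant, so the Beltrami coefficients for $v$ do not decay but stay bounded at every scale; the three-quasi-circle constant is then \emph{uniform}. Rescaling $u_R(z) = u(Rz)$, the equation for $u_R$ has coefficients of the same type (using the exponential growth $|u(z)| \le \exp(C_0|z|)$ to control $u_R$), and the uniform three-ball/doubling estimate yields an order of vanishing at the origin of size $O(R)$ after unscaling, hence a three-ball chain of length $O(R)$ with uniform exponent $\theta$. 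This produces the sharp Bourgain–Kenig-type bound $\inf_{|x_0|=R}\norm{u}_{L^\infty(B(x_0,1))} \ge \exp(-CR\log R)$, i.e. \eqref{est} with $\be = 1$ and $C = C(\la,\mu_0,\mu_1,\mu_2,C_0)$, for all $R \ge R_0(\la)$.
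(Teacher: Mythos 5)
Your Steps~1--2 identify the right tools (positive multiplier, Beltrami reduction, similarity principle, Hadamard three-quasi-circle), but two parts of the plan are genuinely wrong or incomplete.

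First, in Step~1 you substitute $u = \phi v$ with $\phi$ only a positive \emph{supersolution} and accept a zeroth-order term of favorable sign. That step is not enough: the Beltrami reduction requires the transformed equation to be literally divergence-free (so that a stream function $\tilde v$ exists with $f = \phi^2 v + i\tilde v$ holomorphic up to a bounded correction), and a leftover zeroth-order term, however well-signed, destroys that structure. The paper's Lemma~\ref{posMul0} exists precisely to close this gap: it pairs your supersolution $\phi_2$ with an explicit exponential subsolution $\phi_1 = e^{cKx}$ to produce a genuine positive \emph{solution} $\phi$ of \eqref{ePDE}; only then does the ground-state transform $v = u/\phi$ kill the zeroth-order term. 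Your proposal skips this. (Also, no ``patching of $\phi$ across $\partial\Om$'' is ever needed: the iteration is started far enough out that every ball used stays inside $\Om$.)

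Second, Step~3 is not the paper's iteration and does not deliver the claimed bound. You propose to chain $O(R)$ overlapping unit-scale three-ball inequalities from the origin out to $z_0$. A three-ball chain of length $N$ raises the exponent multiplicatively, so it produces an estimate of the form $\norm{u}_{B_1(z_0)} \ge c^{N}\,\norm{u}_{B_1(0)}^{(1/\theta)^N}$, which is \emph{doubly} exponential in $R$ when $N \sim R$ --- far weaker than $\exp(-CR^{1+\eps})$. The mechanism in the paper is different: one first primes the scheme with a Carleman-type estimate (Theorem~1.1 of [LW14], applied to the nondivergence form \eqref{ndForm}) giving an initial bound $\exp(-S^{\al_0})$ with $\al_0 \approx 2$; then Proposition~\ref{itProp0} is applied repeatedly, and each application is a single rescaling by $aT$ with $T = S^{1+\ga}$, moving from $|z_0|=S$ out to $|z_1| \approx S + S^{1+\ga}$. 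Because the coefficients \eqref{Lips} decay, the rescaled operator's norm $K$ and the rescaled positive-multiplier condition stay controlled, and Theorem~\ref{OofV} is then invoked with $F(K) \sim K^{-\ga/(1+\ga)}$ small; the output is a new exponent $\al_{n+1} = \tfrac{\al_n + \ga}{1+\ga} + \tfrac{\ga^2}{2}$ strictly below $\al_n$. After finitely many such steps ($N \lesssim 1/\ga^2$, not $O(R)$ steps), the exponent drops below $1+\ga\le 1+\eps$. None of this is a chain of unit balls; the improvement comes from moving to larger scales where the lower-order terms have decayed, not from multiplying three-ball exponents. Relatedly, your claim that the loss $1 \to 1+\eps$ comes from degradation of the three-quasi-circle constant along the chain is not the right explanation: it reflects the fact that the iterative exponent reduction only asymptotes to $1$ as $\ga \to 0$ and the iteration must terminate at some finite stage. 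Your Step~4 for $\Om = \R^2$ is closer to the truth (no iteration is needed; a single rescaling plus Theorem~\ref{OofV} gives $\be=1$), but should be stated as a one-shot application of the order-of-vanishing theorem at scale $R$ rather than another chain-of-balls argument.
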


Under a specific set of conditions on the lower order terms, we prove another version of this theorem.

\begin{thm}
\label{LandisThm}
Let the coefficient matrix $A$ satisfy \eqref{ellip}, \eqref{Abd}, and \eqref{Lips}. 
Assume that $V = V_+ - V_-$, $V_\pm : \R^2 \to \R_{\ge 0}$, $W : \R^2 \to \R^2$, and there exist constants $\mu_1, \mu_2, \eps_1, \eps_2 > 0$ so that 
\begin{align}
& \norm{V_+}_{L^\iny\pr{\R^2} } \le 1
\label{V+Cond} \\
&V_-\pr{z} \le \mu_1^2 \innp{z}^{-2\pr{1 + \eps_1}} 
\label{V-Cond} \\
& \abs{W\pr{z}} \le \mu_2 \innp{z}^{-\pr{1 + \eps_2}}.
\label{WCond}
\end{align}
Let $u: \R^2 \to \R$ be a solution to \eqref{ePDE} for which \eqref{uBd} and \eqref{normed} hold.
Then for any $\epsilon > 0$ and for any $R \ge R_0\pr{\la, \mu_0, \mu_1, \mu_2, \eps_0, \eps_1, \eps_2, C_0, c_0, \eps}$, \eqref{globalEst} holds.
\end{thm}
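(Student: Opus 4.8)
The plan is to reduce Theorem \ref{LandisThm} to an application of Theorem \ref{LandisThm0} by exhibiting a suitable exterior domain $\Om$ on which $\mathcal{L}$ is non-negative, and then to verify that the boundedness hypotheses on $V$ and $W$ needed by Theorem \ref{LandisThm0} hold (with the possibly larger constant $\mu_1$ coming from $\norm{V_+}_{L^\iny}\le 1$ together with the crude bound $V_-\le \mu_1^2$). The only genuinely new content is the construction of a positive supersolution on an exterior domain. First I would observe that, since $V_-$ and $W$ decay like $\innp{z}^{-2(1+\eps_1)}$ and $\innp{z}^{-(1+\eps_2)}$ respectively, and $A$ is Lipschitz with derivative decaying like $\innp{z}^{-(1+\eps_0)}$, the operator $\mathcal{L}$ is, far from the origin, a small perturbation of $-\di(A\gr\,\cdot\,) + V_+\,\cdot$ with $V_+ \ge 0$. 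Since the zeroth-order coefficient of this leading operator is non-negative, constants are supersolutions of it; the task is to absorb the perturbation $W\cdot\gr + (V_+ - V)\,\cdot = W\cdot\gr - V_-\,\cdot$.

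The key step is therefore: construct a positive function $\phi$ on an exterior domain $\Om = \{\abs{z} > \rho_0\}$ with $\mathcal{L}\phi \le 0$. I would look for $\phi$ of the form $\phi(z) = 1 + \psi(\abs{z})$ or, more robustly, $\phi(z) = \exp\pr{-\eta(\abs{z})}$ with $\eta$ bounded, chosen so that the radial part dominates. Plugging such a $\phi$ into $\mathcal{L}$, the ellipticity \eqref{ellip}–\eqref{Abd} controls the second-order term $-\di(A\gr\phi)$ from below by (a constant times) $\phi''$ plus lower-order radial contributions, the drift term contributes $O\pr{\mu_2 \innp{z}^{-(1+\eps_2)} \abs{\phi'}}$, and the $V_-$ term contributes $-V_-\phi \ge -\mu_1^2 \innp{z}^{-2(1+\eps_1)}\phi$. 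Choosing $\eta(r)$ to be a suitable decaying function — e.g. a multiple of $r^{-\delta}$ for small $\delta>0$, or $\int_r^\infty s^{-1-\delta}\,ds$ — one arranges that the favorable term beats all the error terms once $r$ is large enough, giving $\mathcal{L}\phi \le 0$ on $\{\abs{z}>\rho_0\}$ for $\rho_0 = \rho_0(\la,\mu_0,\mu_1,\mu_2,\eps_0,\eps_1,\eps_2)$ large. This uses the paper's announced ``new idea for constructing positive multipliers''; the precise form of $\eta$ and the choice of $\delta$ relative to $\eps_0,\eps_1,\eps_2$ is where the calculation lives, but it is an ODE-level comparison, not hard analysis. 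Since $\phi > 0$ and $\mathcal{L}\phi \le 0$ on $\Om$, the cone of positive solutions is non-empty in the required sense (via the associated principal eigenvalue $\la_0 \le 0$), so $\mathcal{L}$ is non-negative in $\Om$.

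With this in hand, I would invoke Theorem \ref{LandisThm0}: the matrix $A$ satisfies \eqref{ellip}, \eqref{Abd}, \eqref{Lips} by hypothesis; $\norm{V}_{L^\iny(\R^2)} \le \norm{V_+}_{L^\iny} + \norm{V_-}_{L^\iny} \le 1 + \mu_1^2 =: \tilde\mu_1^2$ by \eqref{V+Cond}–\eqref{V-Cond}; $\norm{W}_{L^\iny(\R^2)} \le \mu_2 =: \tilde\mu_2$ by \eqref{WCond}; the exterior domain $\Om$ just constructed witnesses non-negativity; and $u$ satisfies \eqref{uBd}, \eqref{normed} by assumption. Hence \eqref{globalEst} holds for all $R \ge R_0$ with $R_0$ depending on $\la, \mu_0, \tilde\mu_1, \tilde\mu_2, \eps_0$ and on the data $\rho_0$ of $\Om$, hence ultimately on $\la,\mu_0,\mu_1,\mu_2,\eps_0,\eps_1,\eps_2,C_0,c_0,\eps$, as claimed. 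The main obstacle is entirely in the second paragraph: making the supersolution construction work uniformly, i.e. choosing the profile $\eta$ so that the ellipticity-generated radial term genuinely absorbs the drift and the sign-changing zeroth-order perturbation for all large $r$ simultaneously, with $\rho_0$ depending only on the stated constants; everything else is bookkeeping that defers to Theorem \ref{LandisThm0}.
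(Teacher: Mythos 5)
The route you sketch---build a positive supersolution on an exterior domain so that $\mathcal{L}$ is non-negative there, then invoke Theorem \ref{LandisThm0}---is genuinely different from the paper's, which instead proves a parallel iteration proposition (Proposition \ref{itProp}) and constructs the positive multiplier \emph{locally} on each rescaled ball $B_m$ via Lemma \ref{posMul}: after the rescaling $z\mapsto z_1 + aTz$, the decay conditions \eqref{V-Cond}, \eqref{WCond}, \eqref{Lips} make $\widetilde V_-$, $\widetilde W$, $\gr\widetilde a_{ij}$ small on $B_m$, which is exactly what Lemma \ref{posMul} requires. Your reduction would be tidier if it closed, but the supersolution step has two problems. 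The first is a sign slip: what certifies non-negativity of $\mathcal{L}$ is a positive \emph{super}solution, $\mathcal{L}\phi\ge 0$, not $\mathcal{L}\phi\le 0$; a positive subsolution always exists (e.g.\ $e^{\kappa x}$, as in Lemma \ref{posMul0}) and implies nothing about the sign of the principal eigenvalue.

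The second problem is substantive: the radial ansatz $\phi=\exp(-\eta(|z|))$ does not have the correct sign for a general elliptic $A$, and \eqref{ellip}--\eqref{Abd} alone do not ``control $-\di(A\gr\phi)$ from below by a constant times $\phi''$'' in a useful way. For $\eta(r)=cr^{-\delta}$ and $r$ large one computes
\begin{align*}
-\tr\!\pr{A\,D^2\phi}\sim c\delta\, r^{-\delta-2}\,\phi\,\brac{a_{rr}(\delta+2)-\tr A},
\end{align*}
where $a_{rr}$ is the quadratic form of $A$ in the unit radial direction. Ellipticity only gives $\la\le a_{rr}\le\la^{-1}$ and $2\la\le\tr A\le 2\la^{-1}$, so the bracket can be negative unless $\delta>2\la^{-2}-2$, which for $\la$ away from $1$ is incompatible with the requirement $\delta<2\eps_1$ needed to dominate $V_-$. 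The favourable contribution in $-\Delta\phi$ is itself a near-cancellation of size $O(\delta)$, and a non-isotropic $A$ can flip it. To close the gap you must first use \eqref{Lips} to deduce that $A(z)$ converges to a constant matrix $A_\infty$ at infinity, perform the linear change of variables sending $A_\infty$ to $I$ (which preserves \eqref{V+Cond}--\eqref{WCond} up to constants), and only then choose $\delta$ small on $\{|z|>\rho_0\}$ where the transformed ellipticity ratio is close to $1$; alternatively, replace the radial profile by a concave function of the fundamental solution $\Ga$ of $-\di(A\gr)$, for which $-\di(A\gr\phi)=-g''(\Ga)\,A\gr\Ga\cdot\gr\Ga$ has a clean sign. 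Either way this is not a deferrable ``ODE-level comparison'': it is exactly where the decay of $\gr a_{ij}$ enters, parallel to the normalization $\widetilde A(\hat z)=I$ carried out in the paper's own proofs of Theorems \ref{LandisThm0} and \ref{LandisThm}.
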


The main mechanism behind the proofs of these theorems is an order of vanishing result.
Once we prove such estimates, we use a scaling argument in combination with an iteration scheme similar to those in \cite{Dav14}, \cite{LW14} and \cite{DKW18} to prove each unique continuation at infinity theorem.

As is standard, we use the notation $B_r\pr{z}$ to denote a ball of radius $r$ centered at $z$, and occasionally write $B_r$ when the center of the ball is understood.
Recall from \cite{DKW17} and \cite{DW17} that $Q_s$ denotes a {\em quasi-ball} associated to a second-order elliptic operator in divergence form, $L$.
More precisely, $Q_s$ is a set in $\R^2$ whose boundary curve is an $s$-level set of the fundamental solution of $L$.
To accommodate for scaling considerations, $Q_s$ is defined so that if $L = \LP$, then $Q_s = B_s$.
Further details can be found in Section \ref{quasi}.
In the following order of vanishing theorem, the functions $\si$ and $\rho$ provide lower and upper  bounds on the radii of the quasi-balls.
More details on these functions may also be found in Section \ref{quasi}.

\begin{thm}
\label{OofV}
Let the coefficient matrix $A$ satisfy \eqref{ellip} and \eqref{Abd}.
Let $F$ be a function such that $K^{-1} \lesssim F\pr{K} < 1$ for all $K \ge 1$.
For some $K \ge 1$, define 
\begin{align}
& d = \si\pr{1 - F\pr{K}}
\label{dDef} \\
& b = \rho\pr{1 + F\pr{K}}
\label{bDef} \\
& m = b + F\pr{K}.
\label{mDef}
\end{align}
Assume that $\norm{\gr a_{ij}}_{L^\iny\pr{B_m}} \le K$, $\norm{V}_{L^\iny\pr{B_m}} \le K^2$, $\norm{W}_{L^\iny\pr{B_m}} \le K$, and that there exists a positive function $\phi$ that solves \eqref{ePDE} in $B_m$.
Let $u$ be a real-valued solution to \eqref{ePDE} in $B_m$ that satisfies
\begin{align}
& \norm{u}_{L^\iny\pr{B_{{m}}}} \le \exp\pr{C_1 K}
\label{localBd} \\
& \norm{u}_{L^\iny\pr{B_d}} \ge \exp\pr{- c_1 K^p},
\label{localNorm}
\end{align} 
for some $p \ge 0$.
Then for any $r$ sufficiently small, 
\begin{equation}
\norm{u}_{L^\iny\pr{B_r}} \ge r^{C K^q /F\pr{K}},
\label{localEst}
\end{equation}
where $q = \max\set{1, p}$ and $C = C\pr{\la, C_1, c_1}$.
\end{thm}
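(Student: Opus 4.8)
The plan is to follow the multiplier-plus-Beltrami strategy advertised in the abstract. Since $\phi>0$ solves $\mathcal L\phi=0$ in $B_m$, I would first perform the substitution $u = \phi v$, which transforms the equation $\mathcal Lu=0$ into a divergence-form equation for $v$ \emph{without zeroth-order term}: a computation using $-\di(A\gr\phi)+W\cdot\gr\phi+V\phi=0$ shows $v$ satisfies $-\di(\phi^2 A\gr v) + \phi\,W\cdot(\phi\gr v)=0$, i.e. an equation of the form $\di(\tilde A\gr v)+\tilde W\cdot\gr v=0$ with $\tilde A=\phi^2A$ still elliptic (with ellipticity constants depending on $\la$ and on $\inf_{B_m}\phi$, $\sup_{B_m}\phi$) and $\tilde W$ bounded. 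The point of this step is that the new equation is of the type to which the two-dimensional Beltrami reduction applies cleanly. Here I must be careful to track how $\inf\phi$ and $\sup\phi$ enter; the hypotheses on $\|\gr a_{ij}\|$, $\|V\|$, $\|W\|$ in $B_m$ are exactly what is needed (via interior Harnack/elliptic estimates, using that the coefficients on $B_m$ have controlled size in terms of $K$) to bound $\sup_{B_m}\phi/\inf_{B_m}\phi$ by something like $\exp(CK)$, which is the source of the $K$-dependence in the final exponent.

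Next, in the plane a divergence-form elliptic equation $\di(\tilde A\gr v)+\tilde W\cdot\gr v=0$ can be written as a Beltrami (or Beltrami-type) system: introducing the stream function of $\tilde A\gr v$ and forming the complex function $f = v + i\tilde v$ (or $f=\del v$ in the appropriate sense), one gets $\del_{\bar z} f = \nu\,\del_z f + \mu\,\overline{\del_z f} + \gamma f$ with $\|\nu\|_\iny + \|\mu\|_\iny \le k < 1$ controlled by the ellipticity, and $\|\gamma\|_\iny$ controlled by $\|\tilde W\|_\iny$, hence by $K$ (after absorbing the $\phi$ factors). Then I invoke the \emph{similarity principle}: $f = e^{s}\,g$ where $g$ is a genuine pseudo-analytic function (in fact $W^{1,p}$-solution of a Beltrami equation without the $\gamma f$ term, hence quasiregular, hence representable as $h\circ\chi$ for $h$ holomorphic and $\chi$ a quasiconformal homeomorphism) and $\|s\|_{L^\iny(B_m)} \le C(\la)\,\|\gamma\|_{L^p(B_m)} \le C(\la)\,K$. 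The upshot is that $\log|f|$ differs from a function that is subharmonic up to a quasiconformal change of variables by a term bounded by $CK$ on $B_m$.

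With $f$ so represented, I would apply the Hadamard three-quasi-circle theorem (the quasi-ball version from \cite{DKW17}, \cite{DW17}, which is why the statement is phrased in terms of $Q_s$, $\si$, $\rho$): for radii of quasi-circles $r < d < m$ one gets a convexity inequality of the shape
\[
\log\!\Big(\sup_{Q_{\si(1-F(K))}}|f|\Big)\;\le\;\theta\,\log\!\Big(\sup_{Q_r}|f|\Big)+(1-\theta)\,\log\!\Big(\sup_{Q_{\rho(1+F(K))}}|f|\Big)+CK,
\]
where the interpolation weight $\theta$ is comparable to $F(K)$ (this is exactly where $d=\si(1-F(K))$, $b=\rho(1+F(K))$, $m=b+F(K)$ are chosen, so that the three quasi-circles are separated on a scale $F(K)$ and the classical estimate $\theta \gtrsim \log(b/d)/\log(b/r) \gtrsim F(K)/\log(1/r)$ holds). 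Now I feed in the two hypotheses: \eqref{localBd} gives $\sup_{B_m}|u|\le\exp(C_1K)$, hence $\sup_{Q_b}|f|\le\exp(CK)$ after translating back through $u=\phi v$ and the similarity factor; \eqref{localNorm} gives $\sup_{Q_d}|f|\ge\exp(-cK^p)$ — here I use interior gradient/elliptic estimates once more to pass between $\|u\|_{L^\iny(B_d)}$ and the sup of $f$ on a comparable quasi-ball, and to pass between $\|u\|_{L^\iny(B_r)}$ and $\sup_{Q_r}|f|$. Rearranging the three-quasi-circle inequality for $\log\sup_{Q_r}|f|$ and using $\theta\gtrsim F(K)/\log(1/r)$ yields
\[
\log\|u\|_{L^\iny(B_r)}\;\ge\;-\,\frac{C K^q}{F(K)}\,\log\frac1r
\]
with $q=\max\{1,p\}$ (the $K^q$ arising as the max of the $CK$ coming from \eqref{localBd}/similarity/Harnack and the $cK^p$ coming from \eqref{localNorm}), and $C=C(\la,C_1,c_1)$, which is exactly \eqref{localEst}.

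The main obstacle I anticipate is not the Beltrami machinery itself — that is by now standard — but the bookkeeping of constants through the substitution $u=\phi v$: one must verify that $\log(\sup_{B_m}\phi/\inf_{B_m}\phi)$, the ellipticity constants of $\phi^2A$, and the $L^p$ norm of the new zeroth-order Beltrami coefficient are all controlled \emph{linearly in $K$} under the hypotheses $\|\gr a_{ij}\|_{L^\iny(B_m)}\le K$, $\|V\|\le K^2$, $\|W\|\le K$, and that the geometry of the quasi-balls $Q_s$ for the operator $\di(\phi^2A\,\cdot\,)$ is still comparable, on the relevant range of radii, to that for $\di(A\,\cdot\,)$ so that $\si$ and $\rho$ can be used as stated. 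Getting the interpolation weight $\theta$ to be $\gtrsim F(K)/\log(1/r)$ with the right dependence is the other delicate point, and it is precisely the reason for the specific definitions \eqref{dDef}–\eqref{mDef}.
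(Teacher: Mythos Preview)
Your overall strategy---positive multiplier, Beltrami reduction, similarity principle, Hadamard three-quasi-circle---is exactly the paper's. But there is one technical choice where you diverge from the paper, and it is precisely the point you flag as the main obstacle.

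You absorb $\phi$ into the leading coefficient, writing the equation for $v=u/\phi$ as $\di(\phi^2 A\gr v)+\phi^2 W\cdot\gr v=0$ with $\tilde A=\phi^2 A$. This makes the ellipticity ratio of $\tilde A$ of order $\exp(CK)$, and the quasi-balls for $-\di(\tilde A\,\gr\,\cdot\,)$ are then $K$-dependent and bear no controlled relation to the $\si,\rho$ (hence the $d,b,m$) appearing in the theorem statement, which are defined with respect to the \emph{original} operator. The paper sidesteps this completely: instead of putting $\phi$ into the principal part, it divides by $\phi$ and writes
\[
-\di(A\gr v)+\bigl(W-2A\,\gr\log\phi\bigr)\cdot\gr v=0,
\]
then normalizes to $\overline A=A/\sqrt{\det A}$ (symmetric, determinant one). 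The leading operator is now fixed and $K$-independent, so the quasi-balls and $\si,\rho$ are exactly the ones in the hypotheses. All of the $\phi$-dependence sits in the drift, and the key input is not a Harnack bound on $\sup\phi/\inf\phi$ but the gradient estimate $\|\gr\log\phi\|_{L^\infty(B_b)}\le C(\la)K$ (proved separately as Lemma~\ref{grphiLem}), which guarantees the Beltrami zeroth-order coefficient has $L^\infty$ norm $\le CK$.

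A second, related point: with $W\ne 0$ the vector field $\tilde A\gr v$ is not divergence-free, so the stream-function construction $f=v+i\tilde v$ you mention is not directly available. The paper instead uses the factorization $\di(\overline A\,\gr\,)=(D+\widetilde W)\widetilde D$ (Lemma~\ref{decompLem}) and sets $w=\widetilde D v$, which satisfies $Dw=(\Upsilon-\widetilde W)w$ with $\|\Upsilon-\widetilde W\|_\infty\le CK$. The similarity principle and the three-quasi-circle theorem are then applied to $w$ (a gradient-type quantity), and one passes back to $u$ via interior gradient estimates and the bounds on $\phi$ from Lemmas~\ref{grphiLem} and~\ref{phiBdLem}. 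The three radii are $s/2<1<1+F(K)$ at the quasi-ball level, giving $\theta=\log(1+F(K))/\log(2(1+F(K))/s)$, so that $1/\theta\gtrsim \log(1/r)/F(K)$ as you anticipated. Once you make the reformulation above, the obstacle you identified disappears and the bookkeeping is straightforward.
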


In this theorem, we require a positive solution.
We present two ways to show that such a function $\phi$ exists.
If $\mathcal{L}$ is non-negative, the corresponding supersolution can be used to show that a positive multiplier exists.
These details may be found in Lemma \ref{posMul0}.
Alternatively, under suitable conditions on the norms of $\gr a_{ij}$, $V_-$ and $W$, Lemma \ref{posMul} shows directly that such a function $\phi$ exists.

\begin{rem}
The condition given in \eqref{Lips} was assumed previously in \cite{Tu10} and \cite{LW14}.
Not only will we need this assumption to start the iteration process that proves Theorems \ref{LandisThm0} and \ref{LandisThm}, but it will be crucial to controlling the size of quasi-balls, and therefore ensuring that the iteration argument works.
Further details will be discussed in the proof below.
\end{rem}

\begin{rem}
\label{symm}
Note that we can write
\begin{align*}
\mathcal{L} = - \di\pr{\hat A \gr } + \hat W \cdot \gr + V,
\end{align*}
where $\hat A$ denotes the symmetrization of $A$, $\hat W = W +  \pr{\del_y \,\check a_{12}, -\del_x \,\check a_{12}}$, and $\check a_{12} = \frac{a_{12} - a_{21}} 2$.
It is clear that $\hat A$ satisfies all of the same conditions as $A$, namely \eqref{ellip}, \eqref{Abd}, and \eqref{Lips}.
Since condition \eqref{Lips} (even with $\eps_0 = -1$) implies that $\norm{\hat W}_{L^\iny\pr{\R^2}} \le \mu_2 + \mu_0$, then there is no loss of generality in assuming that $A$ is symmetric in Theorem \ref{LandisThm0}.
Similarly, condition \eqref{Lips} implies that $\abs{\hat W\pr{z}} \le \pr{\mu_2 + \mu_0} \innp{z}^{-\pr{1 + \hat \eps_2}}$, where $\hat \eps_2 = \min\set{\eps_0, \eps_2}$, so there is also no loss in assuming that $A$ is symmetric in Theorem \ref{LandisThm}.
Finally, since we assume that $\norm{\gr a_{ij}}_{L^\iny\pr{B_m}} \le K$ and $\norm{W}_{L^\iny\pr{B_m}} \le K$ in Theorem \ref{OofV}, then $\norm{\gr \hat a_{ij}}_{L^\iny\pr{B_m}} \le K$ and $\norm{\hat W}_{L^\iny\pr{B_m}} \le C K$ as well.
Therefore, we assume from now on that $A$ is symmetric since it will simplify many of the proofs.
For the lemmas that require symmetry, we will indicate this additional hypothesis.
\end{rem}

In recent years, considerable progress has been made towards resolving Landis' conjecture in the real-valued planar setting.
In their breakthrough article \cite{KSW15}, Kenig, Silvestre, and Wang introduced a new method based on tools from complex analysis to reduce the value of $\be$ in \eqref{est} from $4/3$ down to $1$. 
Using the scaling argument first introduced in \cite{BK05}, the Landis-type theorems in \cite{KSW15} are consequences of order of vanishing estimates for solutions to local versions of the equation.
We now recall the main steps involved in proving these local theorems.
Under the assumption that $V$ is bounded and a.e. non-negative, a positive multiplier is used to transform the PDE for $u$ into a divergence equation.
The stream function associated to this divergence-free equation is then used to produce a first-order complex-valued equation known as a Beltrami system.
The similarity principle for such equations, in combination with the Hadamard three-circle theorem, gives rise to a three-ball inequality that is much sharper than those produced previously using Carleman estimate techniques.

The methods from \cite{KSW15} have been used and generalized in subsequent years to prove many Landis-type theorems.
In \cite{DKW17}, we proved variable-coefficient versions of the theorems in \cite{KSW15} through the use of quasi-conformal transformations.
The results in \cite{DW17} apply to very general elliptic equations and rely on the theory of boundary value problems to produce positive multipliers.
In our most recent article, \cite{DKW18}, we considered potential functions that are not necessarily non-negative, but are allowed to have some rapidly decaying negative part.
To treat this setting, we studied the quantitative behavior of solutions to {\em vector-valued} Beltrami systems and established appropriate generalizations of the similarity principles that had been used previously.
For a more general survey of Landis' conjecture and other related unique continuation results, we refer the reader to the introduction of \cite{KSW15}.

In \cite{ABG19} and \cite{Ros18}, the authors proved qualitative unique continuation at infinity estimates for solutions to elliptic equations under the assumption that the principle eigenvalue is non-negative.
Theorem \ref{LandisThm0} was motivated by these works and may be interpreted as a quantitative version of their estimates.

Theorem \ref{LandisThm} improves upon our most recent results in a few ways.
First, instead of imposing the condition that $V_-$ must decay rapidly (exponentially) at infinity, we can now handle the case where $V_-$ exhibits slow (polynomial) decay at infinity.
Second, we allow the leading operator to be variable.
In particular, we assume that the coefficients are Lipschitz continuous with derivatives that decay slowly.
Finally, we allow the first-order term to be non-zero, but we require that it also decays slowly at infinity.
An important example within this framework is the equation
$$- \LP u + V u = 0,$$
where $V_+ \in L^\iny$ and $V_-\pr{z} \lesssim \innp{z}^{-N}$ for any $N > 2$.

There are two main challenges involved in proving a quantitative version of Landis' conjecture in the current setting.
First, because $V_-$ is no longer assumed to be non-trivial, a new idea is required to establish the existence of a positive multiplier associated to the equation.
In \cite{DKW18}, we defined a function $V_\de = V + \de^2$, where $\de > 0$ was chosen so that $V_\de \ge 0$, and then used the technique from \cite{KSW15} to produce a positive multiplier associated to $-\LP + V_\de$ .
Because $\de > 0$, our analysis of the resulting first-order equation had to be done in a vector-valued setting, thereby requiring a number of new ideas.
In the current paper, we avoid this lifting technique and instead modify the approach from \cite{KSW15}.
To prove Theorem \ref{LandisThm}, we directly construct positive super- and subsolutions associated to our equation under the assumption that $\gr a_{ij}$, $V_-$ and $W$ are sufficiently small in norm.

The second challenge involves using the local order of vanishing estimates to prove the unique continuation at infinity theorems.
Because of the decay conditions that we impose, a single application of the scaling technique from \cite{BK05} doesn't yield a very strong result.
Therefore, we need to design an iteration scheme and repeatedly apply an observation based on the scaling argument in \cite{BK05}.
The iteration scheme in this article is very similar to the one that we developed in \cite{DKW18} (which was based on the one in \cite{Dav14}), but because we are in a variable coefficient setting, we require quantitative control of the ellipticity and boundedness parameters.
This is one of the reasons why assumption \eqref{Lips} is so important.

The outline of this article is as follows.
To describe the main ideas in the proof, we begin with a preliminary section in which we describe the proof of Theorem \ref{LandisThm} for the case where $\mathcal{L} = - \LP + V$.
This sketch is presented in Section \ref{S2}.
In Section \ref{S3}, we show how to produce positive multipliers under different sets of conditions.
Then we prove a collection of bounds for these positive solutions.
Fundamental solutions and quasi-balls are introduced in Section \ref{quasi}.
While the first part of this section is reproduced from \cite{DKW17} and \cite{DW17}, the second part provides more refined bounds on the inner and outer radii of our quasi-balls.
The Beltrami operators are reviewed in Section \ref{Beltrami}.
Since we are able to make some simplifying assumptions for our setting, the results presented here are much simpler than those that previously appeared in \cite{DKW17} and \cite{DW17}.
The proof of Theorem \ref{OofV} is contained in Section \ref{ordVan}.
Section \ref{Landis0} presents an important proposition and uses it to prove Theorem \ref{LandisThm0}.
Finally, Section \ref{Landis} presents the iteration argument that leads to the proof of Theorem \ref{LandisThm}.

\section{The proof idea for $-\LP + V$}
\label{S2}

Given that the arguments for Theorems \ref{LandisThm0} and \ref{LandisThm} become technically complicated in the general setting, we present a sketch of the simplest case of Theorem \ref{LandisThm} when the operator is $\mathcal{L} = - \LP + V$.
That is, we will describe the proof of the following theorem:

\begin{thm0}[Landis-type theorem]
Assume that $V = V_+ - V_-$, where $V_+$ is bounded and $V_-$ exhibits polynomial decay at infinity. 
Let $u$ be a real-valued solution to $$-\LP u + V u = 0 \quad \text{ in } \R^2$$ that is polynomially bounded and normalized.
Then for any $\epsilon > 0$ and any $R >> 1$, we have
\begin{equation*}
\inf_{\abs{z_0} = R} \norm{u}_{L^\iny\pr{B_1\pr{z_0}}} \ge \exp\pr{- R^{1+\eps}}.
\end{equation*}
\end{thm0}

To prove this theorem, we use an iteration scheme.
The main observation is that we can improve (reduce) the exponent in the lower bound estimate by moving further and further from the origin.
The next proposition describes this iteration scheme.
Note that $\be < \al$ and $S < R$.

\begin{prop0}[Iteration proposition]
Assume that $V = V_+ - V_-$, where $V_+$ is bounded and $V_-$ exhibits polynomial decay at infinity. 
Let $u$ be a real-valued solution to $$-\LP u + V u = 0 \quad \text{ in } \R^2$$ that is polynomially bounded.
Assume that for $S >> 1$, there exists an $\al > 1$ so that 
\begin{align*}
\inf_{\abs{z_0} = S}\norm{u}_{L^\iny\pr{B_1\pr{z_0}}} \ge \exp\pr{- S^\al}.
\end{align*}
Then for any positive $\ga << 1$ and any $R \ge S + S^{1 + \ga} - 1 > S$, it holds that
\begin{equation*}
\inf_{\abs{z_1} = R} \norm{u}_{L^\iny\pr{B_1\pr{z_1}}} \ge \exp\pr{- C_2 R^\be \log R},
\end{equation*}
where $C_2$ is a universal constant and $\be = \max\set{\frac \al {1 + \ga}, 1} + \frac \ga {1 + \ga}$.
\end{prop0}

The idea behind the iterative argument that proves the Landis-type theorem is as follows.
To get started, we apply the quantitative unique continuation theorem of Bourgain and Kenig from \cite{BK05} to establish our initial estimate.
We can choose $\al$ to be any number greater than $\frac 4 3$ by making $S$ sufficiently large and taking $z_0 \in \R^2$ with $\abs{z_0} = S$.
Then we apply the iteration proposition to get an estimate near another point $z_1$, where $z_1$ is further from the origin than $z_0$.
By appropriately choosing $\ga$, we can ensure that $\be < \al$.
Then we repeat the argument by using the bound at $z_1$ as the starting point, then applying the iteration proposition to get a better bound at some further point $z_2$.
By carefully choosing the value of $\ga$ at each application of the proposition, we can ensure that after a finite number of turns, the value of $\be$ will be arbitrarily close to $1$, proving the theorem.

To prove the iteration proposition, we rely upon a scaling argument reminiscent of the one in \cite{BK05} in combination with an order of vanishing result.
The version of the order of vanishing statement that we use for $-\LP + V$ is as follows:

\begin{thm0}[Order of vanishing]
Let $F$ be a function such that $K^{-1} \lesssim F\pr{K} < 1$ for all $K \ge 1$.
For some $K \ge 1$, define $d = 1 - F\pr{K}$, $b = 1 + F\pr{K}$, and $m = 1 + 2F\pr{K}$.
Assume that $\norm{V}_{L^\iny\pr{B_m}} \le K^2$ and that there exists a positive function $\phi$ that solves $-\LP \phi + V \phi = 0$ in $B_m$.
Let $u$ be a real-valued solution to $-\LP u + V u = 0$ in $B_m$ that satisfies
\begin{align*}
& \norm{u}_{L^\iny\pr{B_{{m}}}} \le \exp\pr{C_1 K} \\
& \norm{u}_{L^\iny\pr{B_d}} \ge \exp\pr{- c_1 K^p},
\end{align*} 
for some $p \ge 0$.
Then for any $r$ sufficiently small, 
\begin{equation*}
\norm{u}_{L^\iny\pr{B_r}} \ge r^{C K^q /F\pr{K}},
\end{equation*}
where $q = \max\set{1, p}$ and $C = C\pr{\la, C_1, c_1}$.
\end{thm0}

This theorem, as presented here, very much resembles the order of vanishing estimate from \cite{KSW15} with two notable differences.
First, instead of working on balls for which the differences of radii is on the order of $1$, here we need that the differences between the radii of $B_b$, $B_d$ and $B_m$ are small.
This technical assumption ensures that we can run the iteration scheme.
Since we use information near $z_0$ to get an estimate near $z_1$, but we also don't want to get too close to the origin (because the estimates won't be very good anymore and we won't be able to construct a positive multiplier), we have to restrict the size of the radii.
The other difference between our order of vanishing theorem and the one from \cite{KSW15} is the lower bound in the hypothesis.
Instead of assuming that the norm on the small ball is bounded below by a constant, we assume that it is bounded below by an exponential that we quantify in terms of $K$.
Again, this is crucial to the technicalities of the iteration scheme.

We briefly describe the proof of the order of vanishing theorem.
For the solution $u$ and the positive solution $\phi$, we define $v = u/\phi$ and note that $\di\pr{\phi^2 \gr v} = 0$.
Associated to this divergence-free equation is a stream function, $\tilde v$.
It can be shown that function $f := \phi^2 v + i \tilde v$ is a solution $\bar \del f = \al f$, where $\al$ is a bounded function for which $\norm{\al}_{\iny} \lesssim K$.
An application of the similarity principle then shows that $f = e^\om h$, where $\norm{\om}_{\iny} \lesssim K$ and $h$ is holomorphic.
The next step is to apply the Hadamard three-circle theorem to $h$ on three balls with radii $\frac r 2 < d < b$.
In this setting, the power is $\te = \frac{\log\pr{b/d}}{\log\pr{2b/r}}$, and this explains why $F\pr{K}$ comes out in denominator of the exponent in the final estimate. 
We then use elliptic estimates and simplify to reach the conclusion of the theorem.

Now we return to the proof of the iteration proposition.
For some small $\ga > 0$, let $T = S^{1+\ga}$.
Choose some $z_1 \in \R^2$ so that $\abs{z_1} = R := S + T - 1$ and define $z_0 = S \frac{z_1}{\abs{z_1}}$.
Define $\tilde u\pr{z} = u\pr{z_1 + T z}$ and $\tilde V\pr{z} = {T}^2 V\pr{z_1 + T z}$.
Then with $m = 1 + \frac{S}{2T}$, $-\LP \tilde u + \tilde V \tilde u = 0$ in $B_m$.
Assuming for the time being that a positive solution exists in $B_m$, the order of vanishing theorem is applicable with $K = T$, $F\pr{K} = \frac{S}{4T} = \frac 1 4 T^{-\frac{\ga}{1+\ga}}$, and $p =\frac \al {1 + \ga}$.
Choosing $r = \frac 1 T$ and noting that $\norm{\tilde u}_{B_{r}\pr{0}} = \norm{u}_{B_1\pr{z_1}}$ leads to the conclusion of the proposition.

To complete the argument, we must justify that a positive solution exists.
Since the negative part of $V$ is assumed to decay polynomially at infinity, this means that if $\ga$ is sufficiently small compared to this rate of decay, then $\tilde V_-$ is bounded by some small fixed constant in $B_m$.
This observation implies that a function of the form $m^2 + 1 - x^2 - y^2$ is a positive supersolution in $B_m$. 
Since $e^{c T x}$ is a positive subsolution, and we can find $C$ so that $C\pr{m^2 + 1 - x^2 - y^2} \ge  e^{c T x}$, there must exist a positive solution, closing the remaining gap in the proof.

\section{Positive multipliers}
\label{S3}

The positive multiplier is a crucial ingredient in the proof of Theorem \ref{OofV}.
Our first lemma shows that the existence of a positive supersolution implies the existence of a positive solution.

\begin{lem}
\label{posMul0}
Recall that $0 < d < 1 < b < m$ are as defined in \eqref{dDef} -- \eqref{mDef} for some $K \ge 1$.
Let the coefficient matrix $A$ satisfy \eqref{ellip} and \eqref{Abd} in $B_m$.
Assume that $\norm{\gr a_{ij}}_{L^\iny\pr{B_m}} \le K$, $\norm{V}_{L^\iny\pr{B_m}} \le K^2$, and $\norm{W}_{L^\iny\pr{B_m}} \le K$.
If there exists a positive supersolution $\phi_2$, defined and continuous in $B_m$, then there exists a positive multiplier $\phi$, defined and continuous in $B_m$, that satisfies \eqref{ePDE}.
\end{lem}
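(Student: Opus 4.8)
The plan is to use the classical method of sub- and supersolutions (Perron's method) to bridge the gap between having a positive supersolution and having an actual positive solution. The key idea is that we already have $\phi_2$ as a positive supersolution; I would first produce a positive subsolution $\phi_1$ that is dominated by $\phi_2$ on $\overline{B_m}$, and then invoke the monotone iteration / Perron argument to extract a solution $\phi$ sandwiched between them, which is therefore positive.

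First, I would construct an explicit positive subsolution. Following the spirit of the sketch in Section \ref{S2}, a natural candidate is $\phi_1 = e^{\kappa x}$ (or $e^{\kappa \cdot e \cdot z}$ for a suitable unit vector) with $\kappa$ a large constant depending on the data. One computes $\mathcal{L}(e^{\kappa x})$ directly: the principal part contributes something like $-a_{11}\kappa^2 e^{\kappa x}$ plus lower-order terms involving $\gr a_{ij}$, $W$, and $V$, each controlled by $K$ or $K^2$ on $B_m$. Using ellipticity \eqref{ellip}, the $-a_{11}\kappa^2$ term dominates once $\kappa$ is chosen large enough in terms of $\la$ and $K$ (say $\kappa \sim K/\la$), so that $\mathcal{L} \phi_1 \le 0$ in $B_m$, i.e. $\phi_1$ is a positive subsolution. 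Since $\phi_1$ is bounded above on $B_m$ and $\phi_2$ is continuous and strictly positive on the compact set $\overline{B_m}$ (hence bounded below away from zero), after multiplying $\phi_1$ by a small positive constant we may assume $\phi_1 \le \phi_2$ throughout $B_m$; rescaling a subsolution by a constant keeps it a subsolution since the equation is linear and homogeneous.

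Next, with the ordered pair $0 < \phi_1 \le \phi_2$ in hand, I would invoke the standard existence theorem for the Dirichlet problem for the divergence-form operator $\mathcal{L}$ with Lipschitz (hence bounded, measurable) coefficients: there is a unique weak solution $\phi$ to $\mathcal{L}\phi = 0$ in $B_m$ with $\phi = \phi_1$ (or any fixed function between $\phi_1$ and $\phi_2$) on $\partial B_m$, and by the weak maximum principle / comparison principle adapted to supersolutions and subsolutions, $\phi_1 \le \phi \le \phi_2$ in $B_m$. In particular $\phi \ge \phi_1 > 0$, so $\phi$ is the desired positive multiplier. Interior regularity for divergence-form equations with Lipschitz leading coefficients and bounded lower-order terms then gives that $\phi$ is continuous (indeed Hölder, even $C^{1,\alpha}$) in $B_m$; continuity up to the boundary follows from the regularity of $\partial B_m$ and of the boundary data.

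The main obstacle is the comparison principle step: for a general divergence-form operator $\mathcal{L} = -\di(A\gr\cdot) + W\cdot\gr + V$ with a sign-changing zeroth-order coefficient $V$, the naive maximum principle need not hold on a large ball. The resolution is precisely that the existence of the positive supersolution $\phi_2$ lets us work in the "conjugated" operator: setting $w = \phi/\phi_2$ reduces $\mathcal{L}$ acting near $\phi_2$ to an operator with no zeroth-order term (plus a first-order term absorbing $\gr\phi_2/\phi_2$), for which the comparison principle is classical. Equivalently, one runs the monotone iteration scheme $\mathcal{L}\phi_{k+1} + \Lambda \phi_{k+1} = \Lambda \phi_k$ with $\Lambda$ large enough that $\mathcal{L} + \Lambda$ satisfies the maximum principle on $B_m$ (possible since $\|V\|_{L^\iny(B_m)} \le K^2 < \iny$ and $B_m$ is bounded), starting from $\phi_0 = \phi_2$; the iterates decrease and stay above $\phi_1$, and the limit solves $\mathcal{L}\phi = 0$ with $\phi_1 \le \phi \le \phi_2$. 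Either route requires only care in tracking that all constants remain controlled by $\la$ and $K$, which matches the quantitative bookkeeping needed later in the paper.
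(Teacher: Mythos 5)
Your proposal matches the paper's argument essentially exactly: construct the positive subsolution $\phi_1(x,y) = \exp(cKx)$, use ellipticity \eqref{ellip} and the coefficient bounds to choose $c = c(\la)$ large enough that $\mathcal{L}\phi_1 \le 0$, scale so that $\phi_1 \le C\phi_2$, and invoke the standard sub/supersolution existence theory for the ordered pair. Your added explanation of why the comparison/iteration step is valid despite the sign-changing $V$ --- via conjugation by $\phi_2$ or via the $\mathcal{L}+\Lambda$ monotone scheme --- usefully fills in a step the paper states without comment (``It follows that there exists a positive solution\ldots''), but it is an elaboration of the same route, not a different one.
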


\begin{proof}
We first construct a positive subsolution.
Set $\phi_1\pr{x,y} = \exp\pr{c K x}$ for some $c$ to be determined.  
A computation shows that
\begin{align*}
\mathcal{L} \phi_1 
&=- \di \pr{A \gr \phi_1} + W \cdot \gr \phi_1 + V \phi_1 
= \brac{-\pr{\del_x a_{11} + \del_y a_{21} + a_{11} cK} cK  + W_1 cK + V }\phi_1 \\
&\le \brac{\pr{ \norm{\gr a_{11}}_{L^\iny\pr{B_m}} + \norm{\gr a_{21}}_{L^\iny\pr{B_m}} - a_{11} cK} cK  + \norm{W}_{L^\iny\pr{B_m}} cK + \norm{V_+}_{L^\iny\pr{B_m}} }\phi_1 \\
&\le \brac{ \pr{2 K  - \la c K}c K + c K^2 + K^2} \phi_1
= - \brac{ c\pr{\la c - 3} - 1} K^2 \phi_1.
\end{align*}
If we choose $c$ sufficiently large with respect to $\la$, then $\mathcal{L}\phi_1 \le 0$ and $\phi_1$ is a subsolution.
Since $\phi_2$ is continuous in $B_m$, then it is bounded, so there exists $C > 0$ so that $C\phi_2 \ge \phi_1$ in $B_m$.
It follows that there exists a positive solution $\phi$ to \eqref{ePDE} in $B_m$.
\end{proof}

If we assume that $\gr a_{ij}$, $V_-$ and $W$ are bounded and small in norm, we can show the existence of a positive multiplier by directly constructing a positive subsolution and applying the previous lemma. 

\begin{lem}
\label{posMul}
Recall that $0 < d < 1 < b < m$ are as defined in \eqref{dDef} -- \eqref{mDef} for some $K \ge 1$.
Let the coefficient matrix $A$ satisfy \eqref{ellip} and \eqref{Abd} in $B_m$.
Assume that $\norm{\gr a_{ij}}_{L^\iny\pr{B_m}} \le \frac \la {4m}$, $\norm{V_+}_{L^\iny\pr{B_m}} \le K^2$, $\norm{V_-}_{L^\iny\pr{B_m}} \le \frac \la {m^2 + 1}$, and $\norm{W}_{L^\iny\pr{B_m}} \le \frac \la {2m}$.
Then there exists a positive multiplier $\phi$, defined and continuous in $B_m$, that solves \eqref{ePDE}. 
\end{lem}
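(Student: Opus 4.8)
The plan is to exhibit an explicit positive supersolution of $\mathcal{L}$ on $B_m$ and then invoke Lemma \ref{posMul0} to promote it to a positive solution. By Remark \ref{symm} we may assume $A$ is symmetric, so $a_{12} = a_{21}$. The supersolution will be the one already flagged in the sketch of Section \ref{S2}:
$$\phi_2\pr{x,y} = m^2 + 1 - x^2 - y^2.$$
On $B_m$ one has $x^2+y^2 < m^2$, hence $1 \le \phi_2 \le m^2+1$ there; in particular $\phi_2$ is smooth, continuous on $\ol{B_m}$, and strictly positive, which is exactly the input required by Lemma \ref{posMul0}.

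The main step is to check that $\mathcal{L}\phi_2 \ge 0$ pointwise a.e.\ on $B_m$ (hence weakly). Since $\gr\phi_2 = \pr{-2x,-2y}$, the product rule (legitimate because the $a_{ij}$ are Lipschitz and $\phi_2$ is smooth) gives
$$\mathcal{L}\phi_2 = 2\pr{a_{11}+a_{22}} + 2\brac{x\pr{\del_x a_{11} + \del_y a_{12}} + y\pr{\del_x a_{12} + \del_y a_{22}}} - 2\pr{W_1 x + W_2 y} + \pr{V_+ - V_-}\phi_2.$$
Testing \eqref{ellip} against $\xi = e_1$ and $\xi = e_2$ yields $a_{11}, a_{22} \ge \la$ on $B_m$, so the first term is at least $4\la$; this is the gain. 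For the remaining terms I would use $\abs{x}, \abs{y}, \sqrt{x^2+y^2} \le m$ and $0 < \phi_2 \le m^2+1$ on $B_m$ together with the smallness hypotheses: the second term is $\ge -2\la$ since $\norm{\gr a_{ij}}_{L^\iny\pr{B_m}} \le \frac{\la}{4m}$; the first-order term satisfies $-2\pr{W_1 x + W_2 y} \ge -2m\norm{W}_{L^\iny\pr{B_m}} \ge -\la$; the term $V_+\phi_2$ is $\ge 0$; and $-V_-\phi_2 \ge -\pr{m^2+1}\norm{V_-}_{L^\iny\pr{B_m}} \ge -\la$. Adding these four lower bounds to the gain $4\la$ gives $\mathcal{L}\phi_2 \ge 0$ on $B_m$, so $\phi_2$ is a positive supersolution.

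It then remains to verify the hypotheses of Lemma \ref{posMul0} and quote it. Combining \eqref{ellip} and \eqref{Abd} at $\xi = \zeta = e_1$ shows $\la \le 1$, and $m > 1$ by assumption, so $\norm{\gr a_{ij}}_{L^\iny\pr{B_m}} \le \frac{\la}{4m} \le 1 \le K$ and $\norm{W}_{L^\iny\pr{B_m}} \le \frac{\la}{2m} \le 1 \le K$, while $\norm{V}_{L^\iny\pr{B_m}} \le \norm{V_+}_{L^\iny\pr{B_m}} + \norm{V_-}_{L^\iny\pr{B_m}} \le K^2 + 1 \le 2K^2$; since the proof of Lemma \ref{posMul0} only uses the bound $\norm{V_+}_{L^\iny\pr{B_m}} \le K^2$ (or, if one prefers, after harmlessly replacing $K$ by $\sqrt2\,K$, which preserves all other inequalities), Lemma \ref{posMul0} applies and produces a positive continuous multiplier $\phi$ solving \eqref{ePDE} in $B_m$. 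I expect the only point requiring care to be the bookkeeping in the second paragraph: the three smallness constants $\frac{\la}{4m}$, $\frac{\la}{2m}$, $\frac{\la}{m^2+1}$ are calibrated precisely so that the three error contributions total exactly $4\la$, matching the ellipticity gain $2\pr{a_{11}+a_{22}} \ge 4\la$, and the harmless ``$+1$'' in $\phi_2$ is what keeps it bounded below by a positive constant up to $\del B_m$, so that the comparison with the exponential subsolution inside Lemma \ref{posMul0} goes through. There is no genuine obstacle beyond this.
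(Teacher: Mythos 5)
Your proposal is correct and follows essentially the same route as the paper: construct the explicit quadratic supersolution $\phi_2 = m^2+1-x^2-y^2$, verify $\mathcal{L}\phi_2\ge 0$ on $B_m$ by the same term-by-term estimate (ellipticity gain $2(a_{11}+a_{22})\ge 4\la$ absorbing the three error contributions of $2\la$, $\la$, $\la$), and then pass to a positive solution via Lemma \ref{posMul0}. The paper repeats part of the subsolution construction from Lemma \ref{posMul0} inline rather than quoting it as a black box, but this is cosmetic; your bookkeeping of $\norm{V}$ vs.\ $\norm{V_+}$ is harmless as you note.
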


\begin{proof}
Since $\frac \la {4m}, \sqrt{\frac \la {m^2 + 1}}, \frac \la {2m} \le K$ for some $K > 0$, then the first argument in Lemma \ref{posMul0} shows that there exists a positive subsolution $\phi_1$ with $\phi_1\pr{z} \le \exp\pr{c K}$ in $B_m$.
With $\phi_2 = \pr{m^2 + 1 - x^2 - y^2}$, we see that
\begin{align*}
\mathcal{L} \phi_2 
&= 2 \brac{\pr{\del_x a_{11}+ \del_y a_{21}}x + \pr{\del_x a_{12} + \del_y a_{22}} y + a_{11}+ a_{22}}  - 2 W \cdot \pr{x, y}  + V \pr{m^2 + 1 - x^2 - y^2} \\
&\ge 2\pr{a_{11} + a_{22}} 
- 2m \sum_{i,j=1,2}\norm{\gr a_{ij}}_{L^\iny\pr{Q_b}} 
- 2m \norm{W}_{L^\iny\pr{Q_b}}  
- \pr{m^2 + 1} \norm{V_-}_{L^\iny\pr{Q_b}} \\
&\ge 4\la - 8m \frac \la {4m} - 2m \frac \la {2m}  - \pr{m^2 + 1} \frac \la {m^2 + 1}
\ge 0.
\end{align*}
Therefore, $\phi_2$ is a supersolution.
Since $\exp\pr{c K} \phi_2 \ge \phi_1$ in $B_m$, then there exists a positive solution $\phi$ to \eqref{ePDE}.
\end{proof}

\begin{rem}
\label{rem2}
If $v$ is a solution to \eqref{ePDE} in $B_m$, then whenever $\al > 1$ and $\al r < m$, Theorem 8.32 from \cite{GT01}, for example, implies that $v$ satisfies the interior estimate
\begin{equation}
\norm{\gr v}_{L^\iny\pr{B_{r}}} \le \frac{C}{\pr{\al -1}r} \norm{v}_{L^\iny\pr{B_{\al r}}},
\label{intEst}
\end{equation}
where $C$ depends polynomially on $\la$ and $K$.
In particular, this estimate applies to both $u$ and $\phi$.
\end{rem}

Now we show that $\gr \pr{\log \phi}$ is bounded in $L^\iny$, an estimate that will be crucial to the arguments below.
We point out that this estimate holds for any positive solution that satisfies the listed hypotheses, not just those constructed in Lemma \ref{posMul}.

\begin{lem}
\label{grphiLem}
Recall that $1 < b < m$ are as defined in \eqref{bDef} and \eqref{mDef} for some $K \ge 1$.
Let $\phi$ be a positive solution to \eqref{ePDE} in $B_m$, where $A$ is a continuous matrix that satisfies \eqref{ellip} and \eqref{Abd}, $\norm{V}_{L^\iny\pr{B_m}} \le K^2$, and $\norm{W}_{L^\iny\pr{B_m}} \le K$.
Then there is a constant $C = C\pr{\la}$ for which
$$\norm{\gr \pr{\log \phi}}_{L^\iny\pr{B_b}} \le C K.$$
\end{lem}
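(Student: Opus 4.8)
The plan is to derive a local $L^\infty$ bound on $\gr(\log\phi)$ from a combination of interior gradient estimates and a Harnack-type oscillation control for the positive solution $\phi$. The key observation is that $w := \log\phi$ satisfies a uniformly elliptic equation with bounded lower-order coefficients: a direct computation gives
\begin{equation*}
-\di(A\gr w) + W\cdot\gr w - (A\gr w)\cdot\gr w + V = 0
\end{equation*}
in $B_m$, so $w$ solves $-\di(A\gr w) = (A\gr w)\cdot\gr w - W\cdot\gr w - V$. This is not quite linear because of the $|\gr w|^2$-type term, but the point is that $\phi$ itself is a positive solution of the \emph{linear} equation \eqref{ePDE}, so I would work with $\phi$ directly rather than with $w$.

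First I would invoke the Harnack inequality for positive solutions of \eqref{ePDE}. Since $A$ satisfies \eqref{ellip}, \eqref{Abd} and the zeroth- and first-order coefficients are controlled by $K^2$ and $K$ on $B_m$, the standard De Giorgi--Nash--Moser Harnack inequality (e.g. Theorem 8.20 in \cite{GT01}) gives, on any ball $B_{2r}(z)\subset B_m$,
\begin{equation*}
\sup_{B_r(z)}\phi \le C\,\inf_{B_r(z)}\phi,
\end{equation*}
where $C$ depends only on $\la$, $K$, and the radius --- and here is where the geometry matters: because $1<b<m$ with $m - b = F(K) \gtrsim K^{-1}$, the relevant radii are comparable to $K^{-1}$, which makes the Harnack constant depend polynomially (in fact exponentially in a controlled way) on $K$. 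I would need to track this dependence carefully, because the claimed bound is $CK$ with $C = C(\la)$, i.e. \emph{linear} in $K$. The cleanest route is a rescaling: set $\tilde\phi(z) = \phi(z_0 + K^{-1}z)$ for $z_0\in B_b$; then $\tilde\phi$ is a positive solution on $B_1(0)$ (say) of an equation whose coefficients have $L^\infty$ norms bounded by an \emph{absolute} constant (the $\gr a_{ij}$ term becomes $O(1)$, the first-order term $O(1)$, the zeroth-order term $O(1)$ after the $K^{-2}$ scaling). For this rescaled problem the interior gradient estimate \eqref{intEst} combined with the Harnack inequality gives $\|\gr(\log\tilde\phi)\|_{L^\infty(B_{1/2})}\le C(\la)$, and undoing the scaling multiplies the gradient by $K$, yielding $\|\gr(\log\phi)\|_{L^\infty(B_b)}\le C(\la)K$.

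So the steps in order are: (1) fix $z_0\in B_b$ and rescale $\phi$ at scale $K^{-1}$ to obtain a positive solution $\tilde\phi$ on a unit ball of an equation with absolutely-bounded coefficients --- here one must check that the shifted-and-scaled ball still lies inside $B_m$, which is exactly what $m = b + F(K)$ with $F(K)\gtrsim K^{-1}$ guarantees; (2) normalize $\tilde\phi$ by its value at the center (the equation is linear, so this is free) and apply the Harnack inequality to get $\tilde\phi \asymp 1$ on a fixed sub-ball with absolute constants; (3) apply the interior gradient estimate \eqref{intEst} (equivalently Remark \ref{rem2}, with the absolute constant version) to bound $\|\gr\tilde\phi\|_{L^\infty}$ on a slightly smaller ball by an absolute constant; (4) combine to bound $\|\gr\tilde\phi/\tilde\phi\|_{L^\infty} = \|\gr(\log\tilde\phi)\|_{L^\infty}$ by $C(\la)$; (5) undo the rescaling, picking up one factor of $K$, and let $z_0$ range over $B_b$. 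The main obstacle I anticipate is purely bookkeeping: making sure every constant that appears depends only on $\la$ and not on $K$, which forces one to do the rescaling before applying any of the elliptic machinery rather than after; the Harnack constant for the un-rescaled equation genuinely blows up with $K$, and only the scale-invariant formulation keeps it under control.
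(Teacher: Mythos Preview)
Your approach is correct and takes a genuinely different route from the paper's. The paper does not use Harnack plus a linear gradient estimate on $\phi$; instead it works directly with $\varphi := (CK)^{-1}\log\phi$, which satisfies the quasilinear equation
\[
\mu\,\di(A\gr\varphi) + A\gr\varphi\cdot\gr\varphi = \widetilde W\cdot\gr\varphi + \widetilde V,
\qquad \mu = (CK)^{-1},\ |\widetilde W|,|\widetilde V|\le 1.
\]
The heart of the paper's argument is a Caccioppoli-type energy bound $\int_{B_r(z)}|\gr\varphi|^2 \le C r^2$ for $r\in(\mu,(m-b)/4)$, obtained by testing against a cutoff and exploiting the good sign of the quadratic gradient term; from there the paper rescales at scale $\mu$, invokes Gehring--Giaquinta higher integrability to upgrade to $\gr\varphi_\mu\in L^p$ for some $p>2$, and then bootstraps via interior $W^{2,p}$ estimates (Theorem~9.9 in \cite{GT01}) and Sobolev embedding to reach the $L^\infty$ bound. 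Your route is shorter and entirely modular---two black-box linear estimates on the rescaled equation for $\phi$---whereas the paper's argument is more self-contained at the energy level and defers to linear regularity theory only at the final step; in particular, your approach makes the linear dependence on $K$ transparent from the rescaling alone, while the paper extracts it from the structure of the equation for $\varphi$. One small caveat: your parenthetical remark that ``the $\gr a_{ij}$ term becomes $O(1)$'' presupposes a Lipschitz bound on $A$, and indeed the gradient estimate \eqref{intEst} you invoke (like the paper's use of $W^{2,p}$ theory) tacitly requires this; that bound is present in the ambient hypotheses of Theorem~\ref{OofV} but is not literally listed in the lemma, so the dependence $C=C(\la)$ is to be read in that context in either proof.
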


\begin{proof}
Set $\vp = \frac{\log \phi}{C K}$ for some $C > 0$.
Then it follows from \eqref{ePDE} that in $B_m$ 
\begin{equation}
\mu \di\pr{A \gr \vp} + A \gr \vp \cdot \gr \vp = \widetilde W \cdot \gr \vp + \widetilde V,
\label{vpPDE}
\end{equation}
where $\mu = \frac{1}{C K}$, $\widetilde W = \frac{W}{CK}$ and $\widetilde V = \frac{V}{C^2 K^2}$.
The constant $C$ is chosen sufficiently large so that
\begin{align}
\mu < \frac {m - b} {4}, \quad
\norm{\widetilde W}_{L^\iny\pr{B_m}} \le 1, \quad 
\norm{\widetilde V}_{L^\iny\pr{B_m}} \le 1. 
\label{scaleBds}
\end{align}
The lower bound on $F\pr{K}$ and \eqref{mDef} ensure that the first condition may be satisfied.

\begin{clm}
For any $z \in B_b$, and $r \in \pr{\mu, \frac {m-b}{4}}$, if \eqref{vpPDE} and \eqref{scaleBds} hold, then there exists $C = C\pr{\la}$ such that
$$\int_{B_r\pr{z}} \abs{\gr \vp}^2 \le C r^2.$$
\label{clm1}
\end{clm}

\begin{proof}[Proof of Claim \ref{clm1}]
We use the abbreviated notation $B_r$ to denote $B_r\pr{z}$ for some $z \in B_b$.
Let $\eta \in C^\iny_0\pr{B_{2r}}$ be a cutoff function such that $\eta \equiv 1$ in $B_r$.
By the divergence theorem
\begin{align}
0 &= \mu \int \di \pr{A \gr \vp \, \eta^2} 
= \mu \int \di \pr{A \gr \vp} \eta^2 + 2\mu \int  A \gr \vp \cdot \gr \eta \eta.
\label{DivThmRes}
\end{align}
By \eqref{vpPDE} and \eqref{scaleBds},
\begin{align}
\int \mu \di \pr{A \gr \vp} \eta^2
&= - \int A \gr \vp \cdot \gr \vp \eta^2 + \int \widetilde W \cdot \gr \vp \eta^2 + \int \widetilde V \eta^2 \nonumber \\
&\le - \la \int \abs{\gr \vp}^2 \eta^2 
+ \frac \la 2 \int \abs{\gr \vp}^2 \eta^2
+ \frac 1 {2\la} \int \abs{\widetilde W}^2 \eta^2
+ \int \widetilde V \eta^2 \nonumber \\
&\le - \frac \la 2 \int \abs{\gr \vp}^2 \eta^2 + C r^2.
\label{IBd}
\end{align}
By Cauchy-Schwarz and Young's inequality,
\begin{align}
\abs{2 \mu \int \eta A \gr \vp \cdot \gr \eta} 
&\le 2 \mu \la^{-1} \pr{ \int \abs{\gr \vp}^2 \eta^2 }^{1/2} \pr{ \int \abs{\gr \eta}^2 }^{1/2} 
\le \frac{\la}{4} \int \abs{\gr \vp}^2 \eta^2 + C \mu^2.
\label{IIBd}
\end{align}
Combining \eqref{DivThmRes}-\eqref{IIBd} and using that $\mu < r$, we see that
\begin{align}
\int_{B_r} \abs{\gr \vp}^2 \le C \mu^2 + C r^2
\le C r^2,
\label{combinedEst}
\end{align}
proving the claim.
\end{proof}

We now use Claim \ref{clm1} to give a pointwise bound for $\gr \vp$ in $B_b$.
Define 
\begin{align*}
\vp_\mu\pr{z} = \frac{1}{\mu} \vp\pr{\mu z}, \qquad
A_\mu\pr{z} = A\pr{\mu z}, \qquad
L_\mu = \di A_\mu \gr.
\end{align*}
Then
\begin{align*}
&\gr \vp_\mu\pr{z} = \gr \vp \pr{\mu z} \\
&L_\mu \vp_\mu\pr{z} = \mu \di\pr{ A\pr{\mu z} \gr \vp\pr{\mu z}}.
\end{align*}
It follows from \eqref{vpPDE} that
\begin{align*}
L_\mu \vp_\mu\pr{z} + A_\mu \gr \vp_\mu \cdot \gr \vp_\mu
&= \mu \di\pr{ A\pr{\mu z} \gr \vp\pr{\mu z}}
+ A \pr{\mu z} \gr \vp \pr{\mu z} \cdot \gr \vp \pr{\mu z} \\
&= \widetilde V\pr{\mu z}
 + \widetilde W\pr{\mu z} \cdot \gr \vp\pr{\mu z}  \\
 &:= \widetilde V_\mu\pr{z}
 + \widetilde W_\mu\pr{z} \cdot \gr \vp_\mu\pr{z} ,
\end{align*}
where, from \eqref{scaleBds}, we have that
\begin{align*}
& \norm{\widetilde W_\mu}_{L^\iny\pr{B_1}} \le 1, \quad 
 \norm{\widetilde V_\mu}_{L^\iny\pr{B_1}} \le 1.
\end{align*}
Moreover,
\begin{align*}
\int_{B_{2}}\abs{\gr \vp\pr{\mu z}}^2 
= \frac{1}{\mu^2} \int_{B_{2 \mu}} \abs{\gr \vp}^2
\le \frac{1}{\mu^2} C\mu^2 = C,
\end{align*}
where we have used Claim \ref{clm1}.
Applications of Theorem 2.3 and Proposition 2.1 in Chapter V of \cite{Gi83} imply that there exists $p > 2$ such that
\begin{equation}
\norm{\gr \vp_\mu}_{L^p\pr{B_1}} \le C.
\label{QiaBd}
\end{equation}
Now we define
$$\tilde \vp_\mu \pr{z} = \vp_\mu\pr{z} - \frac{1}{\abs{B_1}} \int_{B_1} \vp_\mu.$$
Since $\gr \tilde \vp_\mu = \gr \vp_\mu$, then 
$$L_\mu \vp_\mu\pr{z} 
= - A_\mu \gr \vp_\mu \cdot \gr \vp_\mu + \widetilde V_\mu\pr{z} + \widetilde W_\mu\pr{z} \cdot \gr \vp_\mu\pr{z} : = \zeta \quad \text{ in } B_1.$$
Clearly, $\norm{\zeta}_{L^{p/2}\pr{B_1}} \le C$.
Moreover, by H\"older, Poincar\'e and \eqref{QiaBd},
$$\norm{\tilde \vp_\mu}_{L^{p/2}\pr{B_1}} 
\le C \norm{\tilde \vp_\mu}_{L^{p}\pr{B_1}} 
\le C \norm{\gr \tilde \vp_\mu}_{L^p\pr{B_1}} \le C.$$
By Theorem 9.9 from \cite{GT01}, for example, since $A$ is continuous,
$$\norm{\tilde \vp_\mu}_{W^{2, p/2}\pr{B_r}} \le C,$$
for any $r < 1$.
If $p > 4$, then it follows that $\norm{\gr \tilde \vp_\mu}_{L^\iny\pr{B_{r^\prime}}} \le C$.
Otherwise, assuming that $p < 4$, a Sobolev embedding shows that $\norm{\gr \tilde \vp_\mu}_{L^{\frac{2p}{4-p}}\pr{B_r}} \le C$.
Since $\frac{2p}{4-p} > p$, we may repeat these arguments to show that for some $r^\prime < 1$,
$$\norm{\gr \vp}_{L^\iny\pr{B_{ \mu r^\prime}}} =  \norm{\gr \vp_\mu}_{L^\iny\pr{B_{r^\prime}}} = \norm{\gr \tilde \vp_\mu}_{L^\iny\pr{B_{r^\prime}}} \le C.$$
This derivation works for any $z \in B_b$ and any $\mu < \mu_0$. 
Since $\vp = \frac{\log \phi}{C K}$, the conclusion of the lemma follows.
\end{proof}

We conclude this section by showing that all positive solutions to \eqref{ePDE} satisfy an important pointwise bound.

\begin{lem}
\label{phiBdLem}
Recall that $1 < b < m$ are as defined in \eqref{bDef} and \eqref{mDef} for some $K \ge 1$.
Let $\phi$ be a positive solution to \eqref{ePDE} in $B_m$, where $A$ is a continuous matrix that satisfies \eqref{ellip} and \eqref{Abd}, $\norm{V}_{L^\iny\pr{B_m}} \le K^2$, and $\norm{W}_{L^\iny\pr{B_m}} \le K$.
If $\phi$ is normalized so that $\phi\pr{0} = 1$, then 
\begin{equation}
\exp\pr{- c K} \le \phi\pr{z} \le \exp\pr{c K} \quad \text{ for a.e. } z \in B_b, 
\label{phiBounds}
\end{equation}
where $c = c\pr{\la, b}$.
\end{lem}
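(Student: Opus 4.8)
The plan is to deduce the pointwise bounds directly from the gradient estimate of Lemma \ref{grphiLem}. Since $\phi$ is positive and continuous in $B_m$, the function $\psi := \log \phi$ is well defined and continuous in $B_m$. The hypotheses imposed here on $A$, $V$, and $W$ are exactly those required by Lemma \ref{grphiLem}, so that lemma applies and furnishes a constant $C = C\pr{\la}$ with $\norm{\gr \psi}_{L^\iny\pr{B_b}} \le C K$. This is the only substantive input.

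Next I would promote this $L^\iny$ bound on the weak gradient to a Lipschitz estimate for $\psi$ on $B_b$. Because $B_b$ is convex and $\psi$ is continuous with $\gr \psi \in L^\iny\pr{B_b}$, a standard argument (mollify $\psi$, apply the fundamental theorem of calculus along segments, and pass to the limit) shows that $\psi$ is Lipschitz on $B_b$ with constant at most $C K$. For any $z \in B_b$, the segment joining $0$ to $z$ stays inside $B_b$, so integrating $\gr \psi$ along it yields $\abs{\psi\pr{z} - \psi\pr{0}} \le C K \abs{z} < C b K$. Since $\phi\pr{0} = 1$, we have $\psi\pr{0} = 0$, hence $\abs{\log \phi\pr{z}} < C b K$ for a.e.\ $z \in B_b$.

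Exponentiating gives $\exp\pr{- c K} \le \phi\pr{z} \le \exp\pr{c K}$ for a.e.\ $z \in B_b$ with $c = C b$, which depends only on $\la$ and $b$, as claimed. The only point requiring any care is the passage from the $L^\iny$ bound on $\gr \psi$ to the segment-wise estimate, which is handled by the convexity of the ball together with the continuity of $\phi$; I do not anticipate a genuine obstacle, since the essential analytic work — the derivation of the $L^\iny$ gradient bound — was already carried out in Lemma \ref{grphiLem}.
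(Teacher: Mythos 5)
Your proposal is correct and follows essentially the same route as the paper: invoke Lemma \ref{grphiLem} to get $\norm{\gr\pr{\log\phi}}_{L^\iny\pr{B_b}} \le CK$, then integrate along the segment from $0$ to $z$ using $\phi\pr{0}=1$ to obtain $\abs{\log\phi\pr{z}} \le CKb$ and exponentiate. The extra care you take in promoting the $L^\iny$ gradient bound to a Lipschitz estimate is sound but is left implicit in the paper.
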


\begin{proof}
Assuming that $\phi\pr{0} = 1$, Lemma \ref{grphiLem} implies that for a.e. $z \in B_{b}$,
\begin{align*}
\abs{\log \phi\pr{z}} 
&= \abs{\log \phi\pr{z} - \log \phi\pr{0}} 
\le \norm{\gr \pr{\log \phi}}_{L^\iny\pr{B_{b}}} \abs{z}
\le C K b,
\end{align*}
and the claimed result follows.
\end{proof}

\section{Fundamental solutions and quasi-balls}
\label{quasi}

Given that we are working with variable-coefficient operators instead of the Laplacian, we no longer have the Hadamard three-circle theorem available to us as it was in \cite{KSW15}.
When we generalize the Hadamard three-circle theorem to the variable coefficient setting, we need to introduce so-called {\em quasi-balls}.
These sets were first introduced in \cite{DKW17}, and used again in \cite{DW17}.
Here we repeat a number of the definitions and facts that we previously produced.
We also use a perturbation argument to establish some new properties of quasi-balls associated to operators with Lipschitz continuous coefficients.

Let $L := - \di\pr{A \gr}$ be a second-order divergence form operator acting on $\R^2$ that satisfies the ellipticity and boundedness conditions described by \eqref{ellip} and \eqref{Abd}.
We start by discussing the fundamental solutions of $L$.
These results are based on the Appendix of \cite{KN85}.

\begin{defn}
A function $\Ga$ is called a {\em fundamental solution} for $L$ with pole at the origin if $\Ga \in H^{1,2}_{loc}\pr{\R^2 \setminus \set{0}}$, $\Ga \in H^{1,p}_{loc}\pr{\R^2}$ for all $p < 2$, and for every $\vp \in C^\iny_0\pr{\R^2}$
$$\int a_{ij}\pr{z} D_i \Ga\pr{z} \,  D_j \vp\pr{z} dz = \vp\pr{0}.$$
Moreover, $\abs{\Ga\pr{z} } \le C \log \abs{z}$, for some $C > 0$, {$|z|\ge C$}.
\end{defn}

\begin{lem}[Theorem A-2, \cite{KN85}]
There exists a unique fundamental solution $\Ga$ for $L$, with pole at the origin and with the property that $\disp \lim_{\abs{z} \to \iny} \Ga\pr{z} - g\pr{z} = 0$, where $g$ is a solution to $L g = 0$ in $\abs{z} > 1$ with $g = 0$ on $\abs{z} = 1$.
Moreover, there are constants $C_1, C_2, C_3, C_2, R_1 < 1 < R_2$ that depend on $\la$ such that
\begin{align*}
&C_1 \log\pr{\frac{1}{\abs{z}}} \le \Ga\pr{z} \le C_2 \log \pr{\frac{1}{\abs{z}}} \;\; \text{ for } \abs{z} < R_1 \\
& C_3 \log\abs{z} \le - \Ga\pr{z} \le C_2 \log \abs{z} \;\; \text{ for } \abs{z} > R_2.
\end{align*}
\label{fundSolBds}
\end{lem}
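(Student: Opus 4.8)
The plan is to reproduce the Littman--Stampacchia--Weinberger construction of Green's functions, adapted to two dimensions, where the fundamental solution grows logarithmically and the naive approximants diverge, and then to pin down the normalization and uniqueness by maximum-principle arguments. The only analytic input is the De Giorgi--Nash--Moser theory for operators satisfying \eqref{ellip}--\eqref{Abd}: local H\"older continuity, the interior Harnack inequality, and the fact that points have zero $H^{1,2}$-capacity in the plane, so that bounded solutions extend across them. All constants below depend only on $\la$.

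First I would construct, for each $R > 1$, the Green's function $G_R$ of $L$ in $B_R$ with pole at $0$: the unique $G_R$ with $L G_R = \de_0$ in $B_R$ and $G_R = 0$ on $\del B_R$. One solves $L G_R^\eps = \eta_\eps$ in $H^{1,2}_0\pr{B_R}$ for a mollification $\eta_\eps$ of $\de_0$ via Lax--Milgram, and lets $\eps \to 0$, using the De Giorgi--Nash estimates for uniform $H^{1,p}_{loc}$ bounds ($p<2$) and local H\"older continuity away from the pole; the maximum principle gives $G_R > 0$. Next come the two-sided logarithmic bounds: since $G_R$ is a positive $L$-solution in $B_R \setminus \set{0}$, the interior Harnack inequality bounds the oscillation of $\log G_R$ across any single dyadic annulus $\set{2^{-k-1} \le \abs{z} \le 2^{-k}}$, and telescoping a Harnack chain from $\abs{z} = R$ inward --- together with a barrier (capacity) comparison bounding the per-annulus decrement from below --- yields $c \log\pr{R/\abs{z}} \le G_R\pr{z} \le C \log\pr{R/\abs{z}}$ on $\abs{z} \le R/2$.

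Because $G_R \to \iny$ pointwise as $R \to \iny$, I would renormalize rather than pass to the limit directly: set $\widetilde{G}_R\pr{z} := G_R\pr{z} - \frac{1}{2\pi}\int_{\abs{z}=1} G_R$. The oscillation estimates above, combined with interior De Giorgi--Nash bounds, make $\set{\widetilde{G}_R}$ equibounded and equicontinuous on compact subsets of $\R^2 \setminus \set{0}$ and equibounded in $H^{1,p}_{loc}\pr{\R^2}$ for $p<2$. Arzel\`a--Ascoli and weak compactness then produce a subsequence converging, locally uniformly and weakly in $H^{1,p}_{loc}$, to a function $\Ga$ with $L\Ga = \de_0$ in $\R^2$, and the logarithmic bounds pass to the limit: $C_1\log\pr{1/\abs{z}} \le \Ga\pr{z} \le C_2\log\pr{1/\abs{z}}$ near $0$ and $C_3\log\abs{z} \le -\Ga\pr{z} \le C_2\log\abs{z}$ for large $\abs{z}$. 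In particular $\abs{\Ga\pr{z}} \le C\log\abs{z}$ for large $\abs{z}$, while $\gr\Ga \sim \abs{z}^{-1}$ near $0$ lies in $L^p_{loc}$ for $p<2$, so $\Ga$ is a fundamental solution in the stated sense.

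It remains to normalize and prove uniqueness. Fixing $g$ to be the solution of $Lg = 0$ in $\abs{z}>1$ with $g = 0$ on $\abs{z} = 1$, obtained as a limit of solutions on annuli and normalized by its flux about the pole so as to match the leading logarithmic term of $\Ga$, the difference $\Ga - g$ is $L$-harmonic and bounded in $\abs{z}>1$; the conformal inversion $w = 1/z$ turns it into a bounded $L$-solution on a punctured disk, which extends across the puncture, so $\Ga - g$ has a finite limit $\ell$ at infinity. As $L$ has no zeroth-order term, constants are $L$-solutions, so $\Ga - \ell$ is still a fundamental solution, and now $\pr{\Ga - \ell} - g \to 0$. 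This also gives uniqueness: if $\Ga_1, \Ga_2$ both satisfy the conclusion, then $h := \Ga_1 - \Ga_2$ has $\int a_{ij} D_i h\, D_j\vp = 0$ for all $\vp \in C^\iny_0\pr{\R^2}$, hence is $L$-harmonic on $\R^2$ by elliptic regularity, and since $h \to 0$ at infinity the maximum principle on large balls forces $h \equiv 0$. I expect the logarithmic bounds --- the second step --- to be the main obstacle: the Harnack inequality readily controls the oscillation of $\log G_R$ over one dyadic annulus, but the matching lower bound on the decrement, and the propagation of purely $\la$-dependent constants through the renormalization and the limit, require care; the normalization and uniqueness steps are then routine.
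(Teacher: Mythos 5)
This lemma is quoted verbatim from Kenig--Ni (Theorem A-2 of \cite{KN85}); the paper gives no proof of its own and simply cites it, so there is no internal argument to compare against. Your sketch is a plausible reconstruction of the Littman--Stampacchia--Weinberger-type construction that underlies the cited result, and the overall architecture --- approximating Green's functions $G_R$ of $B_R$, two-sided logarithmic bounds, renormalization and compactness, a conformal-inversion/removable-singularity argument for the normalization $\Ga - g \to 0$, and a maximum-principle argument for uniqueness --- is sound and matches the classical route.

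One step deserves a sharper justification than you give it. You assert that $\set{\widetilde G_R}$ is equibounded on compact subsets of $\R^2 \setminus \set{0}$, but the Harnack inequality alone does not deliver this: Harnack controls the ratio $\sup / \inf$ of $G_R$ over a fixed annulus, which leaves an additive oscillation that a priori could grow like $\log R$, and that would survive subtracting the spherical average. What actually closes the gap is the De Giorgi--Nash interior oscillation decay applied to the increments $G_{R'} - G_R$ for $R' \sim 2R$: each increment is a nonnegative $L$-solution in $B_R$ of size $O\pr{1}$ there (since on $\del B_R$ it equals $G_{R'}$, which is comparable to $\log\pr{R'/R}$), whence $\osc_{B_1}\pr{G_{2^{n+1}} - G_{2^n}} \lesssim 2^{-n\al}$ is summable and $\widetilde G_{2^n}$ is Cauchy on compacts. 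You gesture at this when you invoke the ``interior De Giorgi--Nash bounds,'' but it is this geometric decay of oscillation --- not Harnack --- that makes the renormalization work, and it is the least routine part of the construction, so it is worth spelling out.
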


The level sets of $\Ga$ will be important to us.

\begin{defn}
\label{quasiDef}
Define a function $\ell: \R^2 \to \pr{0, \iny}$ as follows: $\ell\pr{z} = s$ iff $\Ga\pr{z} = - \frac 1 {2\pi} \log s$.
Then set 
\begin{align*}
Z_s &= \set{ z \in \R^2 : \Ga\pr{z} = - \frac 1 {2\pi}\log s} 
= \set{ z \in \R^2 : \ell\pr{z} = s} .
\end{align*}
We refer to these level sets of $\Ga$ as {\em quasi-circles.}
That is, $Z_s$ is the quasi-circle of radius $s$.
We also define (closed) {\em quasi-balls} as
\begin{align*}
Q_s &= \set{ z \in \R^2 : \ell\pr{z} \le s} .
\end{align*}
Open {\em quasi-balls} are defined analogously.
We may use the notation $Q_s^L$ and $Z_s^L$ to remind ourselves of the underlying operator.
\end{defn}

Although this is not the simplest way to define these sets (compare to our definitions in \cite{DKW17} and \cite{DW17}), here we introduced a scaling so that if $L = - \LP$, then $Q_s = B_s$.
This will be helpful below when derive more precise estimates for quasi-balls using perturbation arguments.

The following lemma follows from the bounds given in Lemma \ref{fundSolBds}.
The details of the proof may be found in \cite{DKW17}.

\begin{lem}
There are constants $c_1, c_2, c_3, c_4, c_5, c_6, S_1 < 1 < S_2$, that depend on $\la$, such that if $z \in Z_s$, then
\begin{align*}
& s^{c_1} \le \abs{z} \le s^{c_2}  \;\; \text{ for } s \le S_1 \\
& c_5 s^{c_1} \le \abs{z} \le c_6 s^{c_4} \;\; \text{ for } S_1 < s < S_2 \\
& s^{c_3} \le \abs{z} \le s^{c_4} \;\; \text{ for } s \ge S_2.
\end{align*}
\label{ZsBounds}
\end{lem}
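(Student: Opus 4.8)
The statement to prove is Lemma \ref{ZsBounds}, which translates the logarithmic bounds on the fundamental solution $\Ga$ (from Lemma \ref{fundSolBds}) into two-sided polynomial bounds $|z| \asymp s^{c}$ on the quasi-circles $Z_s$.

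The plan is as follows. Recall from Definition \ref{quasiDef} that $z \in Z_s$ means precisely $\Ga(z) = -\frac{1}{2\pi}\log s$. So the whole lemma amounts to inverting the inequalities of Lemma \ref{fundSolBds}, which relate $\Ga(z)$ to $\log|z|$, into inequalities relating $\log|z|$ to $\log s$. First I would handle the two extreme regimes. Suppose $s$ is small. Then $-\frac{1}{2\pi}\log s$ is large and positive, so $\Ga(z)$ is large and positive; by the continuity of $\Ga$ and the bound $|\Ga(z)| \le C\log|z|$ for $|z| \ge C$, together with $\Ga$ being negative for $|z| > R_2$, this forces $|z| < R_1$ once $s$ is below a threshold $S_1$. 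In that regime Lemma \ref{fundSolBds} gives $C_1\log(1/|z|) \le -\frac{1}{2\pi}\log s \le C_2 \log(1/|z|)$, i.e. $\frac{1}{2\pi C_2}\log(1/s) \le \log(1/|z|) \le \frac{1}{2\pi C_1}\log(1/s)$; exponentiating yields $s^{c_2} \le |z| \le s^{c_1}$ with $c_1 = \frac{1}{2\pi C_1}$, $c_2 = \frac{1}{2\pi C_2}$ (noting $\log(1/s) > 0$ so the direction of inequalities is preserved). Symmetrically, for $s$ large, $-\frac{1}{2\pi}\log s$ is large and negative, forcing $|z| > R_2$, and the bound $C_3\log|z| \le -\Ga(z) \le C_2\log|z|$ gives $s^{c_3} \le |z| \le s^{c_4}$ with $c_3 = \frac{1}{2\pi C_2}$, $c_4 = \frac{1}{2\pi C_3}$.

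Next I would handle the intermediate band $S_1 < s < S_2$. Here I cannot use the logarithmic bounds of Lemma \ref{fundSolBds} directly since they are only asserted near $0$ and near $\iny$. Instead, the key point is that $Z_s$ for $s$ in the compact interval $[S_1, S_2]$ is a compact region of $\R^2$: since $\Ga$ is continuous, $\Ga(z) \to -\iny$ as $|z| \to \iny$ and $\Ga(z) \to +\iny$ as $|z| \to 0$, each level $-\frac{1}{2\pi}\log s$ with $s \in [S_1, S_2]$ is attained only on a set bounded away from both $0$ and $\iny$. Concretely, $\Ga$ is strictly positive on $Q_{S_1}^c \cap \{|z| \le R_1\}$... rather, the cleanest argument: let $M_1 = \inf\{|z| : z \in Z_s, \ S_1 \le s \le S_2\}$ and $M_2 = \sup\{|z| : z \in Z_s,\ S_1 \le s \le S_2\}$; using that $\{z : S_1 \le \ell(z) \le S_2\}$ is closed and contained in $\{R_1' \le |z| \le R_2'\}$ for suitable $0 < R_1' < R_2' < \iny$ (which follows because $\ell$ restricted to $|z| < R_1$ takes values $< S_1$ and restricted to $|z| > R_2$ takes values $> S_2$, by Lemma \ref{fundSolBds} after possibly shrinking $S_1$ and enlarging $S_2$), we get $0 < M_1 \le |z| \le M_2 < \iny$ uniformly for all such $z$. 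Then I simply choose the constants $c_5, c_6$ so that $c_5 s^{c_1} \le M_1$ and $M_2 \le c_6 s^{c_4}$ hold for every $s \in [S_1, S_2]$ — this is possible because $s^{c_1}$ and $s^{c_4}$ are bounded above and below by positive constants on this compact interval, so one takes e.g. $c_5 = M_1 / \max_{s \in [S_1,S_2]} s^{c_1}$ and $c_6 = M_2 / \min_{s \in [S_1,S_2]} s^{c_4}$.

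I expect the main obstacle to be the intermediate regime: all the care there goes into justifying that $\{z : S_1 \le \ell(z) \le S_2\}$ is bounded and bounded away from the origin, which requires knowing the global behavior of $\Ga$ (monotone-in-modulus blowup at $0$ and decay to $-\iny$ at $\iny$) rather than just the two local estimates of Lemma \ref{fundSolBds}. One clean way to get this is to first note that, after possibly decreasing $S_1$ and increasing $S_2$, the level set $Z_{S_1}$ lies entirely in $\{|z| < R_1\}$ and $Z_{S_2}$ lies entirely in $\{|z| > R_2\}$ (immediate from Lemma \ref{fundSolBds} by choosing $S_1 < R_1^{1/c_2}$ and $S_2 > R_2^{1/c_3}$ say), and then invoke continuity of $\Ga$ plus the intermediate value theorem along rays, together with the fact that $\Ga$ has no local extrema in the annulus $R_1 \le |z| \le R_2$ other than being squeezed between the monotone tails — actually it suffices to observe that $\{\ell \le S_2\}$ is bounded (since $\ell > S_2$ outside a large ball) and $\{\ell \ge S_1\}$ omits a neighborhood of $0$, so their intersection is compact and, being contained in $\R^2 \setminus \{0\}$, is bounded away from $0$. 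The rest is the elementary exponentiation bookkeeping sketched above. Beyond that, I would double-check the direction of all inequalities: when $s < 1$ we have $\log(1/s) > 0$ and when $s > 1$ we have $\log s > 0$, so dividing by the positive constants $C_i$ and exponentiating is monotone in the expected way, giving exactly the six constants and three regimes as stated.
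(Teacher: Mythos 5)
Your proof is essentially correct and follows the natural route: invert the two-sided logarithmic bounds on $\Ga$ from Lemma \ref{fundSolBds} in the two extreme regimes $s\le S_1$ and $s\ge S_2$, and use compactness of $\set{z : S_1\le\ell(z)\le S_2}$ (which you correctly deduce is bounded away from $0$ and $\iny$ because $\Ga(z)\to+\iny$ as $|z|\to 0$ and $\Ga(z)\to-\iny$ as $|z|\to\iny$) to handle the intermediate band. The paper itself gives no proof of this lemma but simply cites \cite{DKW17}, so there is nothing in the text to compare against; your argument is the one a reader would expect to find there. One transcription slip to fix: after correctly arriving at $\frac{1}{2\pi C_2}\log(1/s)\le\log(1/|z|)\le\frac{1}{2\pi C_1}\log(1/s)$, exponentiating and taking reciprocals gives $s^{1/(2\pi C_1)}\le|z|\le s^{1/(2\pi C_2)}$, i.e.\ $s^{c_1}\le|z|\le s^{c_2}$ with $c_1=\frac{1}{2\pi C_1}$ and $c_2=\frac{1}{2\pi C_2}$ — you wrote the two bounds in the reverse order, which also contradicts your own (correct) observation that $c_1\ge c_2$ since $C_1\le C_2$. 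This does not affect the substance of the argument.
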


Thus, the quasi-circle $Z_s$ is contained in an annulus whose inner and outer radii depend on $s$ and $\la$.
For future reference, it will be helpful to have a notation for the bounds on these inner and outer radii.
If we define
\begin{align*}
& \si\pr{s; L} = \sup\set{ r > 0 : B_r \su Q_s^L }, \quad \rho\pr{s; L} = \inf \set{r > 0 : Q_s^L \su B_r },
\end{align*}
then $\si\pr{s; L} \le \abs{z} \le \rho\pr{s;L}$ for all $z \in Z_s^L$.

\begin{rem}
Note that these definitions of $\si$ and $\rho$ differ from those that we introduced in \cite{DKW17} and \cite{DW17}.
In those papers, the bounds were established over a collection of operators with common ellipticity and boundedness conditions, whereas we are now defining them for a single operator.
That we are able to reduce from a class of operators to a single one is somewhat technical, but hinges on the simplifying assumptions on $A$.
\end{rem}

Since $\Ga = \Ga\pr{z} = \Ga\pr{z, 0}$ is defined to be a fundamental solution with a pole at the origin, the quasi-balls and quasi-circles just defined above are centered at the origin, so we may sometimes use the notation $Z_s\pr{0}$ and $Q_s\pr{0}$.
If we follow the same process for any point $z_0 \in \R^2$, we may discuss the fundamental solutions with pole at $z_0$, $\Ga\pr{z, z_0}$, and we may similarly define the quasi-circles and quasi-balls associated to these functions.
We denote the quasi-circle and quasi-ball of radius $s$ centered at $z_0$ by $Z_s\pr{z_0}$ and $Q_s\pr{z_0}$, respectively.

As we are working with operators whose coefficients are Lipschitz continuous, we can make a comparison between the quasi-balls of these operators and those of constant coefficient operators.
Let $A_0 = A\pr{0}$, a real, symmetric, and positive-definite matrix that is diagonalizable.
Thus, there exists an orthogonal matrix $O$ and a diagonal matrix $D$ so that 
$$A_0 = O^T D O = \brac{\begin{array}{cc} \cos \te & - \sin \te \\ \sin\te & \cos \te \end{array}} \brac{\begin{array}{cc} d_1 & 0 \\ 0 & d_2 \end{array}} \brac{\begin{array}{cc} \cos \te & \sin \te \\ -\sin\te & \cos \te \end{array}}.$$
The fundamental solution associated to the operator $L_0 := - \di\pr{A_0 \gr}$ is given by
\begin{equation}
\label{Ga0Def}
\Ga_0\pr{x, y} = -\frac 1 {2p} \log \brac{\frac{\pr{x \cos \te + y \sin \te}^2}{d_1} + \frac{\pr{- x \sin \te + y \cos \te}^2}{d_2}},
\end{equation}
where $p$ is the perimeter of the ellipse associated to $A_0$, and therefore depends on $d_1$ and $d_2$.

\begin{lem}
\label{FSClose}
Let the coefficient matrix $A$ satisfy \eqref{ellip} and \eqref{Abd}, and assume that $A$ is symmetric and Lipschitz continuous with $\abs{\gr a_{ij}\pr{z}} \le \de$ in $B_m$.
Let $\Ga$ denote the fundamental solution associated to $L := - \di\pr{A \gr}$, while $\Ga_0$ is the fundamental solution for $L_0$. 
Then there exists a constant $C = C\pr{\la, m}$ so that for any $z \in B_m$,
\begin{align*}
\abs{\Ga_0\pr{z} - \Ga\pr{z}} \le C \de.
\end{align*}
\end{lem}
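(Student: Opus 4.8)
The plan is to compare $\Ga$ and $\Ga_0$ by viewing their difference as a solution to an elliptic equation whose right-hand side is controlled by $\de$, and then to invoke standard elliptic estimates together with the a priori logarithmic bounds on $\Ga$ and $\Ga_0$ from Lemma \ref{fundSolBds}. First I would record that both $\Ga$ and $\Ga_0$ are fundamental solutions with pole at the origin, normalized so that they agree (to leading order) with the logarithm at infinity; in particular $w := \Ga - \Ga_0$ has no pole — the singular parts cancel — so $w \in H^{1,2}_{loc}\pr{\R^2}$, and by the bounds in Lemma \ref{fundSolBds} applied to each of $\Ga$, $\Ga_0$ we get $\abs{w\pr{z}} \le C\pr{\la, m}$ on, say, $B_{2m}$ away from the origin, and in fact on all of $B_{2m}$ once the singularities are seen to cancel.

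The key computation is that $w$ satisfies $-\di\pr{A \gr w} = \di\pr{\pr{A - A_0} \gr \Ga_0}$ in $B_{2m} \setminus \set{0}$, and since the singular parts cancel this holds across the origin as well, in the weak sense. The source term is $\di$ of the vector field $g := \pr{A - A_0}\gr \Ga_0$. Now $\abs{A\pr{z} - A_0} = \abs{A\pr{z} - A\pr{0}} \le \de \abs{z} \le \de \, 2m$ on $B_{2m}$ by the Lipschitz bound \eqref{Lips} (here written with constant $\de$), while $\abs{\gr \Ga_0\pr{z}} \le C\pr{\la}/\abs{z}$ from the explicit formula \eqref{Ga0Def}. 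Hence $\abs{g\pr{z}} \le C\pr{\la, m}\, \de$ pointwise on $B_{2m} \setminus \set{0}$, so $g \in L^\iny\pr{B_{2m}} \su L^q\pr{B_{2m}}$ for every $q < \iny$ with $\norm{g}_{L^q\pr{B_{2m}}} \le C\pr{\la, m, q}\, \de$.

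From here I would apply the $W^{1,q}$ (or Hölder) estimate for the divergence-form equation $-\di\pr{A \gr w} = \di g$ with $L^\iny$ right-hand-side data — e.g. Theorem 8.32 or the De Giorgi–Nash–Moser/Stampacchia estimates, using only that $A$ is bounded and elliptic — to conclude that $\norm{w}_{L^\iny\pr{B_m}} \le C\pr{\la, m}\pr{\norm{w}_{L^2\pr{B_{2m}}} + \norm{g}_{L^q\pr{B_{2m}}}}$. The first term on the right is $\le C\pr{\la, m}$ a priori, which is not yet proportional to $\de$; to fix this I would instead apply the estimate to $w$ directly exploiting that $w$ itself solves a homogeneous-type equation with the small source, i.e. use that $-\di\pr{A\gr w} = \di g$ has a maximum-principle/energy bound $\norm{w}_{L^\iny\pr{B_m}} \le \norm{w}_{L^\iny\pr{\del B_{3m/2}}} + C\norm{g}_{L^q}$, and then observe that $\norm{w}_{L^\iny\pr{\del B_{3m/2}}}$ is itself controlled by $C\de$ because it is the difference of two fundamental solutions each within $C\de$ of $-\frac{1}{2\pi}\log\abs{\cdot}$ in a uniform-ellipticity annulus — this last point follows by running the same argument on the exterior region, or by noting both $\Ga$ and $\Ga_0$ tend to the same logarithmic profile. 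The main obstacle is precisely this bootstrapping of the constant down from $O(1)$ to $O(\de)$: one must be careful that the comparison curve (the pure logarithm, or $\Ga_0$) genuinely approximates $\Ga$ to order $\de$ uniformly, and the cleanest route is to absorb the $\de$-smallness through the source term $g$ while handling the matching condition at infinity via the normalization in Lemma \ref{fundSolBds}, iterating the interior estimate on a nested sequence of balls if needed so that boundary data and source term are each $O(\de)$.
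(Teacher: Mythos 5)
Your setup is exactly right and matches the paper's: you identify that $w := \Ga - \Ga_0$ solves $-\di\pr{A\gr w} = \di g$ with $g = \pr{A - A_0}\gr\Ga_0$, and you correctly observe that the Lipschitz bound plus $\abs{\gr\Ga_0} \lesssim 1/\abs{z}$ gives $\norm{g}_{L^\iny\pr{B_m}} \lesssim \de$, so the singular source is genuinely $O\pr{\de}$ pointwise. This is the same initial decomposition the paper uses.

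The difficulty is in the step you yourself flag as ``the main obstacle'': converting this into an $O\pr{\de}$ bound for $w$ itself. Your proposed elliptic estimate $\norm{w}_{L^\iny\pr{B_m}} \le \norm{w}_{L^\iny\pr{\del B_{3m/2}}} + C\norm{g}_{L^q}$ merely transfers the problem to bounding the boundary term by $C\de$, and none of your suggested fixes close this. The a priori bounds from Lemma \ref{fundSolBds} give only an $O\pr{1}$ bound on $\abs{w}$ on the boundary; both $\Ga$ and $\Ga_0$ are tied to the same logarithmic profile only ``at infinity,'' and the smallness of $\gr A$ is hypothesized only inside $B_m$, so you cannot propagate the $\de$-closeness from infinity into $\del B_{3m/2}$ by running the same interior argument on an exterior region. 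Iterating on nested balls inside $B_m$ similarly starts from $O\pr{1}$ boundary data and never shrinks the constant below $O\pr{1}$ --- each ball still needs its outer data to be small. This is a real circularity, not a technicality.

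The paper resolves it by invoking a representation formula for the difference of fundamental solutions (from \cite{DW17} and \cite{DHM16}), namely
$\Ga_0\pr{z} - \Ga\pr{z} = \int_{B_m} \gr_\zeta \Ga\pr{z,\zeta}\cdot\pr{A\pr{\zeta}-A_0}\gr\Ga_0\pr{\zeta}\,d\zeta$, and then applies H\"older together with the $L^1$ bound $\norm{D\Ga\pr{z,\cdot}}_{L^1\pr{B_{2m}\pr{z}}} \le C$ (\cite{DW17}, Theorem~10). The representation formula encodes the normalization at infinity built into the definition of the fundamental solution, so no boundary term appears; the whole estimate reduces to $\norm{g}_{L^\iny}$ times a fixed $L^1$ kernel bound, immediately yielding $O\pr{\de}$. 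To make your argument rigorous you would need to replace your maximum-principle step with this kind of Green's-function representation (or an equivalent device that exploits the pole-matching normalization rather than boundary data).
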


\begin{proof}
Observe that
\begin{align*}
L\pr{\Ga_0 - \Ga}
&= -\di \brac{\pr{A - A_0} \gr \Ga_0} -\di \pr{A_0 \gr \Ga_0} + \di \pr{A \gr \Ga}
= -\di \brac{\pr{A - A_0} \gr \Ga_0}.
\end{align*}
Since $\abs{\gr \Ga_0\pr{z}} \le \frac{C}{\abs{z}}$ and $\abs{A\pr{z} - A_0} \le \de \abs{z}$, then $\norm{ \pr{A - A_0} \,\gr \Ga_0}_{L^\iny\pr{B_m}} \le C\de$.
It follows from a modification to the arguments in \cite{DW17} and \cite{DHM16} (see Definition 5 and Theorem 10 in \cite{DW17})  that
\begin{align*}
\Ga_0\pr{z} - \Ga\pr{z}
&= \int_{B_m} \gr_\zeta \Ga\pr{z, \zeta} \cdot \pr{A\pr{\zeta} - A_0} \, \gr \Ga_0\pr{\zeta} d\zeta
\end{align*}
and therefore
\begin{align*}
\sup_{z \in B_m} \abs{\Ga_0\pr{z} - \Ga\pr{z}}
&= \sup_{z \in B_m} \abs{\int_{B_m} \gr_\zeta \Ga\pr{z, \zeta} \cdot \pr{A\pr{\zeta} - A_0} \, \gr \Ga_0\pr{\zeta} d\zeta}.
\end{align*}
By H\"older's inequality, if $z \in B_m$, then
\begin{align*}
\abs{\int_{B_m} \gr_\zeta \Ga\pr{z, \zeta} \cdot \pr{A\pr{\zeta} - A_0} \, \gr \Ga_0\pr{\zeta} d\zeta}
&\le \norm{ \pr{A - A_0} \,\gr \Ga_0}_{L^\iny\pr{B_m}} \int_{B_m} \abs{D \Ga\pr{z, \zeta}} d\zeta \\
&\le C\de \norm{D \Ga\pr{z,\cdot}}_{L^1\pr{B_{2m}\pr{z}}}.
\end{align*}
It follows from a modification to the arguments that prove (B.18) from Theorem 10 in \cite{DW17} that $\norm{D \Ga\pr{z,\cdot}}_{L^1\pr{B_{2m}\pr{z}}} \le C$. 
The conclusion of the lemma follows.
\end{proof}

\begin{lem}
\label{rsClose}
Let the coefficient matrix $A$ satisfy \eqref{ellip} and \eqref{Abd}, and assume that $A$ is symmetric and Lipschitz continuous with $\abs{\gr a_{ij}\pr{z}} \le \de$ in $B_m$.
There exists a constant $C = C\pr{\la,m}$ so that for any $s > 0$, 
$$\rho\pr{s; L}\le \la^{-1/2} s^{p/2\pi} \exp\pr{C \de} \quad \text{and} \quad \si\pr{s; L} \ge \la^{1/2}s^{p/2\pi} \exp\pr{-C \de}.$$
\end{lem}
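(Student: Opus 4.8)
The plan is to reduce everything to the constant-coefficient operator $L_0 = -\di\pr{A_0\gr}$ with $A_0 = A\pr{0}$, whose quasi-balls are explicit from \eqref{Ga0Def}, and then to use Lemma \ref{FSClose} to pass from $L_0$ back to $L$.

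First I would write down the quasi-balls of $L_0$. From \eqref{Ga0Def}, after rotating to diagonalize $A_0 = O^T D O$ (which is legitimate since $A$, hence $A_0$, is symmetric), the level set $\set{\Ga_0 = -\frac{1}{2\pi}\log s}$ is an ellipse centered at the origin with semi-axes $\sqrt{d_1}\,s^{p/2\pi}$ and $\sqrt{d_2}\,s^{p/2\pi}$, where $d_1, d_2$ are the eigenvalues of $A_0$ appearing in \eqref{Ga0Def}. Since rotations preserve distance to the origin, this yields $\si\pr{s; L_0} = \min\set{\sqrt{d_1}, \sqrt{d_2}}\,s^{p/2\pi}$ and $\rho\pr{s; L_0} = \max\set{\sqrt{d_1}, \sqrt{d_2}}\,s^{p/2\pi}$. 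Evaluating \eqref{ellip} and \eqref{Abd} at $z = 0$ gives $\la \le d_1, d_2 \le \la^{-1}$, and consequently $p \le C\pr{\la}$ as well (it is the perimeter of an ellipse with semi-axes in $\brac{\la^{1/2}, \la^{-1/2}}$). Therefore
$$\la^{1/2} s^{p/2\pi} \le \si\pr{s; L_0}, \qquad \rho\pr{s; L_0} \le \la^{-1/2} s^{p/2\pi}.$$

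Next I would transfer these to $L$ via the monotone change of variable $\ell\pr{z} = \exp\pr{-2\pi\Ga\pr{z}}$, under which $Q_s^L = \set{z : \Ga\pr{z} \ge -\frac{1}{2\pi}\log s}$, and likewise for $L_0$. Let $C_0 = C_0\pr{\la, m}$ be the constant from Lemma \ref{FSClose}, so $\abs{\Ga - \Ga_0} \le C_0\de$ on $B_m$. If $z \in Q_s^L$ then $\Ga_0\pr{z} \ge \Ga\pr{z} - C_0\de \ge -\frac{1}{2\pi}\log\pr{s\exp\pr{2\pi C_0\de}}$, so $z \in Q_{s\exp\pr{2\pi C_0\de}}^{L_0}$; running the inequality the other way gives $Q_{s\exp\pr{-2\pi C_0\de}}^{L_0} \su Q_s^L$. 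Thus
$$Q_{s\exp\pr{-2\pi C_0\de}}^{L_0} \su Q_s^L \su Q_{s\exp\pr{2\pi C_0\de}}^{L_0}$$
(for the values of $s$ for which the quasi-balls involved lie in $B_m$, which is all that is needed later, where $s$ is near $1$ and $m$ is large). Combining with the previous display,
$$\rho\pr{s; L} \le \rho\pr{s\exp\pr{2\pi C_0\de}; L_0} \le \la^{-1/2} s^{p/2\pi}\exp\pr{C_0 p\de},$$
and similarly $\si\pr{s; L} \ge \la^{1/2} s^{p/2\pi}\exp\pr{-C_0 p\de}$; absorbing $C_0 p$ into a single constant $C = C\pr{\la, m}$ gives the claim.

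The only non-routine input here is Lemma \ref{FSClose}, which is already established; granting it, the argument is just the elementary geometry of an ellipse together with the observation that $\ell \mapsto -\frac{1}{2\pi}\log\ell$ converts the additive $L^\iny$-closeness of $\Ga$ and $\Ga_0$ into a two-sided multiplicative pinch of the quasi-balls. The only place to be mildly careful is ensuring that the points at which one invokes Lemma \ref{FSClose} lie in $B_m$; this holds automatically for the parameter regime used in the sequel.
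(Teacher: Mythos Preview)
Your proposal is correct and follows essentially the same approach as the paper: compute the quasi-balls of the constant-coefficient operator $L_0$ explicitly as ellipses with semi-axes $\sqrt{d_i}\,s^{p/2\pi}$, then use Lemma~\ref{FSClose} to compare $\Ga$ and $\Ga_0$ and thereby relate the quasi-balls of $L$ and $L_0$. The paper carries this out by fixing extremal points $z_1 \in Z_s^L$ with $\abs{z_1} = \rho\pr{s;L}$ (and similarly for $\si$) and tracking which $Z_t^{L_0}$ they lie on, whereas you phrase the same comparison as a two-sided set inclusion $Q_{s\exp\pr{-2\pi C_0\de}}^{L_0} \su Q_s^L \su Q_{s\exp\pr{2\pi C_0\de}}^{L_0}$; the content is identical.
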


\begin{proof}
By definition, there exists $z_1, z_2 \in Z_s^{L}$ such that $\rho\pr{s; L}= \abs{z_1}$ and $\si\pr{s; L} = \abs{z_2}$.
Define $t_1, t_2 \in \R$ so that $z_1 \in Z_{t_1}^{L_0}$ and $z_2 \in Z_{t_2}^{L_0}$.
It follows from Definition \ref{quasiDef} and the definition of $\Ga_0$ given in \eqref{Ga0Def} that  for $i = 1, 2$, $\abs{z_i} \in \brac{d_2^{1/2} t_i^{p/2\pi}, d_1^{1/2} t_i^{p/2\pi}}$, where we have assumed that $d_1 \ge d_2$.
This observation also shows that $\rho\pr{s; L_0} = d_1^{1/2} s^{p/2\pi}$ and $\si\pr{s; L_0} = d_2^{1/2}s^{p/2\pi}$. 
As $z_1 \in Z_s^{L} \cap Z_{t_1}^{L_0}$, then Definition \ref{quasiDef} and Lemma \ref{FSClose} imply that 
\begin{align*}
\abs{\log \pr{ \frac {t_1} s }}
&= 2 \pi\abs{-\frac 1 {2\pi} \log s + \frac 1 {2\pi} \log t_1} 
= 2 \pi \abs{\Ga\pr{z_1} - \Ga_0\pr{z_1}} 
\le 2 \pi C \de.
\end{align*}
Similarly, since $z_2 \in Z_s^{L} \cap Z_{t_2}^{L_0}$, then $\abs{\log\pr{\frac s {t_2}}} \le 2 \pi C \de$.
Combining these observations shows that
\begin{align*}
\log \pr{\frac{\rho\pr{s; L}}{\rho\pr{s; L_0}}}
&\le \log \pr{\frac{ \abs{z_1}}{ t_1^{p/2\pi}}} + \frac p {2\pi}\abs{\log \pr{\frac{t_1} s}} - \log \pr{\frac{\rho\pr{s; L_0}}{s^{p/2\pi} }} 
\le C p \de
\end{align*}
and
\begin{align*}
\log \pr{\frac{\si\pr{s; L_0}}{\si\pr{s; L}}}
&\le \log \pr{ \frac{\si\pr{s; L_0}}{s^{p/2\pi}}} + \frac p {2\pi}\abs{\log \pr{\frac{s}{t_2}}} - \log \pr{\frac{\abs{z_2}}{t_2^{p/2\pi}}}
\le C p \de.
\end{align*}
To reach the conclusion, we combine these inequalities with the fact that $\la \le d_2 \le d_1 \le \la^{-1}$.
\end{proof}

\section{The Beltrami operators}
\label{Beltrami}

The aim of this section is to prove a Hadamard three-quasi-circle theorem and provide a similarity principle for solutions to Beltrami equations.
While much of this section is drawn from the work that was previously done in \cite{DKW17} and \cite{DW17}, we work from a simpler set of assumptions and therefore present modified versions of our previous results.
For all of the proofs of more general versions of these statements, we refer the reader to \cite{DKW17} and \cite{DW17}.

We assume throughout this section that $A$ is symmetric with determinant equal to $1$.
That is, $a_{12} = a_{21}$ and $\det A = 1$.
Associated to such an elliptic operator of the form $L = - \di A \gr$, we introduce a Beltrami operator that allows us to the reduce the second-order equation to a first-order system.
Define 
\begin{align}
D &=  \bar\del + \eta\pr{z} \del ,
\label{DDef} 
\end{align}
where $\bar \del = \tfrac{1}{2} \pr{\del_x + i \del_y}$, $\del = \tfrac{1}{2} \pr{\del_x - i \del_y}$, and
\begin{align*}
& \eta\pr{z} = \frac{a_{11} - a_{22} }{\det\pr{A + I}}  + i \frac{2 a_{12}}{\det\pr{A+I}}. 
\end{align*}
Since $\disp \abs{\eta\pr{z}}^2 = \frac{\tr A\pr{z} - 2}{\tr A\pr{z} + 2}$, then $\disp \abs{\eta\pr{z}} \le \sqrt{\frac{1-\la}{1 + \la}} < 1$.

Now we recall a pair of technical lemmas from \cite{DKW17} and refer the reader to that paper for their proofs.

\begin{lem}[Lemma 4.4 in \cite{DKW17}]
\label{decompLem}
Assume that $A$ is symmetric with determinant equal to $1$.
Then
$$\di A \gr = \pr{D + \widetilde W} \widetilde D,$$
where
\begin{align*}
\widetilde D &= \brac{1 + a_{11} - i a_{12}} \del_x + \brac{a_{12} - i\pr{1+a_{22}}} \del_y 
= \det\pr{A+I} \overline D  \\
\widetilde W 
&= \frac{\pr{\al \del_x a_{11} - \be \del_x a_{12} + \ga \del_y a_{11} + \de \del_y a_{12}} + i \pr{\ga \del_x a_{11} + \de \del_x a_{12} - \al \del_y a_{11} + \be \del_y a_{12}}}{ a_{11} \det\pr{A+I}^2} 
\end{align*}
\begin{align*}
&\al = a_{11} + a_{22} + 2 a_{11}a_{22} \qquad
\be = 2 a_{12}\pr{1 + a_{11}} \\
&\ga = a_{12}\pr{a_{22} - a_{11}} \qquad\qquad
\de = \pr{1 + a_{11}}^2 - a_{12}^2
\end{align*}
and $D$ is given by \eqref{DDef}.
\end{lem}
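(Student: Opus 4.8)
The plan is to treat the claimed factorization as an equality of second-order differential operators and to verify it by matching the coefficients of $\del_{xx}$, $\del_{xy}$, $\del_{yy}$, $\del_x$, $\del_y$ when both sides act on an arbitrary $C^2$ function. Introduce the shorthand $P = 1 + a_{11} - i a_{12}$ and $Q = a_{12} - i\pr{1 + a_{22}}$, so that $\widetilde D = P \del_x + Q \del_y$; since $a_{12} = a_{21}$ and $\det A = 1$ force $\det\pr{A + I} = \tr A + 2$, a one-line computation with $\del = \tfrac12\pr{\del_x - i\del_y}$, $\bar\del = \tfrac12\pr{\del_x + i\del_y}$ confirms that $P \del_x + Q \del_y = \det\pr{A+I}\,\ol D$. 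The overall strategy is then: (i) record the algebraic identities forced by $\det A = 1$; (ii) observe that the principal part of $\pr{D + \widetilde W}\widetilde D$ comes out correct automatically; (iii) solve for $\widetilde W$ from the remaining first-order terms.

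First I would establish the identities $1 + \eta = \frac{2\ol P}{\tr A + 2}$ and $1 - \eta = \frac{-2i\ol Q}{\tr A + 2}$, which after substituting the Wirtinger derivatives put $D$ into the real form
\begin{equation*}
D = \frac{1}{\tr A + 2}\pr{\ol P\, \del_x + \ol Q\, \del_y}.
\end{equation*}
Next, expanding $\abs{P}^2$, $\abs{Q}^2$, and $\ol P Q + \ol Q P$ and replacing $a_{12}^2$ by $a_{11}a_{22} - 1$ (which is where $\det A = 1$ enters) yields
\begin{equation*}
\abs{P}^2 = a_{11}\pr{\tr A + 2}, \qquad \abs{Q}^2 = a_{22}\pr{\tr A + 2}, \qquad \ol P Q + \ol Q P = 2 a_{12}\pr{\tr A + 2}.
\end{equation*}

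I would then compute $D\widetilde D = \frac{1}{\tr A + 2}\pr{\ol P\del_x + \ol Q\del_y}\pr{P\del_x + Q\del_y}$ and read off its second-order part: the coefficients of $\del_{xx}$, $\del_{yy}$, $\del_{xy}$ are $\abs{P}^2/\pr{\tr A + 2} = a_{11}$, $\abs{Q}^2/\pr{\tr A + 2} = a_{22}$, and $\pr{\ol P Q + \ol Q P}/\pr{\tr A + 2} = 2a_{12}$, respectively, which is precisely the principal part of $\di A \gr$. Since $\widetilde W\widetilde D$ contributes only first-order terms, the principal parts of the two sides already agree, and it remains to pick $\widetilde W$ so that the first-order terms match, i.e. so that
\begin{align*}
\widetilde W P &= \pr{\del_x a_{11} + \del_y a_{12}} - \frac{\ol P\, \del_x P + \ol Q\, \del_y P}{\tr A + 2}, \\
\widetilde W Q &= \pr{\del_x a_{12} + \del_y a_{22}} - \frac{\ol P\, \del_x Q + \ol Q\, \del_y Q}{\tr A + 2}.
\end{align*}
The first equation determines $\widetilde W$ uniquely, since $\mathrm{Re}\,P = 1 + a_{11} \ge 1 + \la > 0$ by ellipticity; substituting $\del_x P = \del_x a_{11} - i\del_x a_{12}$, etc., collecting real and imaginary parts, and simplifying with the identities above should reproduce the stated expression with the coefficients $\al,\be,\ga,\de$.

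The step I expect to be the real obstacle is the consistency check: one must verify that the $\widetilde W$ forced by the first equation also satisfies the second. This is where $\det A = 1$ is used a second time — differentiating $a_{11}a_{22} - a_{12}^2 = 1$ gives $a_{22}\del_x a_{11} + a_{11}\del_x a_{22} = 2 a_{12}\del_x a_{12}$ and the analogous $\del_y$ relation, and feeding these (after clearing denominators by $\det\pr{A+I}^2$) into $Q$ times the first equation minus $P$ times the second should make the whole expression collapse to zero. Apart from this cancellation the remaining work is pure bookkeeping, so the proof comes down to two genuinely nontrivial points: recognizing the clean real form of $D$ together with the quadratic identities for $\abs{P}^2$, $\abs{Q}^2$, $\ol P Q + \ol Q P$, and carrying through this derivative-constraint cancellation to arrive at the displayed formula for $\widetilde W$.
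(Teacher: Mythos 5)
The paper does not prove this lemma itself --- it is recalled verbatim from \cite{DKW17} with the proof deferred there --- so there is no in-paper argument to compare against, but your approach (direct coefficient matching after recasting $D$ in the real form $D = \frac{1}{\tr A + 2}\pr{\ol P\, \del_x + \ol Q\, \del_y}$ and then isolating $\widetilde W$ from the $\del_x$-coefficient equation) is correct and is surely the same direct verification that \cite{DKW17} carries out. Spot-checking your key identities $\abs{P}^2 = a_{11}\pr{\tr A + 2}$, $\abs{Q}^2 = a_{22}\pr{\tr A + 2}$, $\ol P Q + \ol Q P = 2a_{12}\pr{\tr A + 2}$, and the equation $\widetilde W P = \pr{\del_x a_{11} + \del_y a_{12}} - \frac{\ol P\, \del_x P + \ol Q\, \del_y P}{\tr A + 2}$ confirms that rationalizing by $\ol P$ and substituting $a_{12}^2 = a_{11}a_{22} - 1$ does reproduce precisely the displayed coefficients $\al, \be, \ga, \de$ in both the real and imaginary parts of the numerator, so the plan is sound; the only work left unexecuted is the routine bookkeeping, including the consistency of the $\del_y$-coefficient equation, which, as you correctly anticipate, hinges on the differentiated determinant constraints $a_{22}\del_x a_{11} + a_{11}\del_x a_{22} = 2a_{12}\del_x a_{12}$ and its $\del_y$ analogue.
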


\begin{lem}[c.f. Lemma 7.1 in \cite{DKW17}]
\label{UpsLab}
For $\widetilde D$ as defined in the previous lemma, there exists $\Upsilon$ so that 
\begin{equation}
W \cdot \gr v =\Upsilon \widetilde D v.
\label{UpsEqn}
\end{equation}
Moreover, if $A$ satisfies \eqref{ellip} and \eqref{Abd}, $\norm{\gr a_{ij}}_{L^\iny\pr{B_{m}}} \le K$, and $\norm{W}_{L^\iny\pr{B_{m}}} \le K$, then there is a constant $C = C\pr{\la}$ so that
\begin{equation}
\norm{\Upsilon}_{L^\iny\pr{B_{m}}} \le C K.
\label{UpsBd}
\end{equation}
\end{lem}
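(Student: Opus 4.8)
The plan is to construct $\Upsilon$ by the obvious pointwise formula and then extract \eqref{UpsBd} from a non-degeneracy property of $\widetilde D$. Throughout, recall that we apply this lemma to a real-valued $v$ (in the application $v = u/\phi$), and that in this section $A$ is symmetric with $\det A = 1$. First I would rewrite both quantities in complex form. Using the explicit expression for $\widetilde D$ from Lemma~\ref{decompLem}, the identity $\del v = \overline{\bar\del v}$ valid for real $v$, and the notation $\zeta := \bar\del v$ and $\mathbf w := W_1 + i W_2$, a short computation gives
\[
\widetilde D v = \det\pr{A+I}\,\bigl(\bar\zeta + \bar\eta\,\zeta\bigr), \qquad W\cdot\gr v = \mathbf w\,\bar\zeta + \bar{\mathbf w}\,\zeta ,
\]
so that $\abs{\zeta} = \tfrac12 \abs{\gr v}$ and $\abs{\mathbf w} = \abs{W}$.

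The crucial observation is a non-degeneracy bound for $\widetilde D$. Since $\abs{\eta(z)} \le \sqrt{(1-\la)/(1+\la)} < 1$ and $\det\pr{A(z)+I} \ge (1+\la)^2$, the reverse triangle inequality yields
\[
\abs{\widetilde D v(z)} \ \ge\ \det\pr{A(z)+I}\bigl(1 - \abs{\eta(z)}\bigr)\abs{\zeta(z)} \ \ge\ c(\la)\,\abs{\gr v(z)}
\]
for some $c(\la) > 0$; in particular $\widetilde D v$ and $\gr v$ vanish on the same set. I would then define $\Upsilon := (W\cdot\gr v)/(\widetilde D v)$ at the points where $\widetilde D v \ne 0$, and $\Upsilon := 0$ elsewhere. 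This $\Upsilon$ is measurable, and \eqref{UpsEqn} holds at every point of $B_m$: it is the definition where $\widetilde D v \ne 0$, while on the exceptional set both sides vanish because $\gr v = 0$ there. Finally, \eqref{UpsBd} is immediate from the displayed estimate: where $\widetilde D v \ne 0$,
\[
\abs{\Upsilon(z)} \ \le\ \frac{\abs{W(z)}\,\abs{\gr v(z)}}{\abs{\widetilde D v(z)}} \ \le\ \frac{\abs{W(z)}}{c(\la)} \ \le\ \frac{K}{c(\la)},
\]
and $\Upsilon = 0$ otherwise, so $\norm{\Upsilon}_{L^\iny\pr{B_m}} \le C(\la)\,K$.

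This argument is essentially routine, and I do not expect a genuine obstacle; the two points worth flagging are the following. The piecewise construction of $\Upsilon$ really uses that $v$ is real-valued --- that is what forces $\widetilde D v$ and $\gr v$ to vanish together, so that the definition produces a true identity rather than just an inequality; for complex $v$ the operator $\widetilde D$ can annihilate functions with nonvanishing gradient. And the non-degeneracy of $\widetilde D$ rests on nothing more than the strict ellipticity bound $\abs{\eta} < 1$ already recorded after \eqref{DDef}, so, in contrast to the bound on $\widetilde W$ in Lemma~\ref{decompLem}, the hypothesis $\norm{\gr a_{ij}}_{L^\iny\pr{B_m}} \le K$ is not needed to control $\Upsilon$.
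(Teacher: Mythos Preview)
Your proposal is correct. The paper does not prove this lemma here but refers to \cite{DKW17}; your construction --- set $\Upsilon = (W\cdot\gr v)/(\widetilde D v)$ pointwise and bound it via the ellipticity estimate $\abs{\widetilde D v} \ge c(\la)\abs{\gr v}$ for real $v$ --- is the natural argument and is essentially what appears there. Your observation that the hypothesis $\norm{\gr a_{ij}}_{L^\iny(B_m)}\le K$ plays no role in bounding $\Upsilon$ is also correct.
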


The following is a simplified version of the Hadamard three-quasi-circle theorem.
The general version of this result appears in \cite[Theorem~4.5]{DKW17}, and was reproved in \cite[Corollary~2]{DW17}.
We refer the reader to these papers for the proof of the following result and other related ideas.
The quasi-balls in the following theorem are those related to the operator $L = - \di\pr{A \gr}$.

\begin{prop}[Hadamard three-quasi-circle theorem]
\label{3circle}
Let $f$ satisfy $Df=0$  in $Q_{s_0}$. 
Then for $0<s_1<s_2<s_3 < s_0$
\[
\norm{f}_{L^\iny\pr{Q_{s_2}}} \le \pr{\norm{f}_{L^\iny\pr{Q_{s_1}}}}^\te \pr{\norm{f}_{L^\iny\pr{Q_{s_3}}}}^{1 - \te},
\]
where
\[
\te=\frac{\log(s_3/s_2)}{\log(s_3/s_1)}.
\]
\end{prop}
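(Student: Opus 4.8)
The plan is to carry out a variable-coefficient version of the textbook proof of Hadamard's three-circle theorem, with the harmonic comparison function $a\log\abs{z}+b$ replaced by $a\Ga + b$, where $\Ga$ is the fundamental solution of $L = -\di\pr{A\gr}$. By construction (Definition \ref{quasiDef}) the level sets of $\Ga$ are exactly the quasi-circles $Z_s$, with $\Ga \equiv -\tfrac{1}{2\pi}\log s$ on $Z_s$, so $a\Ga + b$ is the right analogue of $a\log\abs{z}+b$. The single nonclassical ingredient is the fact that $\log\abs{f}$ is an $L$-subsolution whenever $Df = 0$; I address this at the end.

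Granting that fact, the argument proceeds as in the classical case. Work on the closed annular region $R := \ol{Q_{s_3}} \setminus Q_{s_1}$, whose interior avoids the pole of $\Ga$, so that $a\Ga + b$ is $L$-harmonic on a neighborhood of $R$ for every $a, b \in \R$. Because $\log\abs{f}$ is an $L$-subsolution on $Q_{s_0}$, the maximum principle yields $\norm{f}_{L^\iny\pr{Q_{s_i}}} = \max_{Z_{s_i}}\abs{f}$ for $i \in \set{1,2,3}$; choose $a, b \in \R$ so that $-\tfrac{a}{2\pi}\log s_i + b = \log\norm{f}_{L^\iny\pr{Q_{s_i}}}$ for $i = 1, 3$. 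Then $\log\abs{f} - \pr{a\Ga + b}$ is an $L$-subsolution on $R$ which is $\le 0$ on $\del R = Z_{s_1} \cup Z_{s_3}$, hence $\le 0$ throughout $R$. Evaluating on $Z_{s_2}$, where $\Ga \equiv -\tfrac{1}{2\pi}\log s_2$,
\[
\log\norm{f}_{L^\iny\pr{Q_{s_2}}} = \max_{Z_{s_2}}\log\abs{f} \le -\frac{a}{2\pi}\log s_2 + b.
\]
Since $-\tfrac{1}{2\pi}\log s_2 = \te\pr{-\tfrac{1}{2\pi}\log s_1} + \pr{1-\te}\pr{-\tfrac{1}{2\pi}\log s_3}$ with $\te = \frac{\log\pr{s_3/s_2}}{\log\pr{s_3/s_1}}$, the right-hand side equals $\te\log\norm{f}_{L^\iny\pr{Q_{s_1}}} + \pr{1-\te}\log\norm{f}_{L^\iny\pr{Q_{s_3}}}$, and exponentiating gives the asserted inequality.

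It remains to justify that $\log\abs{f}$ is an $L$-subsolution when $Df = 0$. Since $D = \bar\del + \eta\del$ with $\norm{\eta}_{L^\iny} \le \sqrt{\pr{1-\la}/\pr{1+\la}} < 1$, the equation $Df = 0$ is a homogeneous Beltrami equation whose solutions are pseudo-analytic; by the Stoilow factorization, $f = F\circ\chi$ for some holomorphic $F$ and some quasiconformal homeomorphism $\chi$ whose complex dilatation is the coefficient attached to $L$ — that is, $-\eta$, read off from $A$ through the identity $\di A\gr = \pr{D + \widetilde W}\widetilde D$ of Lemma \ref{decompLem}. Then $\log\abs{f} = \pr{\log\abs{F}}\circ\chi$ is a subharmonic function composed with the $L$-adapted quasiconformal map, hence an $L$-subsolution — the same feature also makes the $Z_s$ the $\chi$-preimages of Euclidean circles, so that they are the natural substitute for circles. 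I expect the careful bookkeeping in this quasiconformal reduction, in particular the claim that a single map simultaneously straightens $D$ and the level sets of $\Ga$, rather than the comparison argument above, to be the main difficulty. It is carried out in full generality in \cite{DKW17} and \cite{DW17}, and the statement above is its specialization to symmetric $A$ with $\det A = 1$.
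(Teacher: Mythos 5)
Your proposal is correct, but takes a genuinely different route from the paper's reference proofs. The paper defers Proposition \ref{3circle} to \cite[Theorem~4.5]{DKW17} and \cite[Corollary~2]{DW17}, where the argument builds a quasiconformal homeomorphism $\chi$ that \emph{both} conjugates $D$ to $\bar\del$ (so $F := f \circ \chi^{-1}$ is holomorphic by Stoilow) \emph{and} carries each quasi-circle $Z_s$ onto the Euclidean circle of radius $s$, after which the classical Hadamard theorem applied to $F$ pulls back to give the statement for $f$. Your argument instead generalizes the subharmonic comparison proof of the classical three-circle theorem: once $\log\abs{f}$ is known to be an $L$-subsolution, the comparison with $a\Gamma+b$ on $\ol{Q_{s_3}}\setminus Q_{s_1}$, together with the fact that $\Gamma$ is constant on each $Z_s$ by Definition \ref{quasiDef}, yields the inequality directly. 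One remark worth making is that your route needs \emph{less} quasiconformal bookkeeping than you give yourself credit for: you flag ``the claim that a single map simultaneously straightens $D$ and the level sets of $\Gamma$'' as the main difficulty, but your comparison argument never uses that claim. You only need the subsolution property of $\log\abs{f}$, which follows from Stoilow together with the standard fact that composing a subharmonic function with a quasiconformal map of complex dilatation $-\eta$ produces a subsolution of $\di\pr{A\gr\cdot}$ for the $\det A = 1$ matrix encoded by $\eta$; no information about $\Gamma$ or $Z_s$ enters, because the Stoilow map for $Df=0$ automatically has dilatation $-\eta$ (one can check directly that the matrix recovered from $\mu=-\eta$ via the usual dilatation-to-coefficient formula is exactly $A$). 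So you have traded the two QC ingredients of \cite{DKW17} (Stoilow factorization \emph{and} $\chi\pr{Z_s}$ is a circle) for a single one (the subsolution pullback), re-deriving the logarithmic convexity of the maximum modulus by the maximum principle rather than importing it from the holomorphic case. The trade-off is sound and gives a valid alternative proof.
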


Now we present the simplified similarity principle.
More general versions of these results appear in Section 4.4 of \cite{DKW17} and Section 4.2 of \cite{DW17}, and we refer the reader to those papers for the proofs. 
The approach is based on the work of Bojarksi, as presented in \cite{Boj09}. 
Recall the operators
$$T{\om}\pr{z} = -\frac{1}{\pi} \int_\Om \frac{\om\pr{\zeta}}{\zeta - z} d\zeta$$
$$S \om \pr{z} = -\frac{1}{\pi} \int_\Om \frac{\om\pr{\zeta}}{\pr{\zeta - z}^2} d\zeta,$$
where if $g \in L^p$ for some $p \ge 2$, then $T g$ exists everywhere as an absolutely convergent integral and $S g$ exists almost everywhere as a Cauchy principal limit.

\begin{prop}[see Theorems 4.1, 4.3 \cite{Boj09}]
\label{simPrinc}
Let $w$ be a generalized solution (possibly admitting isolated singularities) to
\begin{equation*}
Dw := \bar \del w + \eta\pr{z} \del w = A\pr{z} w
\end{equation*}
in a bounded domain $\Omega \su \R^2$.
Assume that $\abs{\eta\pr{z}} \le  k < 1$ in $\Om$, and $A$ belongs to $L^t\pr{\Om}$ for some $t > 2$.
Then $w\pr{z}$ is given by
$$w\pr{z}= f\pr{z} g\pr{z},$$
where $D f = 0$, $g\pr{z} = e^{\vp\pr{z}}$ with $\vp\pr{z} = T{\om}\pr{z}$, and $\om \in L^t\pr{\Om}$ is a solution to $\om + \eta S \om = A$.
Furthermore, 
$$\exp\pr{-C \norm{A}_{L^t\pr{\Omega}}} \le \abs{g\pr{z}} \le \exp\pr{C \norm{A}_{L^t\pr{\Omega}}}.$$
\end{prop}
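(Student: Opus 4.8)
The plan is to construct the multiplier $\om$ first, then read off $g$ and $f$. Recall the two defining identities for the Cauchy transform on a bounded domain: $\bar\del\pr{T\om} = \om$ and $\del\pr{T\om} = S\om$, the latter as a Cauchy principal value, valid for $\om \in L^p\pr{\Om}$ with $p > 1$. If we posit $g = e^{\vp}$ with $\vp = T\om$, then since $D = \bar\del + \eta\del$ obeys the Leibniz rule, $Dg = g\, D\vp = g\pr{\bar\del\vp + \eta\del\vp} = g\pr{\om + \eta S\om}$. Thus $g$ solves $Dg = Ag$ precisely when $\om$ solves the singular integral equation
\begin{equation*}
\om + \eta S\om = A ,
\end{equation*}
so the first step is to solve this equation in an $L^{t}$-space.

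The second step is solvability. Here $S$ is the Beurling--Ahlfors transform: it is an isometry on $L^2\pr{\R^2}$ and is bounded on $L^p$ for every $1 < p < \iny$, with $\norm{S}_{L^p \to L^p} \to 1$ as $p \to 2$. Since $\abs{\eta\pr{z}} \le k < 1$ and $\Om$ is bounded --- so that $A \in L^t\pr{\Om} \su L^{t'}\pr{\Om}$ for every $t' \le t$ --- we may fix an exponent $t' \in \pb{2, t}$ close to $2$ for which $\norm{\eta S}_{L^{t'}\to L^{t'}} \le k\,\norm{S}_{L^{t'}\to L^{t'}} < 1$. Then $I + \eta S$ is invertible on $L^{t'}$ by a Neumann series, so there is a unique $\om = \pr{I + \eta S}^{-1} A \in L^{t'}\pr{\Om}$ with $\norm{\om}_{L^{t'}\pr{\Om}} \le C\,\norm{A}_{L^{t'}\pr{\Om}} \le C\pr{\Om, t}\norm{A}_{L^t\pr{\Om}}$. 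Allowing $t'$ to be slightly smaller than $t$ costs nothing below; pinning down the precise admissible range of exponents in terms of $k$ is the one genuinely delicate point, and is exactly the analysis carried out in \cite{Boj09}.

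The third step records the estimates on $g$ and the factorization. For $\om \in L^{t'}\pr{\Om}$ with $t' > 2$ and $\Om$ bounded, the Cauchy transform obeys $\vp = T\om \in C^{0,\,1 - 2/t'}\pr{\R^2}$ with $\norm{\vp}_{L^\iny} \le C\pr{\Om, t'}\norm{\om}_{L^{t'}\pr{\Om}}$; combining this with the bound on $\norm{\om}_{L^{t'}\pr{\Om}}$ gives $\norm{\vp}_{L^\iny\pr{\Om}} \le C\norm{A}_{L^t\pr{\Om}}$. Since $\abs{g\pr{z}} = e^{\operatorname{Re}\vp\pr{z}}$, the two-sided bound $\exp\pr{-C\norm{A}_{L^t\pr{\Om}}} \le \abs{g\pr{z}} \le \exp\pr{C\norm{A}_{L^t\pr{\Om}}}$ follows, and in particular $g$ is continuous and nowhere vanishing. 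Now set $f = w/g$; it is well defined and inherits whatever isolated singularities $w$ may have. Applying the Leibniz rule to $w = fg$ gives $Dw = g\,Df + f\,Dg = g\,Df + f A g$, whereas $Dw = Aw = A f g$ by hypothesis, so $g\,Df = 0$ and hence $Df = 0$ on $\Om$ (away from the singular set of $w$). This is precisely the asserted decomposition.

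The main obstacle is the invertibility of $I + \eta S$: it hinges on the sharp $L^p$-mapping properties of the Beurling transform near $p = 2$ --- that $\norm{S}_{L^p\to L^p}$ depends continuously on $p$ and equals $1$ at $p = 2$ --- which is what fixes the range of exponents on which a Neumann series converges. Everything else --- the Leibniz rule for $D$, the identities $\bar\del\,T = I$ and $\del\,T = S$, and the H\"older estimate for $T$ --- is standard Calder\'on--Zygmund and potential theory, and the remainder of the argument is bookkeeping.
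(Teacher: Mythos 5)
Your proof is correct and follows precisely the classical Bojarski--Vekua argument that the paper itself cites for this proposition; the paper supplies no proof of its own, instead deferring to \cite{Boj09}, \cite{DKW17}, and \cite{DW17}. The ingredients you use --- Neumann-series invertibility of $I+\eta S$ on $L^{t'}$ for $t'$ close to $2$ via $\norm{S}_{L^2\to L^2}=1$ and continuity of the norm in $p$, the Cauchy-transform identities $\bar\del T\om=\om$ and $\del T\om=S\om$, the Leibniz rule for $D$, and the H\"older/sup bound on $T\om$ over a bounded domain --- are exactly the right ones, and your explicit caveat that one may only obtain $\om\in L^{t'}$ for some $t'\in\pb{2,t}$ (rather than $\om\in L^t$ when $t$ lies outside the invertibility range) is the sole delicate point; it is harmless here since the two-sided bound on $g$ requires only $\om\in L^{t'}$ for some $t'>2$ with norm controlled, via H\"older on the bounded domain $\Om$, by $\norm{A}_{L^t\pr{\Om}}$.
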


\section{The proof of Theorem \ref{OofV}}
\label{ordVan}

To prove Theorem \ref{OofV}, we use the positive multiplier $\phi$ to reduce our PDE \eqref{ePDE} to a first-order Beltrami equation.
Then we apply the similarity principle and the Hadamard three-quasi-circle theorem to get a three-ball inequality for our solution function.

Let $u$ and $\phi > 0$ be as given, where $\phi$ has been normalized so that $\phi\pr{0} = 1$.
Define $\disp v = \frac u \phi$ and notice that since both functions are solutions to \eqref{ePDE} and there is no loss in assuming that $A = A^T$ (see Remark \ref{symm}), then
\begin{align*}
-\di\pr{A \gr v} 
&+ \pr{W - 2 A \gr {\log \phi}} \cdot \gr v = 0.
\end{align*}
Since we will rely upon the tools developed in Section \ref{Beltrami}, we need to reduce this equation to one where $A$ also has determinant equal to $1$.
A computation shows that
\begin{align}
\label{tePDE}
- \di\pr{\overline A \gr v}
+ \overline W \cdot \gr v
&= 0,
\end{align}
where
\begin{align*}
\overline{A} = \frac{ A}{\sqrt{\mbox{det} A}}, \qquad 
\overline W &= \frac{W - 2 A \gr \pr{\log \phi} }{\sqrt{\mbox{det} A}} + A \gr\pr{\frac{1}{\sqrt{\det  A}}}.
\end{align*}
The radii $d$, $b$, and $m$ are defined as in \eqref{dDef} -- \eqref{mDef} with respect to the operator $\overline L := - \di\pr{\overline A \gr}$.
Since $\overline A$ is symmetric with determinant equal to $1$, Lemmas \ref{decompLem} and \ref{UpsLab} imply that equation \eqref{tePDE} is equivalent to
\begin{align*}
-\pr{D + \widetilde W} \widetilde D v + \Upsilon \widetilde D v = 0,
\end{align*}
where $D$, $\widetilde D$ and $\widetilde W$ are now defined with respect to $\overline A$ and $\Upsilon$ depends on $\overline W$.
Upon setting $w = \widetilde D v$,
this equation reduces to
$$D w = \pr{\Upsilon - \widetilde W} w.$$
An application of the similarity principle described by Proposition \ref{simPrinc} shows that
$$w\pr{z} = f\pr{z} g\pr{z},$$
where $Df = 0$ in $B_b$.
Since $\norm{\widetilde W}_{L^\iny\pr{B_b}} \le CK$ and Lemma \ref{grphiLem} implies that $\norm{\overline W}_{L^\iny\pr{B_b}} \le CK$, then Lemma \ref{UpsLab} implies that $\norm{\Upsilon - \widetilde W}_{L^\iny\pr{B_b}} \le CK$ and it follows that for a.e. $z \in B_b$,
$$\exp\pr{-C K} \le \abs{g\pr{z}} \le \exp\pr{C K}.$$
We now apply Proposition \ref{3circle}, the three-quasi-circle theorem, to $f = g^{-1} w$ and use the bound on $g$ to get
\begin{align*}
\norm{w}_{L^\iny\pr{Q_{1}}}
&= \norm{g f}_{L^\iny\pr{Q_{1}}}
\le \exp\pr{CK} \norm{f}_{L^\iny\pr{Q_{1}}} 
\le \exp\pr{CK} \pr{\norm{f}_{L^\iny\pr{Q_{s/2}}}}^\te \pr{\norm{f}_{L^\iny\pr{Q_{s_2}}}}^{1 - \te},
\end{align*}
where $s << 1 < s_2 := 1 + F\pr{K}$ and $\te=\frac{\log(s_2)}{\log(2s_2/s)}$.
Let $r = 2\rho\pr{s/2}$ so that $Q_{s/2} \su B_{r/2}$.
It follows from \eqref{bDef} and the definition of $\rho$ that $Q_{s_2} \su B_{b}$.
Then we have
\begin{align*}
c \norm{\gr v}_{L^\iny\pr{Q_{1}}}
\le \norm{w}_{L^\iny\pr{Q_{1}}}
&\le \exp\pr{CK} \pr{\norm{w}_{L^\iny\pr{B_{r/2}}}}^\te \pr{\norm{w}_{L^\iny\pr{B_{b}}}}^{1 - \te} \\
&\le \exp\pr{CK} \pr{\norm{\gr v}_{L^\iny\pr{B_{r/2}}}}^\te \pr{\norm{\gr v}_{L^\iny\pr{B_{b}}}}^{1 - \te},
\end{align*}
where we have used \eqref{ellip} and \eqref{Abd} to conclude that $\abs{w} \sim \abs{\gr v}$.
The definition of $v$, Remark \ref{rem2}, Lemma \ref{grphiLem}, and Lemma \ref{phiBdLem} imply that
\begin{align*}
\norm{\gr v}_{L^\iny\pr{B_{r/2}}}
&\le \norm{\phi^{-1} \gr u}_{L^\iny\pr{B_{r/2}}} + \norm{\phi^{-1} u \gr \pr{\log\phi}}_{L^\iny\pr{B_{r/2}}} \\
&\le  \norm{\phi^{-1}}_{L^\iny\pr{B_{r/2}}}\brac{\frac{C\pr{\la, K}}{r} \norm{u}_{L^\iny\pr{B_{r}}}  + C K \norm{u}_{L^\iny\pr{B_{r/2}}} } 
\le \frac{\exp\pr{C K}}{r} \norm{u}_{L^\iny\pr{B_{r}}} .
\end{align*}
Similarly,
\begin{align*}
\norm{\gr v}_{L^\iny\pr{B_{b}}}
&\le \exp\pr{C K} \frac{C\pr{\la, K}}{m-b}  \norm{u}_{L^\iny\pr{B_{m}}}
\le \exp\brac{\pr{C+C_1}K} ,
\end{align*}
where we have applied \eqref{localBd} and that $\frac{1}{m - b} \lesssim K$.
Combining what we have so far shows that
\begin{align}
\norm{\gr v}_{L^\iny\pr{Q_{1}}}
&\le \exp\pr{CK} \pr{\frac {\norm{u}_{L^\iny\pr{B_{r}}}}{r}}^\te.
\label{3ball}
\end{align}

Towards completing the proof, we need to bound the lefthandside from below using the assumption from \eqref{localNorm} that $\norm{u}_{L^\iny\pr{B_d}} \ge \exp\pr{- c_1 K^p}$.
We repeat the argument from \cite{KSW15} here.
This assumption implies that there exists $z_0 \in B_d$ such that $\abs{u\pr{z_0}} \ge \exp\pr{- c_1 K^p}$. 
Without loss of generality, we'll assume that $u\pr{z_0} \ge \exp\pr{- c_1 K^p}$.
Since $u$ is real-valued, then for any $a > 0$, we have that either $u\pr{z} \ge a$ for all $z \in B_d$, or there exists $z_1 \in B_d$ such that $u\pr{z_1} < a$.
If the second case holds with $a = \frac{1}{2} \exp\pr{- 2 c K- c_1 K^p}$, then by \eqref{phiBounds} we see that
\begin{align*}
\frac{u\pr{z_1}}{\phi\pr{z_1}} 
\le \frac{a}{\phi\pr{z_1}} 
\le a \exp\pr{c K}
\le \frac{1}{2}\exp\pr{-c K- c_1 K^p},
\end{align*}
while
\begin{align*}
\frac{u\pr{z_0}}{\phi\pr{z_0}} 
\ge \exp\pr{- c_1 K^p - c K}.
\end{align*}
Since $B_d \su Q_{1 - F\pr{K}} \su Q_1$, it follows that
\begin{align*}
C \norm{\gr v}_{L^\iny\pr{Q_{1}}} 
\ge \abs{v\pr{z_0} - v\pr{z_1}} 
\ge \frac{u\pr{z_0}}{\phi\pr{z_0}} - \frac{u\pr{z_1}}{\phi\pr{z_1}}
\ge \frac{1}{2}\exp\pr{-c K- c_1 K^p}.
\end{align*}
Combining this bound with estimate in \eqref{3ball} shows that
\begin{align*}
\norm{u}_{L^\iny\pr{B_{r}}} 
&\ge r \exp\pr{-\frac{C K + c_1 K^p}\te}
\end{align*}
Recalling that $\te=\frac{\log(s_2)}{\log(2s_2/s)}$, we see that
\begin{align*}
- \frac 1 \te &= \frac{\log\pr{s/2} - \log s_2}{\log s_2}
= \frac{\log\pr{s/2}- \log \pr{1 + F\pr{K}}}{\log \pr{1 + F\pr{K}}} 
\ge \frac{C \log\pr{s/2}}{F\pr{K}}.
\end{align*}
An application of Lemma \ref{ZsBounds} shows that $c \log r \le \log \pr{r/2} \le c_2 \log\pr{s/2}$, so that
\begin{align*}
- \frac 1 \te &
\ge \frac{C \log\pr{r}}{F\pr{K}}
\end{align*}
and the conclusion of the Theorem \ref{OofV} follows.
On the other hand, if $u\pr{z} \ge a$ for all $z \in B_d$, then the conclusion is obviously satisfied.

\section{The proof of Theorem \ref{LandisThm0}}
\label{Landis0}

We begin with a proposition that serves as the main tool in the iteration scheme.

\begin{prop}
\label{itProp0}
Let the coefficient matrix $A$ be symmetric and satisfy \eqref{ellip}, \eqref{Abd}, and \eqref{Lips}.
Assume that $\norm{V}_{L^\iny\pr{\R^2}} \le \mu_1^2$, $\norm{W}_{L^\iny\pr{\R^2}} \le \mu_2$, and that $\mathcal{L}$ is non-negative in $\R^2 \setminus B_{S_0}$ for some $S_0 > 0$.
Let $u: \R^2 \to \R$ be a solution to \eqref{ePDE} for which \eqref{uBd} holds.
Suppose that for any $\ga \in\pr{0, \eps_0}$ and any $S \ge \tilde S\pr{\mu_0, \eps_0, C_0, c_0, S_0, \ga, \La}$, there exists an $\al \in \pb{1, 3}$ so that
\begin{align}
\inf_{\abs{z_0} = S}\norm{u}_{L^\iny\pr{B_\La\pr{z_0}}} \ge \exp\pr{- S^\al}.
\label{stepnEst}
\end{align}
Assume further that when we restrict to $z \in \R^2$ with $\abs{z} \in\brac{ \frac S 2,  4 S^{1 + \ga}}$, \eqref{ellip} and \eqref{Abd} hold with $\la \ge 1 - \frac {S^{-\ga}} {20}$.
Then for any $R \ge S + S^{1 + \ga} - \La$, it holds that
\begin{equation}
\inf_{\abs{z_1} = R} \norm{u}_{L^\iny\pr{B_\La\pr{z_1}}} \ge \exp\pr{- C_2 R^\be \log R},
\label{stepn1Est}
\end{equation}
where $C_2 = C_2\pr{\mu_1,\mu_2}$ and $\be = \max\set{\frac \al {1 + \ga}, 1} + \frac \ga {1 + \ga}$.
\end{prop}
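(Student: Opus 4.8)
The plan is to deduce \eqref{stepn1Est} from the local order of vanishing estimate (Theorem \ref{OofV}) via a rescaling argument in the spirit of \cite{BK05}, exactly as in the sketch for $-\LP + V$ given in Section \ref{S2}, but now keeping careful track of the variable-coefficient structure. First I would fix $\ga \in (0,\eps_0)$ and $S$ large, set $T = S^{1+\ga}$, pick a point $z_1$ with $\abs{z_1} = R := S + T - \La$, and let $z_0 = S\, z_1/\abs{z_1}$, so that $\abs{z_0} = S$ and $B_\La(z_0)$ lies ``between'' the origin and $z_1$ at scale $S$. Then I would introduce the rescaled solution $\tilde u(z) = u(z_1 + T z)$, together with $\tilde A(z) = A(z_1 + Tz)$, $\tilde W(z) = T\, W(z_1 + Tz)$, $\tilde V(z) = T^2 V(z_1 + Tz)$, and the rescaled operator $\tilde{\mathcal L}$; then $\tilde{\mathcal L}\tilde u = 0$ on a ball $B_m$ whose radius $m$ is chosen so that $B_m$, pushed back to the original variables, reaches from a neighborhood of $z_1$ back to (a neighborhood of) $z_0$ without entering $B_{S/2}$ — concretely $m \approx 1 + \tfrac{S}{2T} = 1 + \tfrac12 S^{-\ga/(1+\ga)}$, so the annular region $\abs{z} \in [S/2, 4S^{1+\ga}]$ from the hypothesis is exactly what controls the coefficients on $B_m$.

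The second step is to verify all the hypotheses of Theorem \ref{OofV} for $\tilde u$ with the choices $K = T$, $F(K) = \tfrac{S}{4T} = \tfrac14 T^{-\ga/(1+\ga)}$, and $p = \al/(1+\ga)$. The norm bounds $\norm{\gr \tilde a_{ij}}_{L^\iny(B_m)} \le K$, $\norm{\tilde V}_{L^\iny(B_m)} \le K^2$, $\norm{\tilde W}_{L^\iny(B_m)} \le K$ follow from \eqref{Lips}, $\norm{V}_\iny \le \mu_1^2$, $\norm{W}_\iny \le \mu_2$ after rescaling (since $T \gg \mu_0,\mu_1,\mu_2$ for $S$ large), and one checks $K^{-1} \lesssim F(K) < 1$. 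The upper bound \eqref{localBd}, $\norm{\tilde u}_{L^\iny(B_m)} \le \exp(C_1 K)$, comes from the polynomial bound \eqref{uBd}: on $B_m$ the original variable satisfies $\abs{z} \lesssim S^{1+\ga} = T$, so $\abs{u} \lesssim C_0 T^{c_0}$, which is certainly $\le \exp(C_1 T)$. The lower bound \eqref{localNorm}, $\norm{\tilde u}_{L^\iny(B_d)} \ge \exp(-c_1 K^p)$, is where the hypothesis \eqref{stepnEst} enters: since $d = \si(1 - F(K))$ and the inner-radius bound from Lemma \ref{rsClose} (using the ellipticity lower bound $\la \ge 1 - S^{-\ga}/20$ on the relevant annulus, which forces $\overline A$ close to the identity and hence the quasi-balls close to Euclidean balls) shows $B_d$ contains a Euclidean ball around $0$ of radius $\gtrsim 1$, whose pushforward contains $B_\La(z_0)$; then \eqref{stepnEst} gives $\norm{\tilde u}_{L^\iny(B_d)} \ge \exp(-S^\al) = \exp(-(T^{1/(1+\ga)})^\al) = \exp(-K^p)$. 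Finally I would also need a positive multiplier $\phi$ on $B_m$: this is supplied by Lemma \ref{posMul0}, since $\mathcal L$ is non-negative on $\R^2 \setminus B_{S_0}$ and $B_m$ pushed back lies in that exterior region for $S$ large, so a positive supersolution exists there and Lemma \ref{posMul0} upgrades it to a positive solution.

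The third step is to apply Theorem \ref{OofV} with $r$ chosen so that the pullback of $B_r$ contains $B_\La(z_1)$ — i.e. $r \approx \La/T$, which is indeed ``sufficiently small'' for $S$ large — to conclude $\norm{\tilde u}_{L^\iny(B_r)} \ge r^{C K^q/F(K)}$ with $q = \max\{1,p\}$. Unwinding the rescaling, $\norm{u}_{L^\iny(B_\La(z_1))} = \norm{\tilde u}_{L^\iny(B_{\La/T}(0))} \ge (\La/T)^{C T^q/F(K)}$. Taking logs: $\log\norm{u}_{L^\iny(B_\La(z_1))} \ge -C\, \frac{T^q}{F(K)} \log(T/\La) \gtrsim -C\, T^q\, T^{\ga/(1+\ga)} \log T$. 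Since $q = \max\{1, \al/(1+\ga)\}$, we get $T^{q + \ga/(1+\ga)} = S^{(1+\ga)\max\{1,\al/(1+\ga)\} + \ga} = S^{(1+\ga)\be}$ where $\be = \max\{\al/(1+\ga), 1\} + \ga/(1+\ga)$, and since $R = S + T - \La \le 2S^{1+\ga}$ and $R \ge S^{1+\ga}$ (for $S$ large), this is $\lesssim R^\be \log R$ up to constants, giving \eqref{stepn1Est} with $C_2$ depending only on $\mu_1,\mu_2$ (all the $\la$-dependence in the constant $C$ from Theorem \ref{OofV} is harmless because $\la \to 1$).

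\textbf{Main obstacle.} The delicate point is the geometric bookkeeping between the three relevant scales and the quasi-balls: one must confirm that after rescaling by $T$ the ball $B_d$ (defined via the \emph{quasi}-radius $\si$ of the rescaled operator $\overline L$) genuinely contains a Euclidean ball around $z_0/T$-image large enough to house $B_\La(z_0)$, and that $B_m$ stays inside the exterior domain where $\mathcal L$ is non-negative and inside the annulus where the near-identity ellipticity $\la \ge 1 - S^{-\ga}/20$ holds. This is exactly what Lemma \ref{rsClose} is built to handle — the near-identity bound makes $p/2\pi \to 1$ and $\exp(\pm C\de) \to 1$, so the quasi-balls are squeezed between Euclidean balls of almost-equal radii — but tracking that all the ``$\le K$'' and ``$\ge \exp(-\cdots)$'' hypotheses survive with the precise exponents, and that the error terms $F(K), \mu, \de$ are all simultaneously small, is the technical heart of the argument. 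The rest is the same scaling computation as in Section \ref{S2}.
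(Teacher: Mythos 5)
Your proposal follows essentially the same route as the paper's proof: rescale around $z_1$ by a factor $\approx T = S^{1+\gamma}$, verify the hypotheses of Theorem \ref{OofV} with $F(K) \asymp S/T$, obtain the positive multiplier from Lemma \ref{posMul0} via the non-negativity of $\mathcal L$ outside $B_{S_0}$, control the quasi-ball radii with Lemma \ref{rsClose} and the near-identity ellipticity, and unwind the scaling to get the new exponent $\be$. The two small calibration points that your sketch leaves implicit and that the paper is forced to handle: (i) the rescaling factor must be $aT$ with $a = (1 - \tfrac{S}{5T})^{-1}$ rather than $T$, precisely so that $aTd \ge T$, which ensures the rescaled inner quasi-ball $B_d$ pushes forward to a set containing $B_\Lambda(z_0)$; and (ii) $K$ must be taken $\asymp \max\{\mu_1,\mu_2\}\, T$ rather than $K = T$, since $\|\tilde V\|_\infty \le (aT\mu_1)^2$ needs to be $\le K^2$ regardless of how large $T$ is — this is exactly the source of the $(\mu_1,\mu_2)$-dependence of $C_2$ that you correctly anticipated.
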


\begin{proof}
Fix $\ga \in \pr{0, \eps_0}$ and $S \ge \tilde S$.
Define $T = S^{1 + \ga}$.
Let $z_1 \in \R^2$ be such that $\abs{z_1} = S + T - \La= R$ and define $z_0 = S \frac{z_1}{\abs{z_1}}$.

With $a = \pr{1 - \frac{S}{5T}}^{-1}$, define $\tilde u\pr{z} = u\pr{z_1 + aT z}$, $\widetilde A\pr{z} = A\pr{z_1 + aT z}$, $\widetilde W\pr{z} = aT W\pr{z_1 + aT z}$, and $\widetilde V\pr{z} = \pr{aT}^2 V\pr{z_1 + aT z}$ so that
\begin{align*}
- \di \pr{\widetilde A\pr{z} \gr \tilde u\pr{z} }
&+ \widetilde W\pr{z} \cdot \gr \tilde u \pr{z} + \widetilde V\pr{z} \tilde u\pr{z} = 0 .
\end{align*}
Define $d = \si\pr{1 - \frac S {20 T}}$, $b = \rho\pr{1 + \frac S {20 T}}$, and $m  = b + \frac {S}{20 T}$. 
The conditions on $A$, $V$, and $W$ imply that $\norm{\gr \tilde a_{ij}}_{L^\iny\pr{B_m}} \le \mu_0 a T$, $\norm{\widetilde V}_{L^\iny\pr{B_m}} \le \pr{\mu_1 a T}^2$, and $\norm{\widetilde W}_{L^\iny\pr{B_m}} \le \mu_2 aT$.
For Theorem \ref{OofV} to be applicable, we require a positive multiplier, $\phi$, defined in $B_m$.
Since $\mathcal{L}$ is assumed to be non-negative in $\R^2 \setminus B_{S_0}$, then there exists a positive supersolution in $\R^2 \setminus B_{S_0}$, and as long as $B_{aTm}\pr{z_1} \su \R^2 \setminus B_{S_0}$, an application of Lemma \ref{posMul0} implies that such a function $\phi$ exists in $B_m$.
For this set containment to hold, the following paragraph shows that it suffices to take $S \ge 2 S_0$.

Assuming that $a T m \le T + \frac {S}{2}- \La$, condition \eqref{Lips} implies that 
\begin{equation}
\label{grtA}
\abs{\gr \tilde a_{ij}\pr{z}} \le aT \mu_0 \pr{S/2}^{-\pr{1 + \eps_0}} \le \frac 5 4 \mu_0 2^{-\pr{1 + \eps_0}} S^{\ga-\eps_0} \quad \text{ in } B_m.
\end{equation}
An application of Lemma \ref{rsClose} then shows that
\begin{align*}
b \le \pr{1 - \frac {S} {20T}}^{- \frac 1 2}\pr{1 + \frac {S} {20 T}}^{\frac p {2\pi}} \exp\pr{5 C\mu_0 2^{-\pr{3 + \eps_0}} S^{\ga-\eps_0} } \\
d \ge  \pr{1 - \frac {S} {20T}}^{\frac 1 2}\pr{1 - \frac {S} {20 T}}^{\frac p {2\pi}} \exp\pr{-5 C \mu_0 2^{-\pr{3 + \eps_0}} S^{\ga-\eps_0} },
\end{align*}
where $C$ is a universal constant.
We choose $S$ sufficiently large with respect to $\mu_0$ and $\ga - \eps_0$ so that $b \le 1 + \frac{S}{5T}$ and $d \ge 1 - \frac S{5T}$.
Then $aT b \le T + \frac{9S}{20} -\La$ and, consequently, $a T m \le T + \frac{S}{2}- \La$, as required.
Condition \eqref{uBd} in combination with the upper bound on $aT m$ implies that 
\begin{align}
\norm{\tilde u}_{L^\iny\pr{B_m}} 
&= \norm{u}_{L^\iny\pr{B_{aT m}\pr{z_1}}} 
\le \norm{u}_{L^\iny\pr{B_{2T + \frac {3S}2 }\pr{0}}}
\le C_0 \pr{4T}^{c_0}
\le \exp\pr{T},
\label{tuUpper}
\end{align}
if $S$ is sufficiently large with respect to $C_0$ and $c_0$, while \eqref{stepnEst} with $aTd \ge T$ shows that
\begin{align}
\norm{\tilde u}_{L^\iny\pr{B_d}} 
&=\norm{u}_{L^\iny\pr{B_{aTd}\pr{z_1}}} 
\ge \norm{u}_{L^\iny\pr{B_{\La}\pr{z_0}}} 
\ge \exp\pr{- S^\al}.
\label{tuLower}
\end{align}
Set $a_0 = \frac 5 4 \max\set{\mu_1, \mu_2}$.
Then Theorem \ref{OofV} is applicable with $K = a_0 T$, $F\pr{K} = \frac S {20 T} = \frac{1}{20 } \pr{\frac {K}{a_0}}^{-\frac{\ga}{1 + \ga}}$, $C_1 = \frac 1 {a_0}$, $c_1 = \pr{\frac 1 {a_0}}^3$, and $p = \frac \al {1 + \ga} \le 3$.
That is, with $q = \max\set{1, p}$ and a constant $C$ depending on $a_0$, we have
\begin{align*}
\norm{u}_{B_{a T r}\pr{z_1}} 
\ge \norm{\tilde u}_{B_r\pr{z_1}} 
\ge r^{C K^q/ F\pr{K}}
\ge r^{20 \tilde C T^{q + \frac \ga {1 + \ga}}},
\end{align*}
where $\tilde C = C a_0^3$.
Setting $r = \frac {4\La}{5 T}$ shows that
\begin{align*}
\norm{u}_{B_{\La}\pr{z_1}} 
&\ge \exp\brac{-21 \tilde C T^{\be} \log T },
\end{align*}
where we have assumed that $\log T \ge 20 \log\pr{5/4\La}$.
Since $z_1 \in \R^2$ with $\abs{z_1} = R$ was arbitrary, the conclusion of the proposition follows.
\end{proof}

We now have everything required to prove Theorem \ref{LandisThm0}.

\begin{proof}[The proof of Theorem \ref{LandisThm0}]
We first consider the case where $\Om \ne \R^2$.
Let $\eps > 0$ be given. 
Fix $R \ge R_0$, where $R_0$ will be specified below.

As pointed out in Remark \ref{symm}, there is no loss in assuming that $A$ is symmetric.
We apply a change of variables so that for some $\hat z$ sufficiently far from the origin, the coefficient matrix is equal to the identity.
Since $A$ is real, symmetric and elliptic, then there exists a constant symmetric matrix $Q$ for which $Q^2 = A\pr{\la^{-1/2} R \vec e_1}$.
Define $\hat z = \la^{-1/2} R Q^{-1} \vec e_1$, and note that since $\norm {Q^{-1}} \ge \la^{1/2}$, then $\abs{\hat z} \ge R$.
Let $\tilde u\pr{z} = u\pr{Q z}$, $\widetilde A\pr{z} = Q^{-1}A\pr{Q z} Q^{-1}$, $\widetilde W\pr{z} = W\pr{Q z}Q^{-1}$, and $\widetilde V\pr{z} = V\pr{Q z}$ so that $\widetilde A\pr{\hat z} = I$ and
$$-\di \pr{\widetilde A \gr \tilde u}  + \widetilde W \cdot \gr \tilde u + \widetilde V \tilde u = 0.$$
Moreover, all of the hypotheses are satisfied where the new constants $\la$, $\mu_0$, $\mu_2$, and $C_0$ depend additionally on $\la$.
The condition that $\widetilde A\pr{\hat z} = I$, in combination with \eqref{Lips}, implies that if $\abs{z}$ is sufficiently large, then $\la$ is close to $1$.
We will make this statement rigorous below.

To start the iteration scheme, we apply Theorem 1.1 from \cite{LW14}, and therefore need to transform the elliptic equation \eqref{ePDE} into non-divergence form.
Notice that
\begin{align}
-\di \pr{\widetilde A \gr \tilde u} + \widetilde W \cdot \gr \tilde u + \widetilde V \tilde u
&=-  \sum_{i, j =1}^2 \tilde a_{ij}  \del_{ji} \tilde u + \pr{\widetilde W - W_n} \cdot \gr \tilde u + \widetilde V \tilde u,
\label{ndForm}
\end{align}
where $W_n := \pr{\del_x \tilde a_{11} + \del_y \tilde a_{21}, \del_x \tilde a_{12} + \del_y \tilde a_{22}}$ and condition \eqref{Lips} implies that $\widetilde W - W_n$ belongs to $L^\iny$.
As Theorem 1.1 from \cite{LW14} is applicable to the transformed equation, we conclude that there exists $\hat S_0 = \hat S_0\pr{C_0,\mu_0,\mu_1,\mu_2,\eps_0}$ so that whenever $S \ge \hat S_0$, it holds that
\begin{align*}
\inf_{\abs{z_0} = S}\norm{\tilde u}_{L^\iny\pr{B_{\sqrt \la}\pr{z_0}}} \ge \exp\pr{- C_3 S^{2} \pr{\log S}^{\eta \pr{S}}},
\end{align*}
where $\disp \eta\pr{S} = C' \pr{\log S} \pr{\log \log \log S} \pr{\log \log S}^{-2}$ and the constants $C_3$ and $C'$ both depend on $c_0$, $C_0$, $\mu_0$, $\mu_1$, $\mu_2$ and $\eps_0$.

Let $\ga := \min\set{\eps, \frac {\eps_0}{2}}$ and choose 
$$S_0 \ge \max\set{ \tilde S\pr{\mu_0, \eps_0, C_0, c_0, d\pr{\Om}, \ga, \sqrt \la}, \hat S_0, \pr{20 C_4}^{\frac 2{\eps_0}}},$$ 
where $\tilde S$ is as given in Proposition \ref{itProp0}, $d\pr{\Om} = \inf\set{ t : B_t \supset \R^2 \setminus \Om}$, and $C_4$ will be specified below.
If need be, choose $S_0$ even larger so that
\begin{align}
&S^{\ga^2/2} \ge \max\set{C_2 \log S, C_3 \pr{\log S}^{\eta \pr{S}}} \quad \text{ for all } S \ge S_0,
\label{logLarge} 
\end{align}
where $C_2$ is the constant from Proposition \ref{itProp0}. 
Consequently, for every $S \ge S_0$,
\begin{align}
\inf_{\abs{z_0} = S}\norm{\tilde u}_{L^\iny\pr{B_{\sqrt \la}\pr{z_0}}} \ge \exp\pr{- S^{\al_0}},
\label{step0}
\end{align}
where we have defined $\al_0 = 2 + \frac{\ga^2}{2}$.

We now quantify $\la$ to ensure that Proposition \ref{itProp0} is applicable.
Let $z \in \R^2$ be an arbitrary point for which $\abs{z} \in \brac{S_0/2, 4 R}$.
Define $\check z = \abs{\hat z} \frac{z}{\abs{z}}$ and let $\Ga$ denote the shortest path between $\hat z$ and $\check z$ that stays on the sphere of radius $\abs{\hat z}$.
Recalling that $A\pr{\hat z} = I$, we see that
\begin{align*}
\abs{\tilde a_{ij}\pr{z} - \de_{ij}} 
&\le \abs{\tilde a_{ij}\pr{z} - \tilde a_{ij}\pr{\check z}} + \abs{\tilde a_{ij}\pr{\check z} - \tilde a_{ij}\pr{\hat z}} \\
&\le \int_{\abs{z}}^{\abs{\hat z}} \abs{ \gr \tilde a_{ij}\pr{t \frac{z}{\abs{z}}}} dt  + \sup_{z \in \Ga} \abs{\gr \tilde a_{ij}\pr{z}} \ell\pr{\Ga} \\
&\le  \int_{\abs{z}}^{\abs{\hat z}} \mu_0 t^{-\pr{1 + \eps_0}} dt + \mu_0 \abs{\hat z}^{-\pr{1 + \eps_0}} \pi \abs{\hat z}
\le \mu_0 \pr{\eps_0^{-1} + \pi} \abs{z}^{- \eps_0},
\end{align*}
where we have assumed that $\abs{z} \le \abs{\hat z}$.
If $\abs{z} \ge \abs{\hat z} \ge R$, then an analogous estimate shows that $\abs{\tilde a_{ij}\pr{z} - \de_{ij}}  \le C \abs{\hat z}^{-\eps_0} \le C \abs{z}^{-\eps_0}$ since $\abs{z} \le 4 R$.
Taking $\abs{z} \ge S/2$ for some $S \ge S_0$, it follows from this bound that there is another constant $C_4$ so that over this region, $\la \ge 1- C_4 S^{-\eps_0}$.
As $\eps_0- \ga \ge \frac{\eps_0} 2$, then $S^{\eps_0 -\ga} \ge S_0^{\eps_0/2} \ge 20 C_4$ and we see that $C_4 S^{-\eps_0} \le \frac{S^{- \ga}}{20}$.
Therefore, we are in a position to apply Proposition \ref{itProp0}.

If $\al_0 \le 1 + \ga \le 1 + \eps$, then there is no need to iterate so we define $N = -1$.
Otherwise, we assume that $\al_0 > 1 + \ga$.
With $S_1 = S_0 + S_0^{1 + \ga} - \sqrt \la$ and $\be_0 = \frac{\al_0+ \ga}{1 + \ga} $, Proposition \ref{itProp0} with \eqref{step0} and $\La = \sqrt \la$ implies that
\begin{align*}
\inf_{\abs{z_1} = S_1}\norm{\tilde u}_{L^\iny\pr{B_{\sqrt \la}\pr{z_1}}} 
&\ge \exp\pr{- C_2 S_1^{\be_0} \log S_1}
\ge \exp\pr{- S_1^{\al_1}},
\end{align*}
where in the second inequality we have applied \eqref{logLarge} and defined $\al_1 = \be_0 + \frac{\ga^2}2$.
We iterate this argument by defining $S_{n+1} = S_n + S_n^{1 + \ga} - \sqrt \la$ and $\al_{n+1} = \frac{\al_n + \ga}{1 + \ga} + \frac{\ga^2}{2}$ for $n = 1, \ldots, N$, where $N \in \N$ is defined so that $\al_N > 1 + \ga$ and $\al_{N+1} \le 1 + \ga$.
Repeated applications of Proposition \ref{itProp0} then show that for all $n = 1, \ldots, N+1$,
\begin{align*}
\inf_{\abs{z_n} = S_n}\norm{\tilde u}_{L^\iny\pr{B_{\sqrt \la}\pr{z_n}}} 
&\ge \exp\pr{- S_n^{\al_n}}.
\end{align*}
Whenever $\al_n \ge 1 + \ga$, it can be shown that $\al_{n+1} \le \pr{1 - \frac{\ga^2}2} \al_n$ and it follows that 
$$N \le N_0 := \ceil*{ \log\pr{\frac{1+\ga}{\al_0}}/\log\pr{1 - \frac{\ga^2}2}} - 1.$$
Therefore, the iteration stops after a finite number of steps.

The final step is to undo the change of variables that we introduced at the beginning of the proof.
Recall that $\tilde u \pr{z} = u\pr{Qz}$.
 Since $Qz \in B_{\sqrt \la}\pr{Q z_0}$ implies that $z \in B_1\pr{z_0}$ and $\abs{z_0} \ge \la^{-1/2} S_{N+1}$ implies that $\abs{Q z_0} \ge S_{N+1}$, then for any $S \ge S_{N+1}$,
\begin{align*}
\inf_{\abs{z_0} = \la^{-1/2}S}\norm{u}_{L^\iny\pr{B_{1}\pr{z_0}}} 
&\ge \inf_{\abs{Q z_0} = S} \norm{\tilde u}_{L^\iny\pr{B_{\sqrt \la}\pr{Q z_0}}} 
\ge \exp\pr{- S^{\al_{N+1}}}
\ge \exp\pr{- S^{1 + \eps}},
\end{align*}
where we have used that $\ga \le \eps$.
If we define $R_0 := \la^{-1/2} S_{N+1}$, then the conclusion of the theorem follows.

We now consider the case where $\Om = \R^2$.
Choose $R \ge R_0$, where $R_0$ will be specified below. 
Let $z_0 \in \R^2$ be such that $\abs{z_0} = R $.
Define $d = \si\pr{\frac 4 5}$, $b = \rho\pr{\frac 6 5}$, and $m = b + \frac 1 5$, then set $\tilde u\pr{z} = u\pr{z_0 + d^{-1} R z}$, $\widetilde A\pr{z} = A\pr{z_0 + d^{-1} R z}$, $\widetilde W\pr{z} = d^{-1} R \, W\pr{z_0 + d^{-1} R z}$, and $\widetilde V\pr{z} = \pr{d^{-1}R}^2 V\pr{z_0 + d^{-1} R z}$ so that
\begin{align*}
- \di \pr{\widetilde A\pr{z} \gr \tilde u\pr{z} }
&+ \widetilde W\pr{z} \cdot \gr \tilde u \pr{z} + \widetilde V\pr{z} \tilde u\pr{z} = 0 .
\end{align*}
With $\hat \mu = \max\set{\mu_i : i = 0, 1, 2}$, set $K = \hat \mu d^{-1} R$ and we see that $\norm{\gr \tilde a_{ij}}_{L^\iny\pr{B_m}} \le K$, $\norm{\widetilde V}_{L^\iny\pr{B_m}} \le K^2$, and $\norm{\widetilde W}_{L^\iny\pr{B_m}} \le K$.
Since $\mathcal{L}$ is assumed to be non-negative in $\R^2$, there is a positive supersolution defined throughout $\R^2$, and Lemma \ref{posMul0} implies that there exists a positive multiplier $\phi$ in $B_m$.
Moreover,
\begin{align*}
\norm{\tilde u}_{L^\iny\pr{B_m}}
&\le \sup\set{ \exp\pr{C_0 \abs{z_0 + d^{-1} R z}} : z \in B_m}
\le \exp\pr{C_0 \pr{1 + d^{-1}m} R} \\
\norm{\tilde u}_{L^\iny\pr{B_d}}
&\ge \abs{u\pr{0}}
\ge 1.
\end{align*}
Therefore, assuming that $r$ is sufficiently small, we may apply Theorem \ref{OofV} to $\tilde u$ with $K = \hat \mu d^{-1} R$, $F\pr{K} = \frac 1 5$, $C_1 = C_0 \pr{d+m}/\hat \mu$, $c_1 = 0$, and $p =0$ to get
\begin{align*}
\norm{u}_{L^\iny\pr{B_{rd^{-1}R}\pr{z_0}}} 
= \norm{\tilde u}_{L^\iny\pr{B_{r}}} 
\ge r^{5 C K}
= r^{5 C  \hat \mu d^{-1} R},
\end{align*}
where $C = C\pr{\la,\hat \mu,C_0}$.
Taking $r = d/R$ and further assuming that $R \ge \frac 1 d$ shows that
\begin{align*}
\norm{u}_{L^\iny\pr{B_{1}\pr{z_0}}} 
\ge \exp\pr{- 10 C  \hat \mu d^{-1} R \log R}.
\end{align*}
As $z_0$ was an arbitrary point for which $\abs{z_0} = R$, the conclusion follows.
\end{proof}

\section{The proof of Theorem \ref{LandisThm}}
\label{Landis}

The idea behind the proof of Theorem \ref{LandisThm} is very similar to that of Theorem \ref{LandisThm0}, but because the set of assumptions is different, the execution of the proof also differs.
As in the previous section, we begin with an iteration proposition.

\begin{prop}
\label{itProp}
Let the coefficient matrix $A$ be symmetric and satisfy \eqref{ellip}, \eqref{Abd}, and \eqref{Lips}.
Assume that $V : \R^2 \to \R$ satisfies \eqref{V+Cond} and \eqref{V-Cond} and $W : \R^2 \to \R^2$ satisfies \eqref{WCond}.
Let $u: \R^2 \to \R$ be a solution to \eqref{ePDE} for which \eqref{uBd} holds.
Suppose that for any $\ga \in\pr{0, \min\set{\eps_0, \eps_1, \eps_2}}$ and any $S \ge \tilde S\pr{\mu_0, \mu_1, \mu_2, \eps_0, \eps_1, \eps_2, C_0, c_0, \ga, \La}$, there exists an $\al \in \pb{1, 2}$ so that \eqref{stepnEst} holds.
Assume further that when we restrict to $z \in \R^2$ with $\abs{z} \in\brac{ \frac S 2,  4 S^{1 + \ga}}$, \eqref{ellip} and \eqref{Abd} hold with $\la \ge 1 - \frac {S^{-\ga}} {20}$.
Then for any $R \ge S + S^{1 + \ga} - \La$, \eqref{stepn1Est} holds where $C_2$ is a universal constant and $\be = \max\set{\frac \al {1 + \ga}, 1} + \frac \ga {1 + \ga}$.
\end{prop}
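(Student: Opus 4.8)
The plan is to mirror the proof of Proposition~\ref{itProp0} almost verbatim, replacing only the step in which a positive multiplier is produced. First I would fix $\ga \in (0, \min\{\eps_0,\eps_1,\eps_2\})$ and $S \ge \tilde S$, set $T = S^{1+\ga}$, pick any $z_1$ with $\abs{z_1} = R = S + T - \La$, put $z_0 = S z_1/\abs{z_1}$, and rescale by the factor $aT$ with $a = (1-\tfrac{S}{5T})^{-1}$, defining $\tilde u(z) = u(z_1 + aTz)$, $\widetilde A$, $\widetilde W$, $\widetilde V$ and the radii $d = \si(1-\tfrac{S}{20T})$, $b = \rho(1+\tfrac{S}{20T})$, $m = b + \tfrac{S}{20T}$ as in \eqref{dDef}--\eqref{mDef}. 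The inequalities $aTm \le T + \tfrac S2 - \La$ and $aTd \ge T$, together with the containments $B_\La(z_0) \su B_{aTd}(z_1)$ and $B_{aTm}(z_1) \su B_{2T + 3S/2}(0)$, are obtained word for word as in Proposition~\ref{itProp0}, so that \eqref{uBd} gives $\norm{\tilde u}_{L^\iny(B_m)} \le C_0(4T)^{c_0} \le \exp(T)$ and \eqref{stepnEst} (which supplies $\al \in \pb{1,2}$) gives $\norm{\tilde u}_{L^\iny(B_d)} \ge \exp(-S^\al)$.

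The genuinely new ingredient is the existence of a positive multiplier $\phi$ on $B_m$. On $B_{aTm}(z_1)$ every point has modulus at least $S/2$, so after the $aT$-rescaling \eqref{Lips}, \eqref{V-Cond} and \eqref{WCond} yield $\norm{\gr \tilde a_{ij}}_{L^\iny(B_m)} \lesssim \mu_0 S^{\ga - \eps_0}$, $\norm{\widetilde V_-}_{L^\iny(B_m)} \lesssim \mu_1^2 S^{2(\ga-\eps_1)}$ and $\norm{\widetilde W}_{L^\iny(B_m)} \lesssim \mu_2 S^{\ga-\eps_2}$, each of which tends to $0$ as $S \to \iny$ precisely because $\ga < \min\{\eps_0,\eps_1,\eps_2\}$, while $\norm{\widetilde V_+}_{L^\iny(B_m)} \le (aT)^2$ by \eqref{V+Cond}. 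I would first apply Lemma~\ref{rsClose} (with $\de$ the first of these small quantities) to obtain a bound $m \le M(\la)$ independent of $S$, and then enlarge $\tilde S$ so that these three quantities lie below $\tfrac{\la}{4m}$, $\tfrac{\la}{m^2+1}$ and $\tfrac{\la}{2m}$ respectively; Lemma~\ref{posMul} then furnishes $\phi$. The same smallness simultaneously forces $b \le 1 + \tfrac{S}{5T}$ and $d \ge 1 - \tfrac S{5T}$, which is what makes the radius inequalities stated above valid.

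With $\phi$ available, the rest is a transcription of the argument for Proposition~\ref{itProp0}: take $a_0 = \tfrac 54 \max\{1,\mu_1,\mu_2\}$, $K = a_0 T$, $F(K) = \tfrac{S}{20T} = \tfrac1{20}(K/a_0)^{-\ga/(1+\ga)}$, $C_1 = 1/a_0$, $p = \tfrac{\al}{1+\ga} \le 2$, and $q = \max\{1,p\}$; then $\norm{\tilde u}_{L^\iny(B_m)} \le \exp(T)$, $\norm{\tilde u}_{L^\iny(B_d)} \ge \exp(-S^\al)$, $\norm{\gr\tilde a_{ij}}_{L^\iny(B_m)}, \norm{\widetilde W}_{L^\iny(B_m)} \le K$ and $\norm{\widetilde V}_{L^\iny(B_m)} \le K^2$, so Theorem~\ref{OofV} gives, for $r$ small, $\norm{u}_{L^\iny(B_{aTr}(z_1))} = \norm{\tilde u}_{L^\iny(B_r)} \ge r^{\,20\tilde C\, T^{q+\ga/(1+\ga)}}$ with $\tilde C = \tilde C(\la,a_0)$. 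Choosing $r = \tfrac{4\La}{5T}$ (so that $aTr \le \La$, since $a \le \tfrac 54$), bounding $\log\tfrac{5T}{4\La} \le 2\log T$ for $S$ large, and using $T = S^{1+\ga} \le R$ with $\be \ge 0$ then yields \eqref{stepn1Est} with $\be = \max\{\tfrac{\al}{1+\ga},1\} + \tfrac{\ga}{1+\ga}$ and a universal $C_2 = 21\tilde C$.

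The hard part is the multiplier step: it is the only place the decay hypotheses \eqref{V-Cond} and \eqref{WCond} and the constraint $\ga < \min\{\eps_0,\eps_1,\eps_2\}$ enter, and it needs care because the thresholds in Lemma~\ref{posMul} depend on $m$, so the bound on $m$ (from Lemma~\ref{rsClose}) and the smallness of the rescaled coefficients must be arranged together. Everything else is routine, the only numerical change from Proposition~\ref{itProp0} being that $\al$ now ranges over $\pb{1,2}$ rather than $\pb{1,3}$, which merely narrows the admissible range of $p$ and $q$.
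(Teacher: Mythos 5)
Your proof follows the paper's argument almost exactly, and the structure is correct: the new ingredient is indeed the existence of the positive multiplier via Lemma~\ref{posMul}, obtained by combining the decay hypotheses \eqref{V-Cond}, \eqref{WCond} with $\ga < \min\{\eps_0,\eps_1,\eps_2\}$ and the bound on $m$ from Lemma~\ref{rsClose}. The remaining rescaling and application of Theorem~\ref{OofV} track the paper step for step.

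The one point where you diverge is the choice of $K$, and it matters for the stated conclusion. You import $K = a_0 T$ with $a_0 = \tfrac 54 \max\{1,\mu_1,\mu_2\}$ directly from Proposition~\ref{itProp0}, so the constant $\tilde C = \tilde C(\la, a_0)$ — and hence your $C_2 = 21\tilde C$ — depends on $\mu_1, \mu_2$. But the proposition asserts that $C_2$ is a \emph{universal} constant, and this is precisely the improvement over Proposition~\ref{itProp0}. The stronger hypotheses here make the stronger conclusion available: after rescaling, $\norm{\widetilde V_+}_{L^\iny(B_m)} \le (aT)^2$ uses only $\norm{V_+}_{L^\iny} \le 1$ (no $\mu_1$), while $\norm{\widetilde V_-}$, $\norm{\widetilde W}$, and $\norm{\gr\tilde a_{ij}}$ all tend to zero as $S \to \iny$ and are thus eventually dominated by any fixed $K \ge 1$. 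So one should take $K = \tfrac 54 T$ (since $a \le \tfrac 54$), with $C_1 = \tfrac 45$ and $c_1 = \bigl(\tfrac 45\bigr)^2$ independent of $\mu_1, \mu_2$; Theorem~\ref{OofV} then gives $C = C(\la)$, and the added hypothesis $\la \ge 1 - \tfrac{S^{-\ga}}{20}$ makes this a universal constant. With that correction, your proof matches the paper's.
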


\begin{proof}
Fix $\ga \in \pr{0, \min\set{\eps_0, \eps_1, \eps_2}}$ and $S \ge \tilde S$.
Define $T = S^{1 + \ga}$.
Let $z_1 \in \R^2$ be such that $\abs{z_1} = S + T - \La= R$ and define $z_0 = S \frac{z_1}{\abs{z_1}}$.

With $a = \pr{1 - \frac{S}{5T}}^{-1}$, define $\tilde u\pr{z} = u\pr{z_1 + aT z}$, $\widetilde A\pr{z} = A\pr{z_1 + aT z}$, $\widetilde W\pr{z} = aT W\pr{z_1 + aT z}$, and $\widetilde V\pr{z} = \pr{aT}^2 V\pr{z_1 + aT z}$ so that
\begin{align*}
- \di \pr{\widetilde A\pr{z} \gr \tilde u\pr{z} }
&+ \widetilde W\pr{z} \cdot \gr \tilde u \pr{z} + \widetilde V\pr{z} \tilde u\pr{z} = 0 .
\end{align*}
Define $d = \si\pr{1 - \frac S {20 T}}$, $b = \rho\pr{1 + \frac S {20 T}}$, and $m  = b + \frac {S}{20 T}$. 
As in the proof of Proposition \ref{itProp0}, if we choose $S$ sufficiently large with respect to $\mu_0$ and $\ga - \eps_0$, then $d \ge 1 - \frac S{5T}$, $b \le 1 + \frac{S}{5T}$, and $a T m \le T + \frac{S}{2}- \La$.
Then \eqref{grtA} holds and conditions \eqref{V+Cond}, \eqref{V-Cond} and \eqref{WCond} imply that 
\begin{align*}
&\norm{\widetilde V_+}_{L^\iny\pr{B_m}} \le \pr{a T}^2 \\
&\norm{\widetilde V_-}_{L^\iny\pr{B_m}} \le \pr{\mu_1 aT}^2 \pr{S/2}^{-2\pr{1 + \eps_1}} \\
&\norm{\widetilde W}_{L^\iny\pr{B_m}} \le \mu_2 aT \pr{S/2}^{-\pr{1 + \eps_2}}.
\end{align*}
For Theorem \ref{OofV} to be applicable, we require a positive multiplier in $B_m$.
An application of Lemma \ref{posMul} will produce such a function, but we must have $\norm{\gr \widetilde a_{ij}}_{L^\iny\pr{B_m}} \le \frac {\la} {4m}$, $\norm{\widetilde V_-}_{L^\iny\pr{B_m}} \le \frac {\la} {m^2 + 1}$, and $\norm{\widetilde W}_{L^\iny\pr{B_m}} \le \frac {\la} {2m}$.
The bounds on $\la$ and $m$ imply there are universal constants $c_0$, $c_1$, and $c_2$ for which $c_0 \le \frac {\la} {4m}$, $c_1^2 \le \frac {\la} {m^2 + 1}$ and $c_2 \le  \frac {\la} {2m}$.
Combining these conditions with the bounds observed above, we require
\begin{align*}
T = S^{1 + \ga} &\le \min\set{
\frac{c_0 2^{1 - \eps_0} }{5 \mu_0} S^{1 + \eps_0},
\frac{c_1 2^{1 - \eps_1}}{5 \mu_1} S^{1 + \eps_1},
\frac{c_2 2^{1 - \eps_2}}{5 \mu_2 }S^{1 + \eps_2} }.
\end{align*}
Since $\ga < \min\set{\eps_0, \eps_1, \eps_2}$, if $S$ is sufficiently large with respect to $\mu_0$, $\mu_1$, $\mu_2$, $\eps_0$, $\eps_1$, $\eps_2$, $c_0$, $c_1$, $c_2$, and $\ga$, then this minimality condition will be satisfied, and we conclude that the required positive multiplier exists in $B_m$.

As in the proof of Proposition \ref{itProp0}, if $S$ is sufficiently large with respect to $C_0$ and $c_0$, then \eqref{tuUpper} holds and \eqref{tuLower} follows from assumption \eqref{stepnEst}.
In particular, all of the hypotheses of Theorem \ref{OofV} hold with $K = \frac{5} 4 T$, $F\pr{K} = \frac S {20 T} = \frac{1}{20 } \pr{\frac {4K}{5}}^{-\frac{\ga}{1 + \ga}}$, $C_1 = \frac 4 5$, $c_1 = \pr{\frac 4 {5}}^2$, and $p = \frac \al {1 + \ga} \le 2$.
That is, with $q = \max\set{1, p}$ and a universal constant $C$, we have
\begin{align*}
\norm{u}_{B_{a T r}\pr{z_1}} 
\ge \norm{\tilde u}_{B_r\pr{z_1}} 
\ge r^{C K^q/ F\pr{K}}
\ge r^{20 \tilde C T^{q + \frac{\ga}{1 + \ga}}},
\end{align*}
where $\tilde C = C \pr{\frac{5}4}^2$.
Setting $r = \frac {4\La}{5 T}$ shows that
\begin{align*}
\norm{u}_{B_{\La}\pr{z_1}} 
&\ge \exp\brac{-21 \tilde C T^{\be} \log T },
\end{align*}
where we have assumed that $\log T \ge 20 \log\pr{5/4\La}$.
Since $z_1 \in \R^2$ with $\abs{z_1} = R$ was arbitrary, the conclusion of the proposition follows.
\end{proof}

Now we repeatedly apply Proposition \ref{itProp} to prove Theorem \ref{LandisThm}.
Much of this proof resembles that of Theorem \ref{LandisThm0}, so we often refer to that proof.

\begin{proof}[The proof of Theorem \ref{LandisThm}]
Let $\eps > 0$ be given. 
Fix $R \ge R_0$, where $R_0$ will be specified below.

As in the proof of Theorem \ref{LandisThm0}, we may assume that $A$ is symmetric and a change of variables shows that there exists $\hat z \in \R^2$ with $\abs{\hat z} \ge R$ for which $\widetilde A\pr{\hat z} = I$.
Condition \eqref{Lips} implies that $\widetilde W - W_n$ in \eqref{ndForm} satisfies \eqref{WCond} with $\eps_2$ replaced by $\min\set{\eps_2, \eps_0}$ and $\mu_2$ replaced by $\mu_2 + \mu_0$.
An application of Theorem 1.1 from \cite{LW14} implies that there exists $\hat S_0 = \hat S_0\pr{C_0,\mu_0,\mu_1,\mu_2,\eps_0}$ so that whenever $S \ge \hat S_0$, it holds that
\begin{align*}
\inf_{\abs{z_0} = S}\norm{\tilde u}_{L^\iny\pr{B_{\sqrt \la}\pr{z_0}}} \ge \exp\pr{- C_3 S^{4/3} \pr{\log S}^{\eta \pr{S}}},
\end{align*}
where $\disp \eta\pr{S} = C' \pr{\log S} \pr{\log \log \log S} \pr{\log \log S}^{-2}$ and the constants $C_3$ and $C'$ both depend on $c_0$, $C_0$, $\mu_0$, $\mu_1$, $\mu_2$, $\eps_0$, and $\eps_2$.

Let $\ga := \min\set{\eps, \frac {\eps_0}{2}, \frac{\eps_1}{2}, \frac{\eps_2}{2}}$ and choose 
$$S_0 \ge \max\set{ \tilde S\pr{\mu_0, \mu_1, \mu_2, \eps_0, \eps_1, \eps_2, C_0, c_0, \ga, \sqrt \la}, \hat S_0, \pr{20 C_4}^{\frac 2{\eps_0}}},$$ 
where $\tilde S$ is as given in Proposition \ref{itProp} and $C_4$ is the specific constant from the proof of Theorem \ref{LandisThm0}.
If need be, choose $S_0$ even larger so that \eqref{logLarge} holds with $C_2$ from Proposition \ref{itProp} and the $C_3$ that we just introduced. 
Consequently, for every $S \ge S_0$, \eqref{step0} holds with $\al_0 = \frac 4 3 + \frac{\ga^2}{2}$.

As in the proof of Theorem \ref{LandisThm0}, condition \eqref{Lips}, the largeness of $S_0$, and  $\widetilde A\pr{\hat z} = I$ imply that $\la \ge 1 - \frac{S^{- \ga}}{20}$ when $\abs{z} \ge \frac S 2$.
Therefore, we are in a position to apply Proposition \ref{itProp}.

If $\al_0 \le 1 + \ga \le 1 + \eps$, then there is no need to iterate so we define $N = -1$.
Otherwise, we assume that $\al_0 > 1 + \ga$.
Define $S_{n+1} = S_n + S_n^{1 + \ga} - \sqrt \la$ and $\al_{n+1} = \frac{\al_n + \ga}{1 + \ga} + \frac{\ga^2}{2}$ for $n = 0, 1, \ldots, N$, where $N \in \N$ is defined so that $\al_N > 1 + \ga$, while $\al_{N+1} \le 1 + \ga$.
As in the proof of Theorem \ref{LandisThm0}, \eqref{step0} in combination with repeated applications of Proposition \ref{itProp} shows that for all $n = 1, \ldots, N+1$,
\begin{align*}
\inf_{\abs{z_n} = S_n}\norm{\tilde u}_{L^\iny\pr{B_{\sqrt \la}\pr{z_n}}} 
&\ge \exp\pr{- S_n^{\al_n}}.
\end{align*}
As argued previously, the iteration stops after a finite number of steps.
Reversing the change of variables, we see that for any $S \ge S_{N+1}$,
\begin{align*}
\inf_{\abs{z_0} = \la^{-1/2}S}\norm{u}_{L^\iny\pr{B_{1}\pr{z_0}}} 
&\ge \inf_{\abs{Q z_0} = S} \norm{\tilde u}_{L^\iny\pr{B_{\sqrt \la}\pr{Q z_0}}} 
\ge \exp\pr{- S^{\al_{N+1}}}
\ge \exp\pr{- S^{1 + \eps}},
\end{align*}
where we have used that $\ga \le \eps$.
If we define $R_0 := \la^{-1/2} S_{N+1}$, then the conclusion of the theorem follows.
\end{proof}

\def\cprime{$'$}


\end{document}